\title{Ends of Schreier graphs of hyperbolic groups}
\author{Audrey Vonseel}
\address{Universit\'{e} de Strasbourg, CNRS, IRMA UMR 7501, F-67000 Strasbourg, France}
\email{vonseel@math.unistra.fr}
\urladdr{}
\newtheorem{thm}{Theorem}[section]    
\newtheorem{lem}[thm]{Lemma}          
\newtheorem{prop}[thm]{Proposition}		
\newtheorem{cor}[thm]{Corollary}		
\theoremstyle{definition}
\newtheorem{defn}[thm]{Definition}    
\newtheorem{remark}[thm]{Remark}  		
\newcommand{\xbar}{\overline{x}}
\newcommand{\ybar}{\overline{y}}
\newcommand{\zbar}{\overline{z}}
\newcommand{\xzbar}{\overline{x_0}}
\newcommand{\cbxz}[1]{\bar{B}\left(\xzbar, #1\right)}
\newcommand{\obxz}[1]{B\left(\xzbar, #1\right)}
\newcommand{\xh}{X/H}
\newcommand{\gh}{G/H}
\newcommand{\caygs}{\Gamma_{\mathcal{S}}(G)}
\newcommand{\schghs}{\Gamma_{\mathcal{S}}(G,H)}
\newcommand{\coco}{C(\Lambda H)/H}
\newcommand{\dx}{\delta_{\scriptscriptstyle X}}
\newcommand{\dxh}{\delta_{\scriptscriptstyle X/H}}
\newcommand{\sdw}[1]{\mathscr{S}\left( #1 \right)}
\newcommand{\sph}[1]{S(\xzbar ,#1)}
\newcommand{\sphx}[1]{S(x_0,#1)}
\newcommand{\proj}[1]{\pi_{\scriptscriptstyle #1}}
\newcommand{\sdwxh}{\mathscr{S}_{\scriptscriptstyle X/H}}
\newcommand{\wch}{C(\Lambda H)}
\def\co{\colon\thinspace}
\begin{document}

\begin{abstract}    

We study the number of ends of a Schreier graph of a hyperbolic group. Let $G$ be a hyperbolic group and let $H$ be a subgroup of $G$. In general, there is no algorithm to compute the number of ends of a Schreier graph of the pair $(G,H)$. However, assuming that $H$ is a quasi-convex subgroup of $G$, we construct an algorithm. 
%
\end{abstract}

\maketitle


\section{Introduction}

The theory of ends has been introduced by H.Freundenthal for topological spaces \cite{Freudenthal31}. The ends are often depicted as the connected components of some boundary of the space where each end indicates a distinct way to move to infinity within the space. Later, H.Freudenthal extended his concept to finitely generated groups \cite{Freudenthal45}. This theory has been well studied in the middle of the last century.

For instance, H.Hopf \cite{Hopf44} showed that finitely generated groups have $0$, $1$, $2$ or infinitely many ends and later, J.Stallings \cite{Stallings71} proved that finitely generated groups have more than one end if and only if they split either as a free product with amalgamation or as a HNN extension over a finite group. 

Meanwhile, A.Borel adds a new perscepive to the theory of ends by studying ends of a group relatively to a subgroup \cite{Borel53}. If $G$ is a finitely generated group and $H$ a subgroup, the \emph{number of relative ends} of the pair $(G,H)$ is the number of ends of a corresponding Schreier graph, that is the quotient of a Cayley graph of $G$ under the action of $H$. Later, C.Houghton \cite{Houghton74} and then P.Scott \cite{Scott77} pursued this work.

The theory of ends remains of great interest. In 1999, V.Gerasimov studies connectedness of the boundary of hyperbolic groups with an algorithmic approach and obtains the following theorem: 
\begin{thm} \cite{Gerasimov99}
There is an algorithm that, given a finite presentation of a hyperbolic group, computes the number of ends of this group. \qed
\end{thm}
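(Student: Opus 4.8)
\medskip
\noindent We briefly indicate how such an algorithm may be constructed. The plan is to reduce the problem, via Hopf's theorem, to deciding which of the cases $0$, $1$, $2$, $\infty$ occurs, and to show that each case is visible in a large but computable ball of a Cayley graph. First I would run, in parallel with everything else, the standard partial algorithm that searches for a Dehn presentation of the given presentation $\langle\mathcal{S}\mid\mathcal{R}\rangle$ (equivalently, that verifies a linear isoperimetric inequality); since $G$ is assumed hyperbolic this halts and outputs a hyperbolicity constant $\delta$ for the Cayley graph $\caygs$, and from a Dehn presentation the word problem is solvable, so the finite balls $\bar B(e,r)\subseteq\caygs$, the spheres $S(e,r)$, and the adjacency among their vertices become explicitly computable. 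Next, using that a finite subgroup — in particular $G$ itself, if finite — of a $\delta$-hyperbolic group has order bounded by a computable function of $\delta$ and $|\mathcal{S}|$, I would fix a computable $D_0=D_0(\delta,|\mathcal{S}|)$ and test whether $S(e,D_0+1)=\emptyset$; if so, output ``$0$ ends'' (and $G=\bar B(e,D_0)$), otherwise $G$ is infinite and we proceed. The remainder of the computation revolves around
\[
N(r)=\#\{\text{unbounded connected components of }\caygs\setminus\bar B(e,r)\},
\]
which is non-decreasing in $r$, whose limit is the number of ends of $G$, and which satisfies $N(r)\ge 1$ for all $r$ since $G$ is infinite.

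The next task is to make $N(r)$ effectively computable for a given $r$. Since every component of $\caygs\setminus\bar B(e,r)$ meets $S(e,r+1)$, I would first compute the partition of the finite set $S(e,r+1)$ into the classes ``lie in the same component of $\caygs\setminus\bar B(e,r)$''. This partition is the stable value of the (coarsening) partitions ``same component of $\bar B(e,R)\setminus\bar B(e,r)$'' as $R\to\infty$, and the point is that, by the uniform quasiconvexity of complements of balls in a $\delta$-hyperbolic space, it is already reached at $R=r+c_1$ with $c_1=c_1(\delta,|\mathcal{S}|)$ computable. Second, among these finitely many components I would decide which are unbounded: a component $C$ is unbounded if and only if it contains a vertex at distance $\ge r+1+c_2$ from $e$, with $c_2=c_2(\delta,|\mathcal{S}|)$ computable. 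One direction is trivial; for the other, every vertex of an infinite hyperbolic group lies within a computable distance $c_0$ of a bi-infinite geodesic $\gamma$ (translate the axis of a loxodromic element of bounded length, or use the quasi-line in the two-ended case), and thinness of the ideal triangle with vertices $e$ and the two endpoints of $\gamma$ pushes a point of $\gamma$ near $C$ onto a geodesic ray issuing from $e$; past that point the ray stays outside $\bar B(e,r)$ and is joined to $C$ inside the complement, so $C$ is unbounded. Hence $N(r)$ is computed by inspecting $\caygs$ only out to radius $r+1+\max(c_1,c_2)$.

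It remains to bound the radius at which $N$ attains its limit when that limit is finite. Here the plan is: if $G$ has at least two ends then, by Stallings' theorem, $G$ splits over a finite subgroup $C$, and $|C|$ is at most a computable bound $M=M(\delta,|\mathcal{S}|)$; looking at the Bass--Serre tree — which is a line when $G$ has two ends, and is bushy, so has a vertex of valence $\ge 3$, when $G$ has infinitely many ends — one extracts one edge, respectively three edges around such a vertex, whose preimage in $\caygs$ is a finite union of cosets of $C$, and after translating this union into a ball $\bar B(e,R^{\ast})$ with $R^{\ast}=R^{\ast}(\delta,|\mathcal{S}|)$ computable one checks that $\caygs\setminus\bar B(e,R^{\ast})$ already has at least $2$, respectively at least $3$, unbounded components. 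So the algorithm is: compute $N(r)$ for $r=0,1,\dots,R^{\ast}$; if any value is $\ge 3$, output ``infinitely many ends''; otherwise output $N(R^{\ast})\in\{1,2\}$ (one end, two ends respectively). Correctness follows from Hopf's theorem, the monotonicity of $N$, the fact that $N(r)\ge1$ throughout, and the visibility statement just described.

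The main obstacle is exactly this package of effective geometric lemmas and the explicit values of the constants $D_0$, $c_1$, $c_2$, $R^{\ast}$: the partial verification of hyperbolicity that produces $\delta$; the uniform quasiconvexity of complements of balls, with an explicit constant, which is what makes the component relation on a sphere decidable; and — the heart of the matter — the passage from the asymptotic statement ``$\partial G$ is disconnected'' to the effective statement ``$\caygs$ already separates, near $e$, at a scale computable from $\delta$ and $|\mathcal{S}|$'', which rests on Stallings' theorem together with the computable bound on the orders of finite subgroups of a hyperbolic group and on the geometric control of splittings with finite edge groups. An alternative for this last step is to invoke directly an algorithm that, given a hyperbolic group, decides whether it splits nontrivially over a finite subgroup and whether the resulting tree is a line; this packages the same geometric input differently.
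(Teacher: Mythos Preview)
The paper does not give a proof of this statement: it is quoted in the introduction as a known theorem, attributed to Gerasimov with the reference \cite{Gerasimov99}, and closed immediately with \qed. There is therefore no ``paper's proof'' to compare your proposal against; the theorem is used only as motivation for the relative question treated later.

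As a sketch toward an independent proof, your outline has the right large-scale shape (compute $\delta$, solve the word problem, test finiteness, then separate the cases $1$, $2$, $\infty$ via Stallings and the computable bound on orders of finite subgroups), and you correctly flag in your last paragraph that the effective geometric lemmas are the genuine content. One point to be careful about: the phrase ``uniform quasiconvexity of complements of balls in a $\delta$-hyperbolic space'' is not true as stated---already in $\mathbb{H}^2$ the complement of a metric ball is connected, yet geodesics between points on opposite sides of the ball pass through the center, so these complements are not quasiconvex at all. What you actually need for your constant $c_1$ is not quasiconvexity but a quantitative statement that two sphere points in the same complementary component are joined by a path of controlled diameter avoiding the ball; in the one-ended case this is exactly the Bestvina--Mess condition $(\ddag)$, which requires the Bowditch--Swarup no-cut-point theorem, and in the multi-ended case it comes from the Bass--Serre tree picture you invoke later. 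Also, the bound $c_1$ you obtain this way will in general depend on $r$ (roughly like $|\mathcal{S}|^{r}$), not only on $\delta$ and $|\mathcal{S}|$; this is harmless for decidability since you only need it at the single computable radius $R^{\ast}$, but the sentence as written overclaims.
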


The question of an analogous result for relative ends of a pair of groups arise naturally. Surprisingly, the answer isn't straightforward. Indeed, we prove the following result: 
\begin{thm} \label{thmintro1}
There exist pairs of groups $(G,H)$, where $G$ is a hyperbolic group, for which it is impossible to decide algorithmically if the pair has 0, 2 or infinitely many relative ends. 
\end{thm}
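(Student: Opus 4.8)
The plan is to reduce the computation of relative ends of a hyperbolic group to a classical undecidability phenomenon for arbitrary finitely presented groups, using Rips' construction as the bridge.

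\textbf{Step 1: Rips' construction, read algorithmically.} Rips' construction is effective: from a finite presentation $\mathcal{P}=\langle S\mid R\rangle$ of a group $Q$ one writes down explicitly a finite presentation of a $C'(1/6)$ small cancellation group $G=G(\mathcal{P})$ --- in particular a hyperbolic group --- together with two words $n_1,n_2$ in the generators of $G$ so that, setting $N=\langle n_1,n_2\rangle$, there is a short exact sequence $1\to N\to G\to Q\to 1$. Thus from $\mathcal{P}$ we produce effectively the datum of the pair $(G,N)$, namely a finite presentation of $G$ together with a finite generating set of $N$.

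\textbf{Step 2: Relative ends of $(G,N)$ equal the ends of $Q$.} Since $N$ is normal in $G$, for every finite generating set $\mathcal{S}$ of $G$ the Schreier graph $\Gamma_{\mathcal{S}}(G,N)$ is canonically the Cayley graph of the quotient $Q=G/N$ with respect to the image of $\mathcal{S}$; as the number of ends of a connected locally finite graph is a quasi-isometry invariant, the number of relative ends of $(G,N)$ equals the number of ends of the finitely generated group $Q$. By the theorems of Hopf and Stallings quoted above, this number is $0$ exactly when $Q$ is finite, is $2$ exactly when $Q$ is two-ended, and is infinite exactly when $Q$ splits over a finite subgroup without being two-ended.

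\textbf{Step 3: Feeding in an undecidable trichotomy.} It remains to choose the inputs $Q$ so that the three values $0$, $2$, $\infty$ all occur and cannot be distinguished algorithmically. Here I would invoke the Adian--Rabin construction, resting on the Novikov--Boone theorem: for any fixed finitely presented group $A$ there is a non-recursive set $K$ of natural numbers together with a computable map $k\mapsto\mathcal{P}_k$ such that $\mathcal{P}_k$ presents the trivial group when $k\notin K$ and presents $A$ when $k\in K$. Running this once with $A$ the infinite cyclic group and once with $A$ a free group of rank two, and taking the union of the two families, yields a computable family of presentations $(\mathcal{P}_k)_k$ whose associated groups $Q_k$ realize all of $0$, $2$ and $\infty$ ends, while ``which one'' amounts to deciding membership in a non-recursive set. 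Composing with Steps 1--2 gives pairs $(G_k,N_k)$, each $G_k$ hyperbolic, whose number of relative ends lies in $\{0,2,\infty\}$ and cannot be computed; in particular no algorithm can decide, among these three possibilities, which occurs. One checks in passing that $N_k$ is quasi-convex in $G_k$ only when $Q_k$ is finite, consistently with --- and indeed explaining the need for --- the quasi-convexity hypothesis in the companion positive result.

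\textbf{Where the difficulty sits.} Steps 1 and 2 are formal: Rips' construction is explicit, and the ends count is transported along a genuine equality of graphs followed by quasi-isometry invariance. The substantive part is Step 3 --- upgrading the bare undecidability of triviality (or finiteness) into a controlled trichotomy realizing precisely the end values $0$, $2$, $\infty$ via a uniformly computable choice of Rips inputs, and being careful that the ``nontrivial'' alternative produced by the Adian--Rabin machinery really is the prescribed group $A$ and not merely some group mapping onto it.
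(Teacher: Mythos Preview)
Your Steps~1 and~2 are correct and match the paper exactly: Rips' construction is effective, $N$ is normal in $G$, so the Schreier graph of $(G,N)$ is a Cayley graph of $Q=G/N$ and the relative ends of $(G,N)$ equal the ends of $Q$. The gap is in Step~3, and it is not merely a missing justification but an actual error. For your chosen targets $A=\mathbb{Z}$ and $A=F_2$ the family you postulate cannot exist: the abelianisation of a finite presentation is computable (exponent--sum matrix and Smith normal form) and distinguishes the trivial group (abelianisation $0$) from $\mathbb{Z}$ or $F_2$ (abelianisation $\mathbb{Z}$, resp.\ $\mathbb{Z}^2$). Hence in any computable family whose members each present either $1$ or $A$, triviality is \emph{decidable}, and your set $K$ would be recursive after all. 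No change of two-ended target rescues the first family, since every two-ended group surjects onto $\mathbb{Z}$ or $D_\infty$ and so has nontrivial abelianisation. Incidentally, the textbook Adian--Rabin construction never outputs ``$1$ or exactly $A$'': with $G_+=1$ it returns, in the nontrivial alternative, a group containing $G_-*U$ that varies with the word $w$ --- which is precisely the difficulty you flagged but did not resolve.

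The paper's route through Step~3 is different and avoids this obstruction. It applies Rips to the single uniform family $Q=A*\mathbb{Z}_2$, with $A$ ranging over \emph{all} finite presentations, and proves a short lemma (via Stallings and the Scott--Wall description of two-ended splittings) that $e(Q)=0,2,\infty$ according as $|A|=1,\,2,\,{>}2$. Each of ``$A$ trivial'', ``$|A|=2$'', ``$|A|\le 2$'' is a textbook Markov property (witnesses drawn from $1,\mathbb{Z}_2,\mathbb{Z}_5$), so the \emph{standard} Adian--Rabin theorem applies directly, no sharpening needed. The trichotomy on ends of $Q$ is thus carried by a cardinality trichotomy on the input $A$, rather than by a hand-built dichotomy on outputs, and the abelianisation obstruction never arises because the family $\{A*\mathbb{Z}_2\}$ is not restricted to two isomorphism types.
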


Let us take a look at one of the few significant examples that can be clearly outlined. Consider a closed oriented surface $S$ of genus $g \geq 2$ endowed with a hyperbolic metric and denote by $G = \pi_1(S)$ the surface group.
Consider also a subsurface $\Sigma$ of $S$ with totally geodesic boundary.
The universal cover of the surface $S$ with base-point in $\Sigma$ is  $\mathbb{H}^2$. In this space, the connected component of the universal cover containing a lift of this base-point is a convex polygon $\mathcal{C}$ having infinitely many sides (see Figure \ref{fexrevetementuniversel}). If $H$ denotes the fundamental group of $\Sigma$, the quotient $\mathcal{C} / H$ is isomorphic to $\Sigma$.

\begin{figure}[ht!]
\labellist
\pinlabel $\mathcal{C}$ at 157 150
\endlabellist
\centering
\includegraphics[scale=0.3]{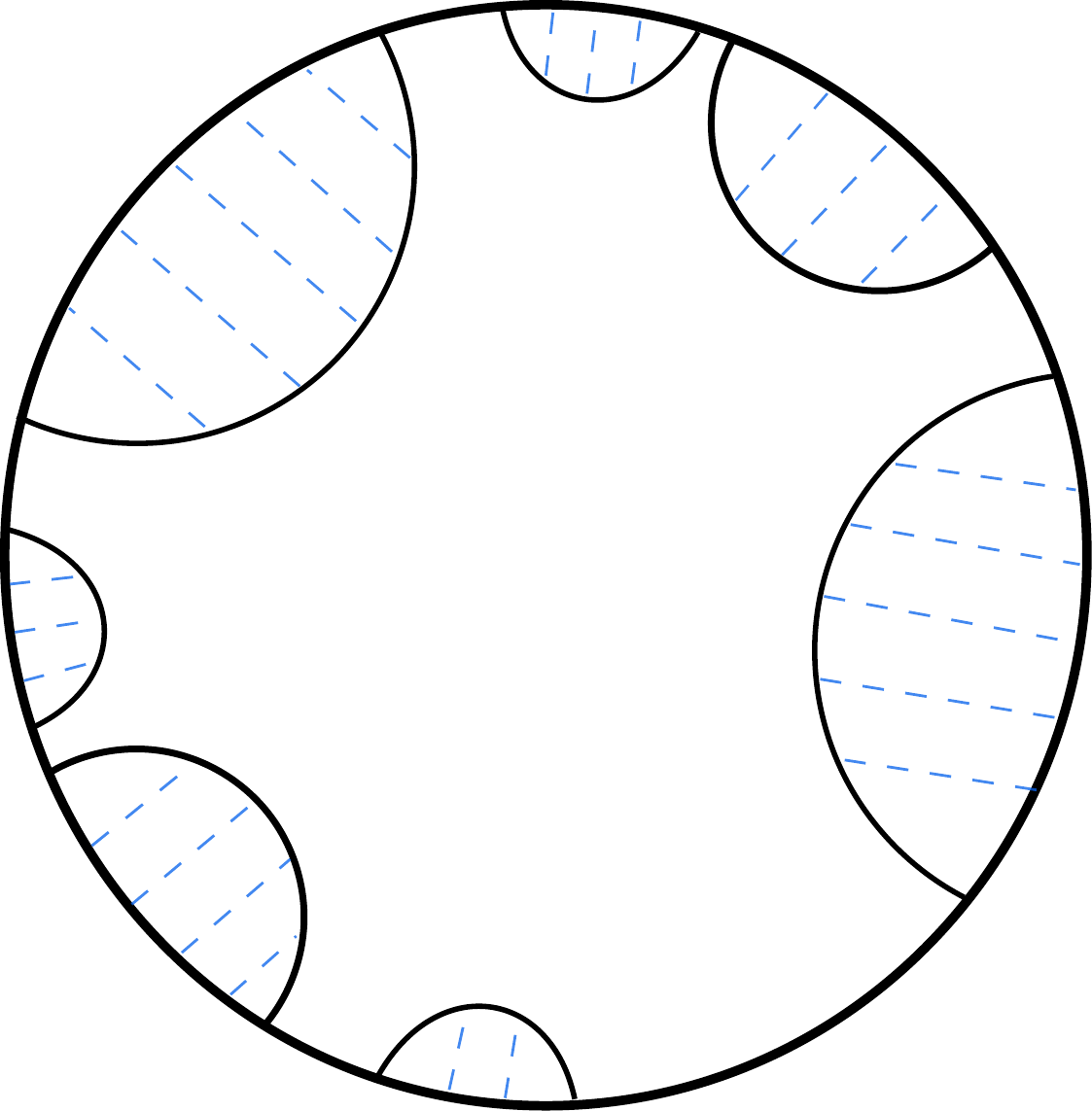}
\caption{The convex set $\mathcal{C}$ in $\mathbb{H}^2$ and a projection onto $\mathcal{C}$}
\label{fexrevetementuniversel}
\end{figure}

Note that there exists an orthogonal $H$-equivariant projection of $\mathbb{H}^2$ onto the convex polygon $\mathcal{C}$ \cite[II.2]{BridsonHaefliger99}. 
The equivariance induces that geodesic segments joining points of $\mathbb{H}^2$ to their projection onto the convex set $\mathcal{C}$ are all distinct modulo the action of $H$.
Thus the space formed by taking the quotient of the hyperbolic plane $\mathbb{H}^2$ by the group $H$ is composed of a convex core $\Sigma$ with a funnel attached to each connected component of its boundary (see Figure \ref{fexsurface2}). This space inherited from $\mathbb{H}^2$ a projection onto its convex core which, extend to infinity, identifies the connected components of the boundary of $\Sigma$ to the ends of the quotient space. 

\begin{figure}[ht!]
\labellist
\small \hair 2pt
\pinlabel $S$ at 5 100
\pinlabel $\Sigma$ at 155 100
\pinlabel $\Sigma$ at 465 100
\endlabellist
\centering
\includegraphics[scale=0.65]{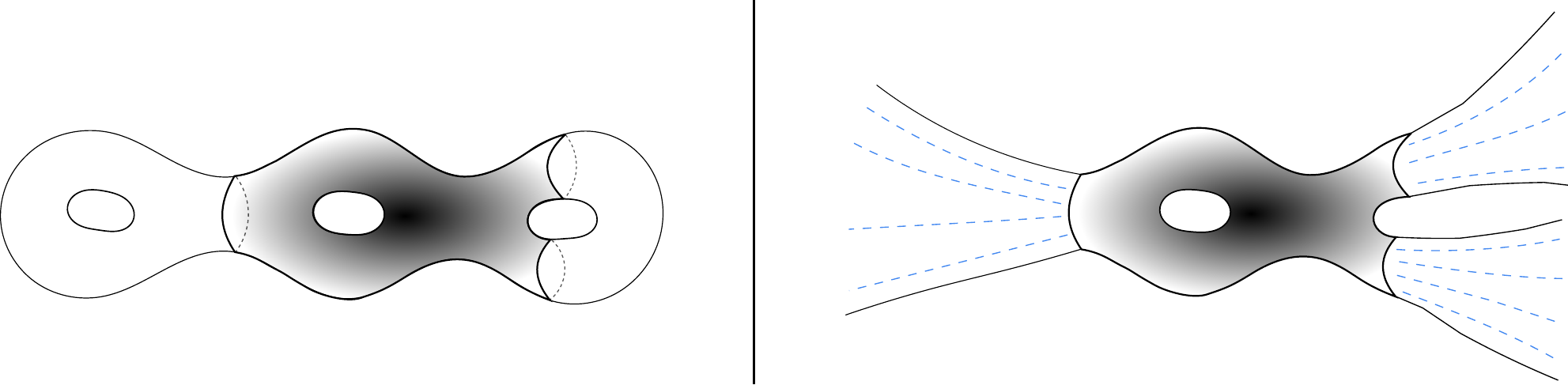}
\caption{Example of a closed surface $S$ of genus $3$ and a subsurface $\Sigma$ with totally geodesic boundary}
\label{fexsurface2}
\end{figure}

So the number of relative ends of the pair $(G,H)$ is equal to the number of connected components of the boundary of the convex core $\Sigma$. 
\medskip

In the light of this example, detecting the number of relative ends of a pair of group may still be possible in specific cases. Recall that the number of relative ends of a pair of group is essentially the number of ends of some quotient space. It appears that for the pairs of groups referred to in Theorem \ref{thmintro1}, the subgroup is usually not quasi-convex (Remark \ref{r_qc}). 
So we restrict ourselves to the study of the quotient of a geodesic proper hyperbolic space $X$ by a quasi-convex-cocompact group $H$ of isometries of $X$.  
The quotient space $\xh$ has good properties as it is hyperbolic and satisfies a geodesic extension property.
A careful study of the ends of this quotient space $\xh$ will lead us to the main result of this paper, namely:

\begin{thm} \label{t_main}
There exists an algorithm to compute the number of ends of a one-ended hyperbolic group relatively to a quasi-convex subgroup.
\end{thm}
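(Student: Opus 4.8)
The plan is to reduce the computation of the number of ends of the Schreier graph $\schghs$ to a computation about the quotient space $X/H$, where $X$ is the Cayley graph of $G$ (a geodesic proper hyperbolic space) and $H$ acts on $X$ as a quasi-convex-cocompact group of isometries. Since $\schghs$ and $X/H$ are quasi-isometric, they have the same number of ends, so it suffices to count ends of $X/H$. The first step is to recall (from the structure theory developed in the body of the paper) that $X/H$ decomposes, up to finite Hausdorff distance, as a compact core together with finitely many ``funnels'' attached along pieces of the image of the limit set; concretely, one works with $C(\Lambda H)/H$, the quotient of the convex hull of the limit set of $H$, which is compact by quasi-convex-cocompactness, and the complement deformation-retracts onto the boundary pieces. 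The number of ends of $X/H$ is then the number of ``directions to infinity'' detected by this boundary, which by hyperbolicity and the geodesic extension property of $X/H$ is a finite number.

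Next I would make this count effective. The key observation is that, for a one-ended hyperbolic group $G$ given by a finite presentation, one can compute a hyperbolicity constant $\delta$, solve the word problem, and enumerate the $\delta$-thin structure of balls in $X$; and since $H$ is quasi-convex, one is given (or can compute) a quasi-convexity constant, so that $C(\Lambda H)$ is within bounded distance of the set of points on geodesics between elements of $H$, and $C(\Lambda H)/H$ has computable diameter $D$. Then the ends of $X/H$ are in bijection with the connected components of the complement of the ball of radius $R$ around the image of a basepoint in $X/H$, for $R$ a computable constant (depending on $\delta$, the quasi-convexity constant, and $D$) large enough that this count stabilizes — this stabilization is exactly where hyperbolicity is used, via a ``visibility''/geodesic-extension argument showing that two points outside a large enough ball are in the same end if and only if they can be joined by a path staying outside a slightly smaller ball. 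Enumerating the finitely many vertices of $\schghs$ within radius $R$ and the edges among the sphere of radius $R$ is a finite, decidable task, so the number of such components is computable.

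The main obstacle I expect is proving the stabilization/effectivity statement: that there is a \emph{computable} radius $R_0$ beyond which the number of unbounded complementary components is constant and equals the number of ends. This requires a quantitative version of the qualitative facts established earlier — one must turn ``$X/H$ is hyperbolic and has the geodesic extension property'' into an explicit bound, expressed in terms of $\delta$, the quasi-convexity constant of $H$, and the co-diameter of $C(\Lambda H)/H$, controlling how far out one must go before no new ends appear and no two apparently distinct ends merge. A secondary subtlety is that one must be able to algorithmically produce the relevant finite data about $H$ (e.g.\ a quasi-convexity constant and a generating set for $H$ as a subgroup of $G$); this is where one invokes that $G$ is one-ended and hyperbolic together with known algorithms for quasi-convex subgroups of hyperbolic groups. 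Once $R_0$ is in hand, the remainder is a finite graph computation and the theorem follows.
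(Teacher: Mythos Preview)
Your overall strategy matches the paper's: reduce to counting equivalence classes on a sphere $\sph{R_0}$ in $\xh$ for a computable $R_0$, then observe this is a finite graph computation. You also correctly identify the main obstacle as the stabilization claim --- that beyond some computable radius the complementary-component count equals the number of ends.

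Where your plan has a genuine gap is in the tools you propose for stabilization. Hyperbolicity of $\xh$ and the geodesic extension property alone are \emph{not} sufficient: the danger is that two nearby points on a large sphere in $\xh$ might only be joinable by paths that dip arbitrarily deep toward the basepoint, so the sphere-component count need never stabilize. The paper resolves this via a Bestvina--Mess type condition $(\ddag_{M,K})$ on $X$ (close points on a sphere can be joined by bounded-length paths avoiding a slightly smaller ball), which it then pushes down to a condition $(\dag_M)$ on $\xh$. The crucial input is that $(\ddag_{M,K})$ holds for Cayley graphs of one-ended hyperbolic groups, and \emph{this} is where the one-ended hypothesis actually does its work: by Bowditch and Swarup the boundary of a one-ended hyperbolic group has no global cut point, and by Bestvina--Mess the failure of $(\ddag_{M,K})$ would produce exactly such a cut point. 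You locate the one-ended hypothesis only in the ``produce algorithmic data about $H$'' step, but in the paper it is doing the structural work inside the stabilization argument itself. Once $(\ddag_{M,4\dxh})$ is available, the paper produces explicit constants $M \geq 43\dxh + 4$, $R_0 = M + \dxh$, and (via Lemma~\ref{lanneau}) an explicit outer radius for the annulus in which the equivalence relation $\sim$ on $\sph{R_0}$ can be decided; the final step is then the finite computation you describe.
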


At first, to study the ends of the former quotient space $\xh$, we identify a convex core and fix a base point $\xzbar$ in it. Given the previous example of the surface group, the geometry of the boundary of the convex core may help us determine the number of ends of $\xh$. So we determine a number $R_0$ such that the sphere $\sph{R_0}$ contains the convex core and we define an equivalence relation $\sim$ on this sphere (Definition \ref{d_eq_sphere}). 
In section \ref{s_ends_quotient_spaces}, we establish a bijection between the set of ends of $\xh$ and the set of equivalence classes for the relation $\sim$ on $\sph{R_0}$, via what comes to be known as their shadows (Definition \ref{d_sdw}).  
Therefore, the number of ends of $\xh$ is equal to the number of equivalence classes on $\sph{R_0}$.
Applying this result to group theory, the number of relative ends of a one-ended hyperbolic group and a quasi-convex subgroup is equal to the number of equivalence classes of the sphere $\sph{R_0}$ in the corresponding quotient space. For this reason, the construction of an algorithm to compute the number of this equivalence classes leads to Theorem \ref{t_main}.
\medskip

The section \ref{s_background} gathers definitions and properties dealing with hyperbolicity and quasi-convexity and recalls the graph-theoretic approach of the theory of ends. 
In the section \ref{s_quasiconvex hypothesis}, Markov properties and Rips construction are combined to prove that, in general, there is no algorithm to compute the number of relative ends of a pair of group.
The section \ref{s_quotient_spaces} presents the quotient spaces and some of their interesting properties, including a new proof of the hyperbolicity of this space.
The section \ref{s_ends_quotient_spaces} is devoted to the study of the ends of the quotient spaces where we relate connected components of the boundary of a convex core to the ends. 
Finally, the section \ref{s_application} presents an algorithm to compute the number of relative ends of a hyperbolic group and a quasi-convex subgroup.
\medskip

\textbf{Acknowledgement.}
This article exposes results from my thesis written at the University of Strasbourg. I am grateful to my Ph.D. advisor Thomas Delzant for his continual support and helpful advice during this work.
I would like to thank Indira Chatterji and Eric Swenson, my Ph.D. thesis reviewers, for their numerous questions and comments which greatly contributed to improve the presentation of this paper.

\section{Background}
\label{s_background}

This section introduces the notations and gathers basic facts on hyperbolic geometry. For more details, the reader can refer to \cite{Gromov87} but also \cite{CDP90}, \cite{GdH90} and \cite{BridsonHaefliger99}.

\subsection{Hyperbolic spaces and groups}
Recall some definitions of the original paper of M.Gromov \cite{Gromov87}. 
Let $(X,d)$ be a geodesic metric space and fix a base-point $x_0$ in $X$. The \emph{Gromov product} of two points $x,y \in X$ at $x_0$ is given by 
\begin{equation*}
\langle x,y \rangle_{x_0} = \frac{1}{2} \left( d(x_0,x) + d(x_0,y) - d(x,y) \right) .
\end{equation*}
The space $X$ is \emph{hyperbolic} if there exists a real number $\delta \geq 0$ such that for all points $x,y,z\in X$,
\begin{equation*}
\langle x,y\rangle_{x_0} \geq \min \{ \langle x,z \rangle_{x_0} , \langle y,z \rangle_{x_0} \} - \delta .
\end{equation*} 
It turns out that a different choice of base-point only changes the value of the constant $\delta$, as explained page $2$ of \cite{CDP90}.

There are various equivalent definition of hyperbolicity (see \cite[\S 6.3]{Gromov87}, \cite[1.3.6]{CDP90}). Thus, a geodesic metric space is also \emph{hyperbolic} if there exists $\delta \geq 0$ such that each side of a geodesic triangle is contained in the $\delta$-neighbourhood of the union of the two other sides. 
Such triangles are called \emph{$\delta$-thin}.

Moreover, the paragraph $8$ of \cite{Gromov87} provides a classification of isometries of hyperbolic spaces. In particular, \emph{hyperbolic isometries} of a hyperbolic space $X$ are elements $h$ such that for any point $x \in X$, the map $n \mapsto h^n x$ is a quasi-isometric embedding of $\mathbb{Z}$ into $X$.

\begin{remark} \label{rgeometrically}
We use here the convention of I.Kapovich and N.Benakli \cite[2.22]{KapovichBenakli02}: a group acts \emph{geometrically} on a geodesic space if it acts by isometries, cocompactly and properly discontinuously (for any compact $K$ in the space, there is a finite number of element $g$ of this group such that $K$ encounter $g\cdot K$). In particular, the \v{S}varc-Milnor lemma \cite{Svarc55}-\cite{Milnor68} asserts that if a group acts geometrically on a proper geodesic metric space then the orbit map is a quasi-isometry.
\end{remark}

Recall that a metric space is \emph{proper} if every closed ball is compact. 
A geodesic proper hyperbolic space $X$ can be compactified by attaching its boundary $\partial X$. To do so, say that two rays $c_1$ and $c_2$ in $X$ are \emph{equivalent} if there exists $K>0$ such that for all $t \geq 0$, $d(c_1(t),c_2(t)) \leq K$. Then the \emph{boundary} $\partial X$ of $X$ is the set of equivalence classes of geodesic rays in $X$ for this relation.
The space $X\cup \partial X$ is endowed with the induced topology from $X$ (see e.g. Chapter $7$ of \cite{GdH90}).
For a broad overview on boundaries of hyperbolic spaces we refer the reader to the paper \cite{KapovichBenakli02} of I.Kapovich and N.Benakli.

According to \cite[10.6.6]{CDP90}, every hyperbolic isometry fixes two points on the boundary $\partial X$, which are attractive and repulsive points for the action of this isometry on $\partial X$.
Moreover, in a $\delta$-hyperbolic space, geodesic triangles with vertices on the boundary are $4\delta$-thin as proven e.g. in \cite[3.9]{KapovichWeidmann04}; these triangles are often called \emph{ideal triangles}. 

The approximating tree technique provided by the following lemma is often hide behind powerful results (see Proposition \ref{pcontraction}, Theorem \ref{txhhyp}) as it transcribes properties from trees (where $\delta = 0$) to hyperbolic spaces.
\begin{lem}[{\cite[\S 6.1]{Gromov87}}] \label{lapproxtree}
Let $(X,d)$ be a $\delta$-hyperbolic space, $x_0,x_1, \ldots, x_n$ be a set of $n+1$ points of $X\cup \partial X$ and $Y$ be the union of $n$ geodesic segments joining $x_0$ to other points. Assume that $2n \leq 2^k +1$. There exist a simplicial tree $(T,d_T)$ and a continuous map $f \co X \to T$ such that:
\begin{enumerate}
\item For all $u,v \in Y$, $d(u,v) - 2k\delta  \leq d_T(f(u),f(v)) \leq d(u,v)$.
\item The restriction of $f$ to each segment is isometric. \qed
\end{enumerate}
\end{lem}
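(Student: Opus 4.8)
The plan is to realize $T$ as a quotient of the finite union of segments $Y$, so that $f$ is essentially the quotient projection; the only substantive point is then a combinatorial estimate comparing the array of Gromov products based at $x_0$ with an honest tree metric. Set $\rho_{ij} = \langle x_i, x_j\rangle_{x_0}$ for $1 \le i \ne j \le n$ and $\rho_{ii} = d(x_0, x_i)$, with the usual $\liminf$/$\infty$ conventions when some $x_i$ lies on $\partial X$; the defining hyperbolic inequality gives $\rho_{ij} \ge \min\{\rho_{ik}, \rho_{jk}\} - \delta$, so $(\rho_{ij})$ is an ultrametric-type overlap array up to the error $\delta$, and moreover $\rho_{ij} \le \min\{\rho_{ii}, \rho_{jj}\}$. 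Writing $\mathrm{pt}_i(r)$ for the point of $[x_0, x_i]$ at distance $r$ from $x_0$, let $R$ be the equivalence relation on $Y$ generated by $\mathrm{pt}_i(r) \sim \mathrm{pt}_j(r)$ whenever $r \le \rho_{ij}$, let $T = Y/R$ with the quotient pseudometric, and let $f \co Y \to T$ be the projection. Since a geodesic star quotiented by identifications of initial subsegments is again a tree, $T$ is a simplicial metric tree: concretely, it is built from segments $I_i$ of length $d(x_0, x_i)$ glued along their initial subsegments of length $\tilde\rho_{ij} := \max \min_\ell \rho_{i_\ell i_{\ell+1}}$, the maximum over chains $i = i_0, i_1, \dots, i_m = j$ of the minimal overlap along the chain. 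Being a quotient map, $f$ is distance non-increasing, so $d_T(f(u), f(v)) \le d(u, v)$ for all $u, v \in Y$, which is the upper bound in (1); no two points of a fixed $[x_0, x_i]$ are identified and no detour through other legs shortens it, so $f$ restricts isometrically to each $[x_0, x_i]$, which is (2); and since $Y$ is closed in $X$ and the finite metric tree $T$ is an absolute retract, $f$ extends to a continuous $f \co X \to T$ by Dugundji's theorem, its values off $Y$ being irrelevant to the statement.

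The heart of the proof, and the place the hypothesis $2n \le 2^{k}+1$ enters, is the upper bound $\tilde\rho_{ij} \le \rho_{ij} + k\delta$. The inequality $\tilde\rho_{ij} \ge \rho_{ij}$ is immediate. For the converse, fix a chain $i = i_0, \dots, i_m = j$ with distinct terms (so $m \le n$) realizing $\tilde\rho_{ij} =: M$, i.e.\ $\rho_{i_\ell i_{\ell+1}} \ge M$ for every $\ell$. A naive induction along the chain loses one $\delta$ per step and only yields $\tilde\rho_{ij} \le \rho_{ij} + (m-1)\delta$, which is far too weak. Instead one connects $i_0$ to $i_m$ through a balanced binary pattern of intermediate indices: setting $p = \lfloor m/2\rfloor$ and using $\rho_{i_0 i_m} \ge \min\{\rho_{i_0 i_p}, \rho_{i_p i_m}\} - \delta$, an induction on $m$ gives $\rho_{i_0 i_m} \ge M - \lceil\log_2 m\rceil\,\delta$, that is $\tilde\rho_{ij} \le \rho_{ij} + \lceil\log_2 m\rceil\,\delta$. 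The hypothesis $2n \le 2^{k}+1$ forces $m \le n \le 2^{k-1}$, hence $\lceil\log_2 m\rceil \le k$, and the claimed bound follows.

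For the lower bound in (1), write $u = \mathrm{pt}_i(s)$, $v = \mathrm{pt}_j(t)$; in the tree $T$ one has $d_T(f(u), f(v)) = s + t - 2\min\{s, t, \tilde\rho_{ij}\}$, while $d(u,v) = s + t - 2\langle u, v\rangle_{x_0}$. Two applications of the hyperbolic inequality, using $\langle u, x_i\rangle_{x_0} = s$, $\langle v, x_j\rangle_{x_0} = t$ and $\langle x_i, x_j\rangle_{x_0} = \rho_{ij}$, give $\langle u, v\rangle_{x_0} \ge \min\{s, t, \rho_{ij}\} - 2\delta$; combining this with $\min\{s, t, \tilde\rho_{ij}\} \le \min\{s, t, \rho_{ij}\} + k\delta$ from the previous paragraph gives the lower bound of (1) after collecting constants. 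I expect the main obstacle to be precisely this bookkeeping: arranging the binary induction of the second paragraph together with the hyperbolic estimates so that the total additive error does not exceed the stated $2k\delta$, which is exactly why one assumes the sharp-looking inequality $2n \le 2^{k}+1$ rather than a crude bound on the number of points.
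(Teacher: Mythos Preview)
The paper does not prove this lemma: it is stated with a terminal \qed and attributed to \cite[\S 6.1]{Gromov87}, so there is no in-paper argument to compare against. Your construction is the standard one (it is essentially the proof given in Chapter~2 of \cite{GdH90} and Chapitre~2 of \cite{CDP90}): build the tree by gluing the legs of the geodesic star along initial subsegments of length equal to the ``ultrametrized'' Gromov products $\tilde\rho_{ij}$, and control $\tilde\rho_{ij}-\rho_{ij}$ by a dyadic subdivision of the optimal chain. The reduction to chains with distinct indices (you can always shortcut a repeating chain without decreasing the minimum) and the extension of $f$ from $Y$ to $X$ are routine, and your upper bound and isometry on each leg are immediate from the quotient description.

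Your own caveat is accurate: the only delicate point is landing on the exact constant $2k\delta$. As written, your count gives $\lceil\log_2 m\rceil\,\delta$ from the dyadic step together with $2\delta$ from the two uses of the four-point inequality in estimating $\langle u,v\rangle_{x_0}$, which a priori sums to $(k+1)\delta$ rather than $k\delta$ on the Gromov-product scale. The fix in the standard references is to fold one of those two $\delta$'s into the dyadic induction itself (equivalently, to compare $d(u,v)$ directly with the tree distance via thin triangles rather than via two separate applications of the four-point condition), after which $2n\le 2^{k}+1$ yields exactly $2k\delta$. So your strategy is correct; only the final bookkeeping needs tightening.
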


Hyperbolic spaces also satisfy the following geodesic extension property:
\begin{prop}\cite[3.1]{BestvinaMess91} \label{geodextprop}
Let $X$ be a geodesic proper $\delta$-hyperbolic space with base-point $x_0$ and $\# \partial X \geq 2$. Let $G$ be a group acting geometrically on $X$.
There exists a constant $\mu \geq 0$ such that for any point $x$ in $X$, there is a geodesic ray arising from $x_0$ passing within distance $\mu$ from $x$. \qed
\end{prop}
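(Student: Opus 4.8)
The plan is to produce, for an arbitrary $x\in X$, a geodesic ray from $x_0$ passing near $x$ \emph{not} by trying to extend the segment $[x_0,x]$ directly (which is awkward to control uniformly), but by routing through a bi-infinite geodesic. Since $\#\partial X\ge 2$, I would first fix two distinct boundary points $\xi_-,\xi_+\in\partial X$ together with a bi-infinite geodesic $\ell$ joining them; such a line exists because $X$ is proper, geodesic and hyperbolic, by the standard Arzel\`a--Ascoli limiting argument (see e.g. \cite{CDP90}, \cite{GdH90}, \cite{BridsonHaefliger99}). The decisive point is that the union $Y=\bigcup_{g\in G}g\ell$ of all $G$-translates of $\ell$ is coarsely dense in $X$. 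Indeed $Y$ is $G$-invariant by construction, and cocompactness provides $D\ge 0$ with $X=\bigcup_{g\in G}\bar B(gx_0,D)$; writing $x=gy$ with $d(x_0,y)\le D$ and using that $g$ is an isometry and $g^{-1}Y=Y$ gives $d(x,Y)=d(gy,Y)=d(y,g^{-1}Y)=d(y,Y)\le d(x_0,\ell)+D$. Hence every point of $X$ lies within $C_1:=d(x_0,\ell)+D$ of some translate $g\ell$.

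It then remains to replace $g\ell$ by geodesic rays issuing from $x_0$. For this I would observe that $g\ell$ is a bi-infinite geodesic with ideal endpoints $g\xi_-\neq g\xi_+$, consider the two geodesic rays $[x_0,g\xi_-)$ and $[x_0,g\xi_+)$, and apply the $4\delta$-thinness of ideal triangles recalled above (following \cite{KapovichWeidmann04}) to the triangle with vertices $x_0$, $g\xi_-$, $g\xi_+$: its side $g\ell$ is contained in the $4\delta$-neighbourhood of $[x_0,g\xi_-)\cup[x_0,g\xi_+)$. Combining the two steps, for any $x\in X$ there are a point $p\in g\ell$ with $d(x,p)\le C_1$ and a geodesic ray $r$ from $x_0$ (one of $[x_0,g\xi_\pm)$) with $d(p,r)\le 4\delta$, whence $d(x,r)\le C_1+4\delta$. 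Taking $\mu:=d(x_0,\ell)+D+4\delta$ completes the proof.

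The only genuinely delicate step is the first one: realising that one should pass to a bi-infinite geodesic and exploit the $G$-invariance of the union of its orbit — so that the distance to that set is constant along $G$-orbits and therefore bounded by cocompactness. The two auxiliary facts used (existence of a geodesic line between two distinct boundary points, and $4\delta$-thinness of ideal triangles) are standard and already available in the cited references; note also that only the isometric and cocompact part of the geometric action is needed here, together with properness of $X$ for the existence of $\ell$.
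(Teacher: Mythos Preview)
The paper does not supply a proof of this proposition: it is quoted verbatim from \cite{BestvinaMess91} and closed with a \qed, so there is nothing in the present paper to compare your argument against. Your proof is correct and is in fact the standard one (and close in spirit to the original Bestvina--Mess argument): fix a bi-infinite geodesic $\ell$ between two boundary points, use cocompactness to see that the $G$-orbit of $\ell$ is $C_1$-dense, and then use thinness of the (partially) ideal triangle $(x_0,g\xi_-,g\xi_+)$ to pass from $g\ell$ to a ray issuing from $x_0$. The only cosmetic remark is that the $4\delta$-thinness you invoke is stated in the paper for triangles with vertices on the boundary; the triangle you use has one finite vertex $x_0$, but the same bound (up to an absolute multiple of $\delta$) holds for such partially ideal triangles, so this does not affect the argument.
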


The constant $\mu$ is called a \emph{constant of geodesic extension} for $X$.

\begin{remark} \label{remarkgeodextprop}
If $x$ is a point in $X$ and $c \co \mathbb{R}_{\geq 0} \to X$ is a geodesic ray arising from $x_0$ and passing within distance $\mu$ from $x$ then, the Proposition 1.15 of \cite[III.H]{BridsonHaefliger99} gives $d(x,c(d(x_0,x))) \leq 2 \mu + 2\delta$ with $\delta$ a hyperbolicity constant for $X$. 
\end{remark}

A finitely generated group $G$ is \emph{hyperbolic} if, for a finite generating system $\mathcal{S}$ of $G$, the Cayley graph $\caygs$ is hyperbolic. This definition turns out to be independent on the choice of the finite generating set as a change of generating set produces a Cayley graph quasi-isometric to the first (see \cite[\S 2.3.E]{Gromov87}).
A hyperbolic group $G$ acts naturally by left multiplication on its Cayley graph $\caygs$ and this action continuously extends to an action of $G$ on the boundary of $\caygs$; then the \emph{boundary} of the hyperbolic group $G$ is the boundary of $\caygs$. As a quasi-isometry between hyperbolic spaces induces a homeomorphism between their boundary (see \cite[3.2.2]{CDP90}), this definition does not depend on the generating set up to homeomorphism.

\subsection{Quasi-convexity}
Let $(X,d)$ be a geodesic proper metric space. A subset $Y$ of $X$ is \emph{quasi-convex} if there exists a constant $\varepsilon \geq 0$ such that all geodesics joining two points of $Y$ in $X$ is contained in $Y^{+\varepsilon} = \{ x \in X \mid d(x,Y) \leq \varepsilon \}$, the $\varepsilon$-neighbourhood of $Y$. 
According to \cite[\S 7.3.A]{Gromov87}, if $X$ is a $\delta$-hyperbolic space and $Y \subset X$ is a $\varepsilon$-quasi-convex subset then $Y^{+\rho}$ is $2\delta$-quasi-convex, for all $\rho \geq \varepsilon$.
For instance, every quasi-geodesic is a quasi-convex set according to the Theorem of stability \cite[\S 7.2]{Gromov87}.

The concept of quasi-convexity extends naturally to groups: a subgroup $H$ of a finitely generated group $G$ is \emph{quasi-convex} if for some (and all) finite generating set $\mathcal{S}$ of $G$, $H$ is a quasi-convex subset of $\caygs$. 

Quasi-convex subgroups of hyperbolic groups have interesting properties: for instance, a quasi-convex subgroup of a hyperbolic group is in turn of finitely generated, hyperbolic and so finitely presentable (see chapter 10 of \cite{CDP90}).
\medskip

If the space $X$ is $\delta$-hyperbolic, recall that the limit set of a subgroup $H \leq Isom(X)$ is the set $\Lambda H = \overline{H \cdot x} \cap \partial X$ of accumulation points in $\partial X$ of an orbit of a point $x \in X$ under the action of $H$.
Then the \emph{weak convex hull} of the limit set of $H$, denoted by $C(\Lambda H)$, is the union of all bi-infinite geodesics with endpoints in $\Lambda H$. In particular, $\wch$ is $8\delta$-quasi-convex.

The group $H$ is \emph{quasi-convex-cocompact} if it acts geometrically on $\wch$. In particular, in this case, every orbit in $X$ under the action of $H$ is quasi-convex. 
\medskip

Let $X$ be a geodesic proper metric space and $Y$ a subset of $X$. 
Let $\eta \geq 0$. An \emph{$\eta$-projection} is a map $\pi \co X \to Y$ such that every point $x\in X$ satisfies $d(x,y) \leq d(x,Y) + \eta$. 
A fundamental property of projections on quasi-convex sets is the following contraction property:
\begin{prop} \cite{Coornaert89} \label{pcontraction}
Let $X$ be a $\delta$-hyperbolic geodesic space and $Y$ an $\varepsilon$-quasi-convex subset of $X$. Let $\pi \co X \to Y$ be an $\eta$-projection. For all points $x,x' \in X$, we have
\begin{equation*}
d(\pi(x),\pi(x')) \leq \max \{ \tau , 2\tau + d(x,x') - d(x,\pi(x)) - d(x',\pi(x')) \}
\end{equation*}
where $\tau = 12\delta + 2\varepsilon + 2\eta$.
Furthermore, we also have
\begin{equation*}
d(\pi(x),\pi(x')) \leq \tau + d(x,x') .
\end{equation*} \qed
\end{prop}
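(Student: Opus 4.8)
The plan is to derive everything from one auxiliary estimate, namely that projections are \emph{almost orthogonal}: if $p=\pi(x)$ then for every $y\in Y$ the Gromov product $\langle x,y\rangle_{p}$ is bounded by a constant $c=c(\delta,\varepsilon,\eta)$, and concretely one may take $c=4\delta+\varepsilon+\eta$. To prove this I would consider the geodesic triangle with vertices $x,p,y$ and the internal point $m$ it determines on the side $[p,y]$; by definition $d(p,m)=\langle x,y\rangle_{p}$, and the fact that the three internal points of a $\delta$-hyperbolic triangle are pairwise $4\delta$-close gives $d(x,m)\le d(x,p)-\langle x,y\rangle_{p}+4\delta$. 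On the other hand $m$ lies on a geodesic joining the two points $p,y$ of $Y$, so $\varepsilon$-quasi-convexity gives $d(m,Y)\le\varepsilon$; since $\pi$ is an $\eta$-projection this forces $d(x,p)\le d(x,Y)+\eta\le d(x,m)+\varepsilon+\eta$. Combining the two inequalities makes the term $d(x,p)$ cancel and leaves $\langle x,y\rangle_{p}\le 4\delta+\varepsilon+\eta$. This is the only step where the hypotheses on $\pi$ and on $Y$ are used; one could equally invoke the approximating tree of Lemma~\ref{lapproxtree} applied to the three points $\{x,p,y\}$, where the analogous statement is exact.

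Granting this, write $p=\pi(x)$ and $p'=\pi(x')$ and apply the estimate twice: with the pair $(x,p)$ and $y=p'$ it gives $d(p,p')\le d(x,p')-d(x,p)+2c$, and with $(x',p')$ and $y=p$ it gives $d(p,p')\le d(x',p)-d(x',p')+2c$. Adding,
\begin{equation*}
2\,d(p,p')\le d(x,p')+d(x',p)-d(x,p)-d(x',p')+4c .
\end{equation*}
Now I would use $\delta$-hyperbolicity in its four-point form for the quadruple $x,x',p',p$: the sum $d(x,p')+d(x',p)$ exceeds by at most $2\delta$ the larger of $d(x,x')+d(p,p')$ and $d(x,p)+d(x',p')$. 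If the larger term is the second one, the displayed inequality becomes $2\,d(p,p')\le 2\delta+4c$, hence $d(p,p')\le\delta+2c\le\tau$. If it is the first one, a single copy of $d(p,p')$ cancels and we obtain $d(p,p')\le d(x,x')-d(x,p)-d(x',p')+2\delta+4c$, which is at most $2\tau+d(x,x')-d(x,\pi(x))-d(x',\pi(x'))$ because $2\delta+4c\le2\tau$ for $\tau=12\delta+2\varepsilon+2\eta$. Combining the two cases is exactly the first inequality of the Proposition.

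For the second (coarsely Lipschitz) inequality hyperbolicity is not needed at all: in the displayed bound above, replace $d(x,p')$ by $d(x,x')+d(x',p')$ and $d(x',p)$ by $d(x',x)+d(x,p)$ using only the triangle inequality; the terms $d(x,p)$ and $d(x',p')$ cancel and one gets $2\,d(p,p')\le 2\,d(x,x')+4c$, that is $d(p,p')\le d(x,x')+2c\le d(x,x')+\tau$.

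The only delicate point is the almost-orthogonality estimate, and there the difficulty is purely one of bookkeeping: one has to keep track of which normalization of the hyperbolicity constant is being used (the Gromov-product $\delta$, the thin-triangles constant, and the insize of triangles differ by fixed factors) and then verify that the resulting $c$ is small enough that $2c\le\tau$ and $2\delta+4c\le2\tau$. Everything after that estimate is a couple of lines of elementary manipulation.
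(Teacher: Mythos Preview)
The paper does not give a proof of this proposition at all: it is quoted from \cite{Coornaert89} and closed immediately with \qed. So there is nothing in the paper to compare your argument against.

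That said, your argument is correct and is essentially the standard one. The almost-orthogonality estimate $\langle x,y\rangle_{\pi(x)}\le 4\delta+\varepsilon+\eta$ is derived exactly as one does it (internal point on $[p,y]$ is $\varepsilon$-close to $Y$, then the $\eta$-projection property forces the Gromov product to be small); the two applications and the addition are fine; the use of the four-point inequality to split into cases is the usual mechanism; and the check that the resulting constants fit under $\tau=12\delta+2\varepsilon+2\eta$ and $2\tau$ goes through ($\delta+2c=9\delta+2\varepsilon+2\eta\le\tau$ and $2\delta+4c=18\delta+4\varepsilon+4\eta\le 2\tau$). The Lipschitz inequality via the triangle inequality is also correct and indeed does not use hyperbolicity beyond what is already encoded in the displayed bound.

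One small remark on your bookkeeping caveat: the paper defines $\delta$-hyperbolicity via the Gromov product, and separately mentions that triangles are $\delta$-thin for a possibly different $\delta$. Your use of ``internal points pairwise $4\delta$-close'' is compatible with the Gromov-product definition, so the constants you wrote are consistent with the paper's conventions; there is no hidden factor to worry about.
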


If $H$ is a group acting on $X$, it will be convenient to assume that $\varepsilon$-projections on $H$-invariant sets are also $H$-equivariant, as in the proof of Proposition \ref{palpha} for example.

\subsection{Ends and relative ends}

This part recalls the graph-theoretic approach used to define ends of finitely generated group in \cite[p.144-148]{BridsonHaefliger99}.

A map between two topological spaces is \emph{proper} if the inverse image of any compact set by this map is also a compact set. Two proper rays $c$ and $c'$ in a topological space $X$ are \emph{converging to the same end} of $X$ if for every compact set $K \subset X$ there exists an integer $N$ such that $c\left(\left[N,+\infty\right)\right)$ and $c'\left(\left[N,+\infty\right)\right)$ are contained in the same path-connected component of $X\setminus{K}$. It defines an equivalence relation on proper rays. The set of these equivalence classes form the \emph{set of ends} of the topological space $X$, denoted by $Ends(X)$. 
As a quasi-isometry between geodesic proper spaces induces a homeomorphism between their set of ends, the set of ends of a finitely generated group is the set of ends of a Cayley graph of this group. The \emph{number of ends} of a finitely generated group $G$ is denoted by $e(G)$.

\medskip

Let $G$ be a group generated by a finite set $\mathcal{S}$ and let $H$ be a subgroup of $G$. The \emph{Schreier graph} $\schghs$ of $G$ with respect to $H$, also called relative Cayley graph, is the quotient of $\caygs$ under the left action of $H$. 

\begin{remark} \label{rkschreier}
If $H$ is a normal subgroup of $G$, the Schreier graph $\schghs$ is exactly the Cayley graph of the group $G/H$ associated with $\mathcal{S}$. Indeed, in this case, the cosets of $H$ in $G$ are the orbits of elements of $G$ under the action of $H$.
\end{remark}

\medskip

As already mentioned in the introduction, the \emph{number of relative ends of the pair $(G,H)$ associated with $\mathcal{S}$} is the number of ends of the Schreier graph $\schghs$. The aim of the following work is to give an algorithm to determine the number of relative ends of a pair of groups. 

\section{The quasiconvex hypothesis}
\label{s_quasiconvex hypothesis}

Computing the number of relative ends of a pair of groups happens to be more difficult than expected. The following theorem states that, in general, there is no algorithm to compute the number of relative ends of a pair of groups.

\begin{thm} \label{tsansquasi-convexepasdAlgorithme}
We can construct a pair of groups $(G,H)$ for which it is impossible to decide algorithmically if:
\begin{enumerate}
\item $\gh$ is finite;
\item the pair $(G,H)$ has two relative ends;
\item the pair $(G,H)$ has infinitely many relative ends.
\end{enumerate}
\end{thm}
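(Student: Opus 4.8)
The plan is to reduce the problem to a known undecidability result about finitely presented groups by combining the Rips construction with Markov properties. The Rips construction takes an arbitrary finitely presented group $Q = \langle \mathcal{S} \mid \mathcal{R} \rangle$ and produces a short exact sequence $1 \to H \to G \to Q \to 1$ where $G$ is a hyperbolic group (in fact $C'(1/6)$ small cancellation) and $H$ is a finitely generated normal subgroup. By Remark \ref{rkschreier}, since $H$ is normal the Schreier graph $\schghs$ is precisely a Cayley graph of $Q = G/H$, so the number of relative ends of $(G,H)$ equals the number of ends $e(Q)$ of $Q$. Thus the three alternatives in the statement translate, via Hopf's trichotomy, into: (1) $Q$ is finite ($e(Q) = 0$); (2) $Q$ has two ends (equivalently $Q$ is virtually $\mathbb{Z}$, i.e. virtually infinite cyclic); (3) $Q$ has infinitely many ends.

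The key step is then to exhibit a recursively enumerable family of finite presentations of groups $Q$ for which no algorithm can decide which of the cases (1), (2), (3) holds. Here I would invoke the Adian--Rabin style unsolvability results: finiteness is a Markov property (it is inherited by no supergroups in the relevant sense and fails for some group, e.g. $\mathbb{Z}$), hence "is $Q$ finite?" is algorithmically undecidable over the class of finitely presented groups; likewise "is $Q$ virtually cyclic?" and "does $Q$ have more than one end?" are Markov-type properties and are undecidable. To get a single undecidable instance distinguishing all three, I would take a fixed recursive sequence of presentations $P_n$, together with a fixed group known to be infinite, one-ended (or with the desired end behaviour), and build from a Turing machine an embedding-style construction so that $Q_n$ is finite iff machine $n$ halts, and otherwise $Q_n$ has a prescribed number of ends; more directly, one can use a presentation template $Q_w$ depending on a word problem instance so that $Q_w$ is trivial when $w = 1$ and contains a non-abelian free subgroup (hence has infinitely many ends) or is $\mathbb{Z}$ otherwise, arranging the three-way ambiguity. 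Feeding each such $Q_n$ into the Rips construction yields the required pairs $(G_n, H_n)$ with $G_n$ hyperbolic, and deciding the number of relative ends of $(G_n,H_n)$ would decide the corresponding undecidable property of $Q_n$.

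The main obstacle I expect is the bookkeeping needed to realize all three cases simultaneously from a single uniform construction: one must produce a recursive family where the answer genuinely ranges over $\{$finite, two-ended, infinitely-many-ended$\}$ and is undecidable, rather than merely showing one of the properties is undecidable in isolation. This requires choosing the input groups $Q$ carefully — e.g. starting from a presentation that is either trivial, or $\mathbb{Z}$, or contains a free group of rank $2$ depending on unsolvable data — and verifying that the Rips construction preserves the short exact sequence and hyperbolicity of $G$ uniformly across the family (which it does, with explicit hyperbolicity constants coming from small cancellation). The verification that $G$ is hyperbolic and that $H$ is finitely generated are standard features of the Rips construction and can be quoted; the delicate point is the logical packaging via Markov properties, and ensuring $H$ is genuinely the kernel so that Remark \ref{rkschreier} applies and $e(G/H)$ is exactly the relative end count.
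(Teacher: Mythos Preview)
Your overall framework matches the paper's: apply the Rips construction to an auxiliary finitely presented group $Q$, use that $H$ is normal so the Schreier graph is the Cayley graph of $Q\cong G/H$, and reduce the three questions to undecidable properties of $Q$ via Adyan--Rabin. That part is fine and is exactly what the paper does.

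The gap is in the step you yourself flag as ``the main obstacle.'' You assert that ``does $Q$ have more than one end'' (or ``infinitely many ends'') is a Markov-type property, but it is not: every nontrivial finitely presented group $G_-$ embeds in $G_-*\mathbb{Z}$, which has infinitely many ends, so no witness $G_-$ exists. Thus Adyan--Rabin does not apply directly to case~(3), and your vaguer suggestions (word-problem templates producing trivial vs.\ free vs.\ $\mathbb{Z}$) are not carried out. The paper resolves this with a concrete trick you are missing: it does not apply Markov properties to $Q$ itself, but sets $Q=A*\mathbb{Z}_2$ for an arbitrary finitely presented $A$ and proves (Lemma~\ref{lQ*Z2}) that $Q$ is finite iff $A$ is trivial, $e(Q)=2$ iff $|A|=2$, and $e(Q)=\infty$ iff $|A|>2$. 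The three questions about $Q$ then become the questions ``is $A$ trivial'', ``is $|A|=2$'', ``is $|A|\le 2$'' about $A$, and \emph{these} are genuine Markov properties (with witnesses $\mathbb{Z}_2$ and $\mathbb{Z}_5$), so Adyan--Rabin applies. Feeding $Q=A*\mathbb{Z}_2$ into Rips then finishes the proof. Once you insert this $A*\mathbb{Z}_2$ step, your argument becomes complete and coincides with the paper's.
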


The proof of this theorem mainly relies on the Rips construction given in \cite{Rips82}. Given a finite presentation of a group $Q$, the Rips construction furnishes a hyperbolic group $G$ (in fact a small cancellation group) and a subgroup $H$ of $G$ for which the number of relative ends depends on non recursively recognisable properties of the original group presentation, namely Markov properties.
\medskip

As explained in \cite[IV.4]{LS77}, a \emph{Markov property} of a finitely presented group is a property $\mathcal{P}$ for which there exist two finitely presented groups, $G_+$ and $G_-$, such that $G_+$ satisfies $\mathcal{P}$ and $G_-$ cannot be embedded in any finitely presented group which satisfies $\mathcal{P}$.
A large range of well known properties of finitely presented groups are Markov properties (see also \cite{Miller92} for more details). 
The following Markov properties are used in the proof of Theorem \ref{tsansquasi-convexepasdAlgorithme}:
\begin{enumerate}
\item ``Being the trivial group" (i.e. the group reduced to a single element) with $G_+ = 1$ and $G_- = \mathbb{Z}_2$ as witnesses.
\item ``Being a group of cardinality $2$" with $G_+ = \mathbb{Z}_2$ and $G_- = \mathbb{Z}_5$.
\item ``Being of cardinality less than or equal to $2$" with $G_+ = \mathbb{Z}_2$ and $G_- = \mathbb{Z}_5$. 
\end{enumerate}

The main result about Markov properties is the Adyan-Rabin theorem \cite{Adyan55} \cite{Rabin58}. It states that Markov properties are not recursively recognizable. This will be a key point of the proof of Theorem \ref{tsansquasi-convexepasdAlgorithme}.

We will also need the following lemma:

\begin{lem} \label{lQ*Z2}
Let $A$ be a finitely presented group. Let $Q = A \ast \mathbb{Z}_2$. 
\begin{enumerate}
\item The group $Q$ is finite if and only if the group $A$ is the trivial group;
\item The group $Q$ has $2$ ends if and only if the group $A$ is of cardinality $2$;
\item The group $Q$ has infinitely many ends if and only if the cardinality of $A$ is greater than $2$. 
\end{enumerate}
\end{lem}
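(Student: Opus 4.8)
The plan is to analyze $Q = A \ast \mathbb{Z}_2$ by Grushko's theorem and the standard structure theory of free products, recalling that the number of ends of a finitely generated group is $0$, $1$, $2$, or $\infty$. For the direction (1), observe that $Q$ is finite precisely when it has $0$ ends. If $A$ is trivial then $Q \cong \mathbb{Z}_2$ is finite. Conversely, if $Q$ is finite then a free product $A \ast B$ with both factors nontrivial is finite only when… in fact it is never finite unless one factor is trivial, since $A \ast \mathbb{Z}_2$ contains $A$ as a subgroup and also, if $A$ is nontrivial, contains an element $a \neq 1$ in $A$ and the generator $t$ of $\mathbb{Z}_2$, and the element $at$ has infinite order (its normal form $a t a t \cdots$ never reduces), so $Q$ is infinite. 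Hence $Q$ finite forces $A = 1$.

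For (2) and (3), I would use the fact (Stallings' theorem, or directly the theory of ends of free products) that a free product $A \ast B$ of two nontrivial finitely generated groups has exactly $2$ ends if and only if $A \ast B \cong \mathbb{Z}_2 \ast \mathbb{Z}_2$ (the infinite dihedral group), i.e. if and only if $\{|A|,|B|\} = \{2,2\}$; in all other cases with both factors nontrivial and the product infinite and not virtually $\mathbb{Z}$, it has infinitely many ends. Concretely: $A \ast \mathbb{Z}_2$ splits over the trivial (hence finite) group, so by Stallings it has more than one end whenever it is infinite, i.e. whenever $A \neq 1$. It has exactly $2$ ends iff it is virtually $\mathbb{Z}$; a free product $A \ast \mathbb{Z}_2$ is virtually $\mathbb{Z}$ iff $A \cong \mathbb{Z}_2$, which gives the infinite dihedral group $\mathbb{Z}_2 \ast \mathbb{Z}_2$. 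This yields statement (2): $Q$ has $2$ ends iff $|A| = 2$. Everything else with $A$ nontrivial and $|A| > 2$ then falls into the ``infinitely many ends'' case, giving (3); and I should also note the degenerate consistency that the three cases ($A = 1$, $|A| = 2$, $|A| > 2$) are exhaustive and mutually exclusive, so the three bi-implications are compatible.

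For the claim that $A \ast \mathbb{Z}_2$ is virtually $\mathbb{Z}$ only when $A \cong \mathbb{Z}_2$, I would argue as follows. If $A \ast \mathbb{Z}_2$ is virtually $\mathbb{Z}$, it acts properly cocompactly on a line, hence is virtually cyclic; a virtually cyclic group that splits as a nontrivial free product must be $\mathbb{Z}_2 \ast \mathbb{Z}_2$ (for instance because a finite-index subgroup is $\mathbb{Z}$, which is freely indecomposable, so by the Kurosh subgroup theorem the factors $A$ and $\mathbb{Z}_2$ must themselves be finite, and a free product of two finite groups is virtually $\mathbb{Z}$ iff both have order $2$). Since $\mathbb{Z}_2$ is already a factor, this forces $A \cong \mathbb{Z}_2$. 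Alternatively one can avoid Kurosh: if $|A| \geq 3$ then $A \ast \mathbb{Z}_2$ contains a nonabelian free subgroup (e.g. generated by $a t$ and $a' t$ for distinct nontrivial $a, a' \in A$), so it cannot be virtually cyclic.

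The main obstacle is not any deep new idea but rather assembling the right classical inputs cleanly: the trichotomy of the number of ends, Stallings' characterization of groups with more than one end via splittings over finite subgroups, and the precise recognition of the $2$-ended case as ``virtually $\mathbb{Z}$,'' together with the elementary observation that $\mathbb{Z}_2 \ast \mathbb{Z}_2$ is the only free product of nontrivial groups that is virtually cyclic. Once these are in place the three equivalences follow by a short case analysis on $|A| \in \{1, 2, \geq 3\}$. I would present it as a single paragraph distinguishing these three cases and citing Stallings' theorem for the ``$>1$ end'' direction.
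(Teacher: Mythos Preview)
Your argument is correct and follows essentially the same route as the paper: both proofs invoke the Hopf/Stallings trichotomy on the number of ends, and for part~(2) the paper cites Scott--Wall's splitting characterization of two-ended groups while you use the equivalent ``virtually~$\mathbb{Z}$'' criterion together with the observation that $\mathbb{Z}_2 \ast \mathbb{Z}_2$ is the only nontrivial free product that is virtually cyclic. The overall structure and the classical inputs are the same; your version is perhaps slightly more self-contained in that it does not require the full Scott--Wall statement.

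One small slip worth fixing: your parenthetical example that $at$ and $a't$ generate a free subgroup when $|A|\geq 3$ is not always correct. In $\mathbb{Z}_3 \ast \mathbb{Z}_2$ with $a$ of order~$3$, $t$ of order~$2$, and $a' = a^{-1}$, one computes $(at)(a't)^{-1} = (at)(ta) = a^2$, which has order~$3$, so $at$ and $a't$ satisfy a nontrivial relation. This does not affect your proof, since your Kurosh alternative is valid; if you want an explicit free subgroup, a safe choice is the kernel of the natural map $A \ast \mathbb{Z}_2 \to A \times \mathbb{Z}_2$, which is free of rank $(|A|-1)(|\mathbb{Z}_2|-1) = |A|-1 \geq 2$.
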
 

\begin{proof} 
Firstly, by Hopf's theorem, $e(Q)=0$ if and only if the group $Q$ is finite. 
But by definition, the free product of two groups is finite if and only if one of them is the trivial group. This implies that the group $Q= A \ast \mathbb{Z}_2$ is finite if and only if the group $A$ is the trivial group. Then $e(Q)=0$ if and only if $A$ is the trivial group.
\medskip

The second point of this result comes from the Theorem 5.12 of \cite{ScottWall79} of P.Scott and T.Wall which indicates that $e(Q)=2$ if and only if $Q$ splits into a HNN extension $A*_C$ or in a free product with amalgamation $A *_C B$ where $C$ is finite, $|A/C| = |B/C| = 2$. By comparison with $Q = A \ast \mathbb{Z}_2$, the group $C$ is here the trivial group, $B = \mathbb{Z}_2$ and $A$ is a group of cardinal $2$.
Thus the group $Q$ has two ends if and only if $Q = \mathbb{Z}_2 \ast \mathbb{Z}_2$.
\medskip

The point (1) and Stallings' theorem \cite{Stallings71} imply that $Q$ has infinitely many ends if and only if the cardinality of $A$ is greater than $2$.
\end{proof}

\begin{proof}[Proof of Theorem \ref{tsansquasi-convexepasdAlgorithme}]
Let $A$ be a finitely presented group. 
By Adyan-Rabin's theorem, there is no algorithm to decide if $A$ satisfies the properties :
\begin{enumerate}
\item ``Being the trivial group";
\item ``Being a group of cardinality $2$";
\item ``Being of cardinality less than or equal to $2$".
\end{enumerate}
Then, by Lemma \ref{lQ*Z2}, there is no algorithm to decide if 
\begin{enumerate}
\item $Q$ is finite;
\item $Q$ has $2$ ends;
\item $Q$ has infinitely many ends.
\end{enumerate}

Now denote by $G$ and $H$ groups associated to $Q$ by the Rips construction. So the number of ends of $Q$ is equal to the number of relative ends of the pair $(G,H)$.
Indeed, the Rips construction for the group $Q$ furnishes a short exact sequence 
\begin{equation*}
1 \to H \to G \to Q \to 1 
\end{equation*}
where $G$ is a hyperbolic group and $H$ is a finitely presented subgroup of $G$. In particular, the group $G/H$ is isomorphic to $Q$. Therefore the Schreier graph of the pair $(G,H)$ is exactly the Cayley graph of $Q$ according to the same generating set. Thus the number of relative ends of the pair $(G,H)$ is equal to the number of ends of $Q$. In particular, there is no algorithm to decide if:
\begin{enumerate}
\item $\gh$ is finite;
\item The pair $(G,H)$ has two relative ends;
\item The pair $(G,H)$ has infinitely many relative ends.
\end{enumerate}
\end{proof}

It is possible to give an explicit finite group presentation $\langle \mathcal{S} \mid \mathcal{R} \rangle$ for which we cannot decide if the group defined is the trivial group (see \cite{Collins86} for instance). 
This example provides a group $Q$ from which the Rips construction produces explicitly a hyperbolic group $G$ and a subgroup $H$ such that $\gh$ is finite.
And so this is an example for the case (1) of the theorem. 
Likewise, with the presentations $\langle \mathcal{S}, b \mid \mathcal{R} , b^2 \rangle$ and $\langle \mathcal{S}, b \mid \mathcal{R} , b^3 \rangle$, we can explicit hyperbolic groups and a subgroup such that the number of relative ends is two or infinitely many. 

\begin{remark} \label{r_qc}
In each case, the subgroup $H$ raised by the Rips construction is a normal subgroup of the hyperbolic group $G$. If such a subgroup was quasi-convex then it would necessarily be finite or of finite index in $G$ (see Proposition 3.9 in \cite{ABC91}). Added to the well known properties of quasi-convex subgroups of hyperbolic group, this observation tends to consider this particular class of subgroups.
\end{remark}

\section{Quotient spaces}
\label{s_quotient_spaces}
\subsection{Description of quotient spaces}

From now on, let $X$ be a geodesic proper hyperbolic space and let $H$ be a quasi-convex-cocompact group of isometries of $X$. Denote by $\wch$ the weak convex hull of the limit of $X$ and fix a point $x_0 \in \wch$. Let $\dx \geq 0$ be a constant large enough for $X$ to be $\dx$-hyperbolic and any orbit of $H$ in $X$ to be $\dx$-quasi-convex.

For every point $x$ in $X$, denote by $\xbar$ the point in $\xh$ equal to the orbit of $x$ under the action of $H$.
The quotient space $\xh$ is endowed by a natural distance inherited from $X$: for every points $x$ and $y$ of $X$, 
\begin{align*}
d_{\xh} (\xbar,\ybar) = \inf\limits_{h\in H} d_X (x,hy) = d_X(x,H\cdot y) .
\end{align*}
(Distance's subscripts are omitted when it is clear from the context.)

As $H$ is quasi-convex-cocompact, the action of $H$ on its weak convex hull $\wch$ is geometric and so the quotient $\coco$, called \emph{convex core}, is contained in a closed ball of $\xh$. 

\subsection{Hyperbolicity}

In order to get the following result, we will restrict ourselves to the case where $H$ contains no element of finite order.

\begin{prop} \label{palpha}
There is a constant $\alpha$, depending only on $\dx$, such that for every element $h \in H$ and every point $x \in X$ within distance $\alpha$ from $H \cdot x_0$, we have $d(x,hx) \geq 100\dx$.
\end{prop}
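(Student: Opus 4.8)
The plan is to argue by contradiction and extract a non-trivial element of $H$ of bounded translation length, which forces either a finite-order element or a contradiction with the discreteness of the $H$-action on $\wch$. First I would suppose that for every $\alpha$ there were a point $x$ at distance at most $\alpha$ from $H\cdot x_0$ and an $h\in H$ with $d(x,hx)<100\dx$. Translating by an element of $H$ we may assume $d(x,x_0)\leq\alpha$, so $d(x_0,hx_0)\leq d(x_0,x)+d(x,hx)+d(hx,hx_0)\leq 2\alpha+100\dx$; thus $h$ is an element of $H$ whose displacement of the fixed base-point $x_0\in\wch$ is bounded by a constant depending only on $\alpha$ and $\dx$. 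The key point is that such $h$ must be non-trivial: if $h=1$ then $d(x,hx)=0\geq 100\dx$ only when $\dx=0$, and for $\dx=0$ the statement is vacuous (a tree has the relevant displacement property trivially, or one simply enlarges $\dx$ to be strictly positive), so we may assume $h\neq 1$ and hence, since $H$ is torsion-free, $h$ has infinite order.

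Next I would invoke the classification of isometries: an infinite-order element of a torsion-free group acting geometrically (hence properly discontinuously) on the proper hyperbolic space $\wch$ must be a hyperbolic isometry of $\wch$ — it cannot be elliptic (that would force a finite orbit, hence finite order by properness) nor parabolic (a geometric action has no parabolics). So $h$ is hyperbolic with an axis in $\wch$, and its translation length $\ell(h)=\lim_n d(x_0,h^n x_0)/n$ is positive. The aim is now to contradict a \emph{uniform} lower bound on $\ell(h)$ over all non-trivial $h\in H$: since $H$ acts geometrically on $\wch$, the set of non-trivial elements moving $x_0$ a bounded distance is finite, so there are only finitely many conjugacy-relevant $h$ to consider, and each has a fixed positive translation length; choosing $\alpha$ small enough (so that $2\alpha+100\dx$ stays below the minimum of these finitely many translation lengths, after correcting $\ell(h)$ versus $d(x_0,hx_0)$ by the standard $O(\dx)$ error) rules them all out.

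More carefully, and this is the step I expect to be the main obstacle, I need to control $d(x,hx)$ from below not just for $x=x_0$ but for $x$ ranging over the $\alpha$-neighbourhood of $H\cdot x_0$, and relate it back to the translation length. Here I would use the approximating-tree Lemma \ref{lapproxtree} applied to the four points $x,hx,h^2x$ together with $x_0$ (or a suitable orbit point), on which a tree computation gives $d(x,h^2x)\geq 2d(x,hx)-C\dx$ and more generally $d(x,h^nx)\geq n\,d(x,hx)-C_n\dx$; comparing with $d(x_0,h^nx_0)\leq d(x_0,x)+d(x,h^nx)$... no — rather, I want an upper bound on $\ell(h)$ in terms of $d(x,hx)$, so I would use $d(x,h^nx)\le n\,d(x,hx)$ directly together with $\ell(h)=\lim d(x,h^nx)/n$, giving $\ell(h)\leq d(x,hx)<100\dx$ whenever such an $x$ exists. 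Then, since $d(x_0,hx_0)$ and $\ell(h)$ differ by at most a constant multiple of $\dx$ (Proposition \ref{pcontraction}-style estimate, or \cite[10.6.6]{CDP90}), and since by the geometric action there are only finitely many non-trivial $h\in H$ with $d(x_0,hx_0)$ below any fixed threshold, the infimum $\ell_0$ of $\ell(h)$ over non-trivial $h\in H$ is a positive real number depending only on the action, i.e. ultimately only on $\dx$. It therefore suffices to note that $\ell_0>0$ is itself comparable to $\dx$ after possibly enlarging $\dx$; declaring $\alpha$ to be a small multiple of $\dx$ chosen so that the bounded-displacement element produced above would have $\ell(h)<\ell_0$ yields the contradiction, and tracking the constants shows $\alpha$ depends only on $\dx$, as claimed.
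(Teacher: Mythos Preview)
Your argument rests on a misreading of the statement. From the paper's proof and from how the proposition is used in Proposition~\ref{pcover}, ``within distance $\alpha$ from $H\cdot x_0$'' is intended to mean \emph{at distance at least $\alpha$} from the orbit: the conclusion is that points \emph{far} from $H\cdot x_0$ are displaced by at least $100\dx$ under any non-trivial $h\in H$. You read it as ``at distance at most $\alpha$'', and the statement you are trying to prove is simply false: take $x=x_0$ itself (which lies at distance $0$ from $H\cdot x_0$, hence satisfies your hypothesis for every $\alpha\geq 0$) and any generator $h$ of $H$; there is no reason whatsoever that $d(x_0,hx_0)\geq 100\dx$, and nothing in the standing hypotheses forces it. Relatedly, your assertion that the infimum $\ell_0$ of translation lengths over non-trivial $h\in H$ ``depends only on $\dx$'' is unsupported and in general wrong: $\dx$ is chosen only so that $X$ is $\dx$-hyperbolic and orbits are $\dx$-quasi-convex, which puts no lower bound on the shortest non-trivial translation in $H$.

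The paper's argument goes in the opposite direction and uses a different mechanism. One applies the contraction property of the $H$-equivariant projection $\pi$ onto $H\cdot x_0$ (Proposition~\ref{pcontraction}) to the pair $(x,hx)$: when $[h]>16\dx$ the first branch of the $\max$ is ruled out, and the second branch gives
\[
d(\pi(x),h\pi(x))\ \leq\ 32\dx + d(x,hx) - 2\,d(x,\pi(x)),
\]
so $d(x,hx)<100\dx$ forces $d(x,H\cdot x_0)$ to be bounded by a fixed multiple of $\dx$. Thus any $x$ beyond that threshold satisfies $d(x,hx)\geq 100\dx$. For elements with $[h]\leq 16\dx$ one passes to a power $h^{n_0}$ with large translation length (using that $H$ is torsion-free so every non-trivial $h$ is hyperbolic) and absorbs the resulting constant into $\alpha$. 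None of your displacement-of-$x_0$ or minimal-translation-length reasoning enters.
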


\begin{proof}
Let $h$ be an element of $H$ and $x$ be a point in $X$.
Denote by $\pi$ a $H$-equivariant $\dx$-projection of $X$ on the $\dx$-quasi-convex subset $H \cdot x_0$ of $X$. 

When the translation length $[h] := \inf_{x \in X} d(x,hx)$ of $h$ is greater than $16\dx$, the contraction property of projections on quasi-convex sets (Proposition \ref{pcontraction}) asserts that 
\begin{equation*}
d(\pi(x),h\pi(x)) \leq 32 \dx + d(x,hx) - d(x ,\pi(x)) - d(hx,\pi(hx)) .
\end{equation*}
If $d(x,hx)< 100\dx$, this implies that $d(x,H\cdot x_0) < 132 \dx$.
Consequently, if the point $x$ is at distance at least $132 \dx$ from the quasi-convex $H\cdot x_0$, the distance between $x$ and $hx$ is greater than $100 \dx$.
\medskip

Assume now that the translation length of $h$ is less than $16 \dx$.  
According to \cite[\S 8.5]{Gromov87}, there exists an integer $n_0$ such that, if $h^-$ and $h^+$ are respectively repulsive and attractive fixed points for $h$, then $h^{n_0}$ fixes a geodesic line joining $h^-$ to $h^+$.
By taking a multiple of $n_0$ if necessary, we can assume that $[h^{n_0}]$ is greater than $16\dx$. 
Thus the previous inequalities are valid for $h^{n_0}$. 

To conclude, the proposition holds with $\alpha := (132 + 100n_0) \dx$.
\end{proof}

The following theorem was stated in \cite[5.3]{Gromov87} without proof. Later, it has been proved by I.Kapovich in the article \cite{Kapovich02} and by R.Foord \cite{Foord00}, both unpublished.

\begin{thm} \label{txhhyp}
Let $X$ be a geodesic proper hyperbolic space and $H$ be quasi-convex-cocompact group of isometries of $X$. The space $\xh$ is a hyperbolic space.
\end{thm}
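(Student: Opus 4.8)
The plan is to show that $\xh$ is hyperbolic by verifying the thin-triangles condition, importing geometry from $X$ via the approximating tree of Lemma \ref{lapproxtree} together with the contraction property of projections onto the quasi-convex orbit $H\cdot x_0$ (Proposition \ref{pcontraction}). First I would reduce to the case where $H$ is torsion-free: a quasi-convex-cocompact group has a torsion-free subgroup of finite index, and passing to it changes $\xh$ only by a finite-to-one quasi-isometry, which preserves hyperbolicity; alternatively one works directly and uses Proposition \ref{palpha} to handle torsion. Fix the base-point $\xzbar$ and recall that the convex core $\coco$ lies in a ball $\cbxz{R}$. The key geometric input is a \emph{quasi-geodesic lifting} statement: given $\xbar,\ybar\in\xh$, a geodesic $[\xbar,\ybar]$ in $\xh$ lifts to a path in $X$ that is a quasi-geodesic, with constants depending only on $\dx$ and on the quasi-convexity constant of the orbits; this is because any sub-path of the lift that strays far from the various $H$-translates of $H\cdot x_0$ projects (by Proposition \ref{pcontraction}) to something that cannot be shortened by the $H$-action, so it is already close to geodesic in $X$.

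Next I would establish thinness of a geodesic triangle $\Delta(\xbar,\ybar,\zbar)$ in $\xh$. Lift the three vertices to points $x,y,z\in X$ and lift the three sides to quasi-geodesics in $X$ joining the appropriate $H$-translates of these points; by the stability of quasi-geodesics (\cite[\S 7.2]{Gromov87}) each lifted side stays within a bounded neighbourhood of an honest geodesic in $X$, so the lifted triangle is $\delta'$-thin in $X$ for a constant $\delta'$ depending only on $\dx$ and the lifting constants. Then I would project this thinness back down: the quotient map $X\to\xh$ is $1$-Lipschitz, so if a point on one lifted side is $\delta'$-close in $X$ to the union of the other two lifted sides, its image is $\delta'$-close in $\xh$ to the union of the other two sides of $\Delta$. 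Taking care that the lifts of the three sides are chosen consistently (so that their images really are the three sides of $\Delta$, not $H$-translates landing elsewhere) gives that every geodesic triangle in $\xh$ is $\delta'$-thin, hence $\xh$ is hyperbolic.

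Where the approximating tree enters is in making the lifting/consistency bookkeeping uniform: for the triangle with one vertex $\xbar$ one applies Lemma \ref{lapproxtree} to $x$ and finitely many $H$-translates of $y$ and $z$ that are ``in play'' (those realizing the distances along the sides), collapsing the hyperbolic estimates to a tree where the branching is controlled by a single constant $k\delta$, and then transfers back. This is also how one controls that only boundedly many cosets are relevant near any segment, which is what keeps all constants depending on $\dx$ alone.

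\textbf{Main obstacle.} The delicate point is the lifting lemma: one must show that a geodesic of $\xh$ lifts to a uniform quasi-geodesic of $X$. The danger is a side that fellow-travels a long succession of distinct $H$-translates of the orbit $H\cdot x_0$ and ``spirals'', so that the naive lift backtracks. Ruling this out requires the full strength of the contraction property, Proposition \ref{pcontraction}: whenever the lift enters the $C$-neighbourhood of a translate $gH\cdot x_0$ and leaves it, the entry and exit projections to $gH\cdot x_0$ are close (otherwise one could shorten in $\xh$), so the lift spends only bounded time near each translate and the translates it meets are linearly ordered along the path; quantifying this, and combining it with the torsion handling of Proposition \ref{palpha} (which guarantees orbit points are genuinely spread out, so distinct cosets near a segment are distinct as subsets at definite distance), is the technical heart of the argument. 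Once the lifting lemma is in hand with constants depending only on $\dx$, the rest is a routine transfer of $\delta$-thinness through the $1$-Lipschitz quotient map.
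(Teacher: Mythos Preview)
Your approach differs from the paper's: you aim for thin triangles via a path-lifting argument, whereas the paper verifies the four-point Gromov-product inequality directly by lifting only \emph{points}. Moreover, your announced ``main obstacle'' is not one. Once $H$ is torsion-free the action is free and properly discontinuous on the proper space $X$, so $X\to\xh$ is a genuine covering and a local isometry; a geodesic $\gamma\subset\xh$ lifts to a path $\tilde\gamma$ of the same length, and since the quotient is $1$-Lipschitz the endpoints of $\tilde\gamma$ are at $X$-distance exactly $\ell(\tilde\gamma)$. Hence $\tilde\gamma$ is an honest geodesic in $X$; no spiraling or backtracking is possible, and Proposition~\ref{pcontraction} is not needed here.

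The genuine gap is the step you label ``consistency''. Lifting the three sides of $\Delta(\xbar,\ybar,\zbar)$ with matched endpoints produces geodesics $[x,y_1]$, $[y_1,z_1]$, $[z_1,hx]$ with $h\in H$ typically nontrivial: the lifted figure is an open geodesic quadrilateral, not a triangle. Thinness in $X$ then only says that a point $p\in[x,y_1]$ lies $2\dx$-close to $[y_1,z_1]\cup[z_1,hx]\cup[hx,x]$, and the phantom side $[hx,x]$ does not project into $\Delta$; its image is a loop at $\xbar$ that can stray arbitrarily far from $\xbar$ in $\xh$, because the orbit $H\cdot x$ is \emph{not} uniformly quasi-convex as $x$ leaves $\wch$ (already for $H=\mathbb{Z}$ on $\mathbb{H}^2$ the quasi-convexity constant of $H\cdot x$ grows like $d(x,\text{axis})$). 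Nothing in your sketch bounds the contribution of this fourth side, and the approximating-tree remark does not address it.

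The paper avoids the closure problem entirely. It chooses a lift $x$ with $d(x_0,x)=d(\xzbar,\xbar)$ and then lifts $y,z$ so that $d(x,y)=d(\xbar,\ybar)$, $d(x,z)=d(\xbar,\zbar)$. The whole proof reduces to bounding the single distortion $d(x_0,y)-d(\xzbar,\ybar)$ (and the analogue for $z$); this is exactly where Lemma~\ref{lapproxtree} and Proposition~\ref{pcontraction} are used, applied to the projection onto $(H\cdot x_0)^{+\rho}$, giving $d(x_0,y)\le d(\xzbar,\ybar)+2\rho+65\dx$. The Gromov-product inequality in $X$ then transfers to $\xh$ with $\dxh=2\rho+65\dx$. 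If you want to salvage the thin-triangle route, you must either bound $d(x,hx)$ in terms of the triangle, or show that the projection to $\xh$ of $[x,hx]$ stays uniformly close to the two sides through $\xbar$; either requires essentially the same contraction estimate the paper carries out, so you may as well run the Gromov-product argument directly.
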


\begin{proof}
We intend to find a constant $\dxh$ such that every point $\xbar$, $\ybar$ and $\zbar$ in $\xh$ satisfy
\begin{equation}
\label{eqnhyp}
\tag{$\star$}
\langle \ybar,\zbar \rangle_{\overline{x_0}} \geq \min \{ \langle \ybar,\xbar \rangle_{\overline{x_0}} , \langle \xbar,\zbar \rangle_{\overline{x_0}} \} - \dxh .
\end{equation}
To do so, set $\rho := diam (\coco) + \alpha+ \dx$ where $diam(\coco)$ is the diameter of the convex core $\coco$ and $\alpha$ is the constant arising in Proposition \ref{palpha}. Fix three points $\xbar$, $\ybar$ and $\zbar$ in $\xh$.

Firstly, assume that $\min \{ \langle \ybar,\xbar \rangle_{\overline{x_0}} , \langle \xbar,\zbar \rangle_{\overline{x_0}} \} > \rho$. Let $x$ be a lift of $\xbar$ in $X$ satisfying $d(x_0,x)=d(\xzbar,\xbar)$. Denote by $y$ and $z$ respective lifts of $\ybar$ and $\zbar$ such that $d(x,y)=d(\xbar,\ybar)$ and $d(x,z)=d(\xbar,\zbar)$. 
By definition of the Gromov product, these points satisfy
\begin{equation*}
\langle y,x\rangle_{x_0}  \geq \langle \ybar,\xbar \rangle_{\overline{x_0}}  > \rho
\qquad \textrm{ and } \qquad
\langle x,z\rangle_{x_0}  \geq \langle \xbar,\zbar \rangle_{\overline{x_0}} > \rho .
\end{equation*}
Then, as $X$ is $\dx$-hyperbolic, the following inequality holds:
\begin{equation*}
\langle y,z \rangle_{x_0} \geq \min \{ \langle \ybar,\xbar \rangle_{\overline{x_0}} , \langle \xbar,\zbar \rangle_{\overline{x_0}} \} - \dx .
\end{equation*}
Since $\rho \geq \dx$, the set $(H \cdot x_0)^{+\rho}$ is $2\dx$-quasi-convex and there exists a $\dx$-projection $\pi \co X \to (H \cdot x_0)^{+\rho}$ on this set. 

Let $x_1$ denotes the point on a geodesic segment $[x_0,x]$ at distance $\rho$ from $x_0$. Apply the approximating tree lemma \ref{lapproxtree} to a geodesic triangle $\Delta$ with vertices $x$, $x_1$ and $\pi(x)$: there exists a simplicial tree $T$ endowed with a simplicial metric $d_T$ such that (for convenience, we denote by the same letter points in $\Delta$ and their image in $T$) for all $u,v \in \Delta$, $d(u,v) -4\dx \leq d_T(u,v) \leq d(u,v)$ and  $d_T(x_1,\pi(x)) = d(x_1,\pi(x))$.

If $x'$ denotes a projection of $x$ on the segment $[x_1,\pi(x)]$ in $T$, the approximating tree lemma gives:
\begin{align*}
d_T(x,x') & \geq d(x,x') - 4 \dx \\
& \geq d(x,(H\cdot x_0)^{+\rho}) - 6\dx \, \textrm{ since } (H\cdot x_0)^{+\rho} \textrm{ is } 2\dx \textrm{-quasi-convex,} \\
& \geq d(x,\pi(x)) - 7\dx \, \textrm{ since } \pi \textrm{ is a } \dx \textrm{-projection on } (H\cdot x_0)^{+\rho} , \\
& \geq d_T(x,\pi(x)) - 7\dx.
\end{align*}
And so $d_T(\pi(x),x') = d_T(x,\pi(x)) - d_T(x,x') \leq 7\dx$.
A similar computation with $x_1$ instead of $\pi(x)$ also gives $d_T(x_1,x') = d_T(x,x_1) - d_T(x,x') \leq 7\dx$.
Then $d(x_1,\pi(x)) = d_T(x_1,\pi(x)) \leq 14\dx$ and so
\begin{equation*}
d(x_0,\pi(x)) \leq \rho + 14\dx.
\end{equation*}
The contraction property of projections on quasi-convex sets indicates that
\begin{itemize}
\item either $d(\pi(x),\pi(y)) \leq 18\dx $ and the above inequality induces  $d(x_0,\pi(y)) \leq \rho + 32\dx$.
\item either $d(\pi(x),\pi(y)) \leq 36\dx + d(x,y) - d(x,\pi(x)) - d(y,\pi(y))$ which is equivalent to 
\begin{align*}
d(x,y) & \geq d(\pi(y),x_0) - d(x_0, \pi(x)) + d(x,x_0) - d(x_0,\pi(x)) + d(y,\pi(y)) - 36\dx \\
& \geq d(\pi(y),x_0) + d(x,x_0) + d(y,\pi(y)) - 2\rho - 64\dx .
\end{align*}
The choice of the lifts $y$, $z$ and the definition of a $\dx$-projection imply that
\begin{equation*}
d(\xbar,\ybar) \geq d(\pi(y),x_0) + d(\xbar,\xzbar) + d(\ybar,\xzbar) - 3\rho - 64\dx .
\end{equation*}
Then the triangular inequality for $d(\xbar,\ybar)$ induces $d(x_0,\pi(y)) \leq 3\rho + 64\dx $. 
\end{itemize}
As $\pi$ is a $\dx$-projection, the point $y$ satisfies
\begin{equation*}
d(\ybar,\xzbar) - \rho \leq d(y,\pi(y)) \leq d(\ybar,\xzbar) - \rho + \dx .
\end{equation*}
In every instance, we have 
\begin{align*}
d(x_0,y) & \leq d(x_0, \pi(y)) + d(\pi(y),y) \\
& \leq  3\rho + 64\dx + d(\xzbar,\ybar) -\rho + \dx .
\end{align*}
Replacing $y$ by $z$ in the previous lines gives $d(x_0,z) \leq 2\rho + 65\dx + d(\xzbar,\zbar)$. Then the Gromov product of $y$ and $z$ at $x_0$ is equal to 
\begin{align*}
\langle y,z \rangle_{x_0} & = \dfrac{1}{2} \left( d(x_0,y) + d(x_0,z) - d(y,z) \right)\\
& \leq \dfrac{1}{2} \left( d(\xzbar,\ybar) + d(\xzbar,\zbar) - d(\ybar,\zbar) \right) + 2\rho + 65\dx \\
& \leq  \langle \ybar,\zbar \rangle_{\overline{x_0}} +2\rho + 65\dx .
\end{align*}
This implies that if $\min \{ \langle \ybar,\xbar \rangle_{\overline{x_0}} , \langle \xbar,\zbar \rangle_{\overline{x_0}} \} > \rho$, the points $\xbar$, $\ybar$ and $\zbar$ satisfy the inequality \eqref{eqnhyp}.

Furthermore, if $\min \{ \langle \ybar,\xbar \rangle_{\overline{x_0}} , \langle \xbar,\zbar \rangle_{\overline{x_0}} \} \leq \rho$ then $\min \{ \langle \ybar,\xbar \rangle_{\overline{x_0}} , \langle \xbar,\zbar \rangle_{\overline{x_0}} \}  - (2\rho + 65\dx) < 0$. As the Gromov product $\langle \ybar,\zbar \rangle_{\overline{x_0}}$ is always positive, the inequality \eqref{eqnhyp} is still satisfied.
Therefore, we can conclude that the space $\xh$ is $\dxh$-hyperbolic with $\dxh := 2~ (diam (\coco) + \alpha + \varepsilon) + 65\dx$. 
\end{proof}

\subsection{Covering and geodesic extension property}

\begin{prop}\label{pcover}
The space $X \setminus{(H\cdot x_0)^{+\dxh}}$ is a covering space of the complement of the closed ball $\cbxz{\dxh}$ in $\xh$.
\end{prop}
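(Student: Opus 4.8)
The plan is to show that the quotient map $p \co X \to X/H$ restricts to a covering map on the relevant subsets, and the key geometric input is Proposition \ref{palpha}: points far from the orbit $H\cdot x_0$ are moved a definite distance by every nontrivial element of $H$. First I would observe that $p$ is surjective and continuous onto $X/H$, and that its restriction to $X \setminus (H\cdot x_0)^{+\dxh}$ lands inside the complement of $\cbxz{\dxh}$ in $X/H$ (this is immediate from the definition of $d_{X/H}$, since a lift $x$ with $d(x, H\cdot x_0) > \dxh$ satisfies $d_{X/H}(\xbar,\xzbar) > \dxh$, and conversely). So the map is well-defined onto the stated base space; I would also note it is surjective onto that complement, since any $\ybar$ with $d(\xzbar,\ybar) > \dxh$ has a lift at distance exactly $d(\xzbar,\ybar) > \dxh$ from $H\cdot x_0$.

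Next I would produce evenly-covered neighbourhoods. Given $\ybar$ in the complement of $\cbxz{\dxh}$, pick a lift $y \in X$ with $d(y, H\cdot x_0) > \dxh$. Since $\dxh \geq \alpha$ (this holds by the explicit formula $\dxh = 2(diam(\coco)+\alpha+\varepsilon) + 65\dx \geq \alpha$), Proposition \ref{palpha} applies to $y$: for every nontrivial $h \in H$ we have $d(y, hy) \geq 100\dx$. Hence the $H$-orbit of $y$ is $50\dx$-separated; more precisely, for distinct $h_1, h_2 \in H$ the balls $B(h_1 y, r)$ and $B(h_2 y, r)$ are disjoint once $r < 50\dx$. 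I would then fix a radius, say $r := \min\{\dx, (d(y,H\cdot x_0) - \dxh)/2\} > 0$, chosen small enough that $B(y,r)$ still lies in $X\setminus(H\cdot x_0)^{+\dxh}$ (using that $H\cdot x_0$ is closed and $d(y, H\cdot x_0) > \dxh$), and consider $V := p(B(y,r))$, an open neighbourhood of $\ybar$ in the complement of $\cbxz{\dxh}$. The preimage $p^{-1}(V)$ is the $H$-orbit of $B(y,r)$, which is a disjoint union $\bigsqcup_{h\in H} hB(y,r)$ of open sets each mapped homeomorphically onto $V$ by $p$ (the restriction of $p$ to $hB(y,r)$ is injective because any two points of $hB(y,r)$ identified by $p$ would give, after translating by $h^{-1}$, two points of $B(y,r)$ at distance $< 2r \leq 2\dx < 100\dx$ in the same $H$-orbit, contradicting Proposition \ref{palpha}; it is an open map and a homeomorphism by the usual argument). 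Crucially one must check $p^{-1}(V)$ sits inside $X\setminus(H\cdot x_0)^{+\dxh}$: since $H\cdot x_0$ is $H$-invariant, $d(hz, H\cdot x_0) = d(z, H\cdot x_0)$ for all $z$, so every point of $hB(y,r)$ is at distance $> \dxh$ from the orbit.

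The main obstacle is the injectivity of $p$ on each sheet $hB(y,r)$, equivalently the statement that no two distinct points of $B(y,r)$ lie in a common $H$-orbit — but this is exactly what the uniform displacement bound of Proposition \ref{palpha} buys us, provided the radius $r$ is chosen below $50\dx$ (and the hypothesis from the previous subsection that $H$ is torsion-free, which is already in force, makes every nontrivial $h$ amenable to that proposition). A secondary point requiring care is that we are working with the complement of a \emph{closed} ball upstairs and downstairs, so one should double-check that the chosen $r$ keeps the whole evenly-covered chart strictly outside $(H\cdot x_0)^{+\dxh}$; shrinking $r$ as above handles this. Assembling these local trivializations over all $\ybar$ in the complement of $\cbxz{\dxh}$ shows $p$ is a covering map onto that space, which is the assertion of Proposition \ref{pcover}.
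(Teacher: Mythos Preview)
Your proof is correct and follows essentially the same approach as the paper: both use the displacement bound of Proposition \ref{palpha} (together with $\dxh \geq \alpha$) to show that the $H$-action on $X \setminus (H\cdot x_0)^{+\dxh}$ is a covering space action, then invoke the standard fact that the quotient map of such an action is a covering. The paper simply cites Hatcher for this last step and identifies $(H\cdot x_0)^{+\dxh}/H$ with $\cbxz{\dxh}$, whereas you unpack the construction of evenly-covered neighbourhoods explicitly, but the substance is the same.
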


\begin{proof}
In light of Proposition \ref{palpha} and Theorem \ref{txhhyp}, for every element $h\in H$ and every point $x \in X$ with ${d(x,H\cdot x_0) > \dxh}$, there exists $d >0$ such that the closed ball $\cbxz{d}$ never intersects its translated $h \cdot \cbxz{d}$.
By a classical result of topology (see \cite[1.40]{Hatcher}), the quotient map from $X \setminus{(H\cdot x_0)^{+\dxh}}$ to $(X \setminus{(H\cdot x_0)^{+\dxh}})/H$ is then a covering map. 
Then, by definition of the distance in $\xh$, the quotient $(H\cdot x_0)^{+\dxh}/H$ is equal to the ball $\cbxz{\dxh}$. Therefore, $X \setminus{(H\cdot x_0)^{+\dxh}}$ is locally homeomorphic to the complement of $\cbxz{\dxh}$ in $\xh$.
\end{proof}

This proposition allows to prove that the quotient space $\xh$ also satisfies the geodesic extension property of Proposition \ref{geodextprop}. 

\begin{cor}\label{cgeodext}
If $\dx$ is also a constant of geodesic extension for $X$, there exists $\mu \geq 0$ such that for every point $\xbar$ in $\xh$, there is a geodesic ray arising from $\xzbar$ passing within distance $\mu$ from $x$. 
\end{cor}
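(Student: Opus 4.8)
The plan is to split according to whether $\xbar$ is close to or far from the convex core $\coco$, the second case carrying all the content. Throughout I use that $\xh$ is a proper geodesic hyperbolic space: hyperbolicity is Theorem \ref{txhhyp}, and properness holds because every closed ball $\cbxz{R}$ is the image of $(H\cdot x_0)^{+R}$, which lies in a bounded neighbourhood of $\wch$ on which $H$ acts cocompactly. I may assume $\xh$ is unbounded — the relevant case, since $\Lambda H=\partial X$ forces the quotient to be compact — so that $\xh$ carries geodesic rays issuing from $\xzbar$, obtained as Arzel\`a--Ascoli limits of geodesics $[\xzbar,\bar y_n]$ with $d(\xzbar,\bar y_n)\to\infty$. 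Fix $\rho_0$ with $\coco\subseteq\cbxz{\rho_0}$, equivalently $H\cdot x_0$ is $\rho_0$-dense in $\wch$, and set $R_1:=\rho_0+14\dx$. If $d(\xzbar,\xbar)\le R_1$ there is nothing to do: any geodesic ray $\bar r$ from $\xzbar$ passes within $d(\xzbar,\xbar)\le R_1$ of $\xbar$, because $\bar r(0)=\xzbar$.

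Now suppose $d(\xzbar,\xbar)>R_1$. First I would pick a lift $\hat x\in X$ with $d(x_0,\hat x)=d(\xzbar,\xbar)$; then $d(\hat x,H\cdot x_0)=d(\xzbar,\xbar)$, so $d(\hat x,\wch)\ge d(\xzbar,\xbar)-\rho_0$. Using the geodesic extension property of $X$ (Proposition \ref{geodextprop} with constant $\dx$, together with Remark \ref{remarkgeodextprop}) I extend $[x_0,\hat x]$ to a geodesic ray $c\co[0,\infty)\to X$ with $c(0)=x_0$ and $d(\hat x,c(r))\le 4\dx$, where $r:=d(x_0,\hat x)$; replacing $\hat x$ by $c(r)$ (a modification absorbed into the final constant) I may assume $\hat x=c(r)$, so that $d(\hat x,\wch)>R_1-\rho_0-4\dx=10\dx$. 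Projecting, $\bar c:=q\circ c$ (with $q\co X\to\xh$ the quotient map) is a $1$-Lipschitz path from $\xzbar$ whose value at $r$ is within $4\dx$ of $\xbar$. The heart of the argument — and the step I expect to be the main obstacle, because the estimate must be \emph{uniform} in $\xbar$ — is the claim that $\bar c$ is a $(1,6\dx)$-quasigeodesic ray. For the lower bound I argue as follows: for $t\ge r$ and $h\in H$, the point $\hat x=c(r)$ lies on the geodesic $c([0,t])$ from $x_0$ to $c(t)$, so in a $\dx$-thin triangle on $x_0$, $hx_0$, $c(t)$ it is within $\dx$ of $[x_0,hx_0]\cup[hx_0,c(t)]$; since $[x_0,hx_0]\subseteq\wch^{+8\dx}$ (both endpoints lie in the $8\dx$-quasiconvex set $\wch$) while $d(\hat x,\wch)>9\dx$, the point $\hat x$ must lie within $\dx$ of $[hx_0,c(t)]$. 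Hence $d(hx_0,c(t))\ge d(hx_0,\hat x)+(t-r)-2\dx$, and taking the infimum over $h\in H$ (the case $h=1$ being harmless) gives $d(\xzbar,\bar c(t))\ge d(\xzbar,\bar c(r))+(t-r)-2\dx$ for $t\ge r$. Combined with $d(\xzbar,\bar c(r))\ge r-4\dx$ and the trivial bound $d(\xzbar,\bar c(t))\ge d(\xzbar,\bar c(r))-(r-t)$ for $t\le r$, this yields $d(\xzbar,\bar c(t))\ge t-6\dx$ for all $t$, proving the claim.

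To conclude, I would invoke the stability of quasi-geodesics in the $\dxh$-hyperbolic proper space $\xh$ (the stability theorem \cite[\S 7.2]{Gromov87}): there is a genuine geodesic ray $\bar r$ from $\xzbar$ — for instance the Arzel\`a--Ascoli limit of the geodesics $[\xzbar,\bar c(n)]$ — lying within Hausdorff distance $D=D(\dxh)$ of $\bar c$; then $\bar c(r)$, hence $\xbar$, lies within $D+4\dx$ of $\bar r$. The corollary follows with $\mu:=\max\{R_1,\,D+4\dx\}$. As an alternative to the thin-triangle computation in the previous paragraph, one may route the estimate through Proposition \ref{pcover}: away from the convex core the quotient map $q$ is a local isometry, so $\bar c$ is there an $L$-local geodesic; but one still needs a uniform control of the type above to handle the part of $\bar c$ near the core and to extract uniform quasigeodesic constants.
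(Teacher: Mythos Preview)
Your argument is correct and follows a genuinely different route from the paper's. The paper leans on Proposition~\ref{pcover}: since the quotient map is a covering away from $\cbxz{\dxh}$, it treats the projection $\overline{c}|_{[\dxh+1,\infty)}$ as a geodesic ray in $\xh$, completes it to an ideal triangle with vertices $\xzbar$, $\overline{c(\dxh+1)}$ and the common point at infinity (the third side being a genuine geodesic ray from $\xzbar$ asymptotic to $\overline{c}$), and reads off $\mu=4\dxh+\dx$ from the $4\dxh$-thinness of ideal triangles. You bypass Proposition~\ref{pcover} entirely: the thin-triangle estimate in $X$ on the triple $x_0,\,hx_0,\,c(t)$, pivoting at $\hat x=c(r)$, shows directly that $q\circ c$ is a $(1,6\dx)$-quasigeodesic in $\xh$ --- the constants depending only on $\dx$, not on $\dxh$ --- and you then conclude via the stability of quasigeodesics. (Your check that $d(\xzbar,\bar c(t))\ge t-6\dx$ suffices for the full quasigeodesic inequality because $\bar c$ is $1$-Lipschitz: for $s<t$ one has $t-6\dx\le d(\xzbar,\bar c(t))\le s+d(\bar c(s),\bar c(t))$.) The paper's approach is shorter once the covering structure is available; yours is more self-contained and delivers as a byproduct the independently useful quantitative statement that radial geodesics in $X$ project to uniform $(1,6\dx)$-quasigeodesics in $\xh$. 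Your closing remark about the local-geodesic alternative via Proposition~\ref{pcover} is essentially the paper's own strategy.
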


\begin{proof}
By Theorem \ref{txhhyp}, there exists some constant $\dxh$ such that $\xh$ is $\dxh$-hyperbolic. Consider a point $\xbar$ in $\xh$ in the complement of $B(\xzbar,\dxh)$ and denote by $x$ a lift of $\xbar$ in $X$ such that $d(x_0,x)=d(\xzbar,\xbar)$.
By assumption, there exists a geodesic ray $c \co \mathbb{R}_{\geq 0} \to X$ arising from $x_0$ passing within distance $\dx$ from $x$.

Using the covering map described in Proposition \ref{pcover}, the ray $c$ furnishes a geodesic ray $\overline{c} \co [\dxh+1, +\infty) \to \xh$ arising from the point $\overline{c(\dxh+1)}$ and passing within distance $\dx$ from $\xbar$. This ray together with a geodesic ray arising from $\xzbar$ with same endpoint at infinity and a geodesic segment $[\xzbar,\overline{c(\dxh+1)}]$ form an ideal triangle in $\xh$.
As ideal triangles are $4\dxh$-thin in $\xh$, the point $\xbar$ within distance $4\dxh+\dx$ from a geodesic ray arising from $\xzbar$.
So $4 \dxh + \dx$ is a suitable constant of geodesic extension for $\xh$.
\end{proof}

By replacing the constant $\dxh$ by $4\dxh+\dx$ if necessary, we can always assume that $\xh$ satisfies the geodesic extension property with $\dxh$.

\section{Ends of quotient spaces}
\label{s_ends_quotient_spaces}

\subsection{Boundary}

By Theorem \ref{txhhyp}, the quotient space $\xh$ is hyperbolic. This means that we can use the among of results on hyperbolic spaces to obtain informations on its ends. In particular, the following result relates the set of ends to the boundary of hyperbolic spaces. 

\begin{prop}\cite[5.17]{GdH90}
Let $X$ be a geodesic proper hyperbolic space. 
The natural map from the boundary $\partial X$ of $X$ to the set of ends $Ends(X)$ of $X$ is continuous and surjective and the fibres are connected components of $\partial X$.  \qed
\end{prop}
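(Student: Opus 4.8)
The plan is to produce the natural map $\Phi\co\partial X\to Ends(X)$ explicitly and then check, in turn, that it is well defined, surjective, continuous, and that its fibres are the connected components of $\partial X$. Throughout one uses that $X$ is proper, so that $\overline{X}=X\cup\partial X$ is compact metrizable and Arzel\`a--Ascoli applies, and that $X$ is hyperbolic, so that two geodesic rays from $x_0$ with the same endpoint in $\partial X$ stay within bounded distance of each other and the geodesic rays $[x_0,\xi_k)$ converge uniformly on compacts whenever $\xi_k\to\xi$ in $\partial X$. The map $\Phi$ sends $\xi$ to the end represented by a geodesic ray $c=[x_0,\xi)$: this ray is proper since $d(x_0,c(t))=t\to\infty$, and its end does not depend on the chosen representative because two geodesic rays from $x_0$ to $\xi$ fellow-travel and hence eventually lie in the same component of the complement of any compact set.

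For surjectivity, represent a given end $e$ by a proper ray $r\co[0,\infty)\to X$; since $d(x_0,r(n))\to\infty$, the geodesic segments $[x_0,r(n)]$ have lengths tending to infinity, so a subsequence converges uniformly on compacts to a genuine geodesic ray $c$ from $x_0$, with some endpoint $\xi\in\partial X$. To see that $c$ lies in $e$, fix a compact $K\subseteq\bar B(x_0,R)$; properness of $r$ gives $M$ with $r([M,\infty))\cap\bar B(x_0,R+1)=\emptyset$, and for $N>R$, $T$ large, and $n$ large one splices together the arc $c([N,T])$, a short geodesic from $c(T)$ to the nearby point $[x_0,r(n)](T)$, the terminal arc of $[x_0,r(n)]$ from $[x_0,r(n)](T)$ to $r(n)$ (which avoids $\bar B(x_0,R)$, being a sub-geodesic of a geodesic from $x_0$), and the arc $r([M,n])$; the result is a path in $X\setminus K$ joining a tail of $c$ to a tail of $r$. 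As $K$ was arbitrary, $\Phi(\xi)=e$. For continuity, let $\xi_k\to\xi$ and fix $\bar B(x_0,R)$; the tails $[x_0,\xi)([R+2,\infty))$ and $[x_0,\xi_k)([R+2,\infty))$ avoid $\bar B(x_0,R)$, and for $k$ large the point $[x_0,\xi_k)(R+2)$ lies within distance $1$ of $[x_0,\xi)(R+2)$, itself at distance at least $2$ from $\bar B(x_0,R)$; joining them by a short geodesic inside $X\setminus\bar B(x_0,R)$ places the two tails in a common component of $X\setminus\bar B(x_0,R)$, so $\Phi(\xi_k)$ eventually lies in the basic neighbourhood of $\Phi(\xi)$ determined by $\bar B(x_0,R)$. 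These neighbourhoods form a basis at $\Phi(\xi)$, hence $\Phi$ is continuous.

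For the fibres, to each $R\geq0$ and each component $C$ of $X\setminus\bar B(x_0,R)$ associate $U_R(C)=\{\xi\in\partial X: [x_0,\xi)\text{ is eventually contained in }C\}$. These sets partition $\partial X$, the same fellow-travelling argument as above shows each is open, and by compactness of $\partial X$ only finitely many are nonempty, so each $U_R(C)$ is clopen. If $\Phi(\xi)\neq\Phi(\xi')$, then some compact set, which we enlarge to a ball $\bar B(x_0,R)$, separates the tails of $[x_0,\xi)$ and $[x_0,\xi')$ into distinct components $C\neq C'$, so $\xi\in U_R(C)$ and $\xi'\in U_R(C')$ are separated by disjoint clopen sets; hence $\Phi$ is constant on each connected component of $\partial X$, and so every fibre is a union of connected components. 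One then checks $U_R(C)=\overline{C}\cap\partial X$, where $\overline{C}$ is the closure of $C$ in $\overline{X}$: a boundary point of $\overline{C}$ is the limit of a sequence $x_n\in C$, the terminal arcs of $[x_0,x_n]$ lie in $C$ and converge to a tail of the limiting ray, which therefore lies in $C$. Now fix an end $e$, a point $\xi_0\in\Phi^{-1}(e)$, and for each $R$ the component $C_R$ of $X\setminus\bar B(x_0,R)$ containing $[x_0,\xi_0)(R+1)$; the $C_R$, hence the closures $\overline{C_R}$, are nested decreasing, and the $\overline{C_R}$ are compact and connected, so $\bigcap_R\overline{C_R}$ is connected. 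Since $\Phi^{-1}(e)=\bigcap_R U_R(C_R)=\bigl(\bigcap_R\overline{C_R}\bigr)\cap\partial X$ and no point of $X$ lies in every $\overline{C_R}$, we get $\Phi^{-1}(e)=\bigcap_R\overline{C_R}$, which is connected. A connected set that is a union of connected components is a single component, so each fibre is a connected component of $\partial X$; conversely every component lies in a fibre and, being the whole of it by surjectivity, is such a fibre.

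I expect the connectedness of the fibres to be the main obstacle. The clopen sets $U_R(C)$ dispatch ``$\Phi$ is constant on components'' almost formally, but upgrading ``a fibre is a union of components'' to ``a fibre is one component'' rests on the identification $U_R(C)=\overline{C}\cap\partial X$, which exhibits each fibre as a nested intersection of continua in $\overline{X}$; the non-trivial inclusion there, obtained by pushing the convergence $[x_0,x_n]\to[x_0,\xi)$ through the terminal arcs of the segments, is the real content.
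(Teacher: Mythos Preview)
The paper does not prove this proposition at all: it is quoted from \cite[5.17]{GdH90} and closed immediately with \qed, so there is no argument in the paper to compare against. Your proof is correct and self-contained; the construction of $\Phi$, the Arzel\`a--Ascoli surjectivity argument, the continuity via uniform convergence of rays on compacts, and the identification of fibres with connected components through the clopen sets $U_R(C)=\overline{C}\cap\partial X$ and the nested-continua intersection $\bigcap_R\overline{C_R}$ all go through as written. The only small point worth making explicit is that $U_R(C)$ is independent of the choice of geodesic ray $[x_0,\xi)$: two such rays stay within a uniform constant of one another, so for $t$ large enough the short segment joining $c(t)$ to $c'(t)$ avoids $\bar B(x_0,R)$, forcing both tails into the same component.
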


Therefore, studying the boundary of the quotient space can give us informations on its space of ends. 

\begin{prop} \label{pcovmap}
If $H$ acts properly discontinuously on $X$ then the quotient map $\partial X \setminus{\Lambda H} \to \partial \xh$ is a covering map.
\end{prop}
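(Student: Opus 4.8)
The plan is to exhibit a covering action and invoke the same classical topology criterion used in Proposition \ref{pcover}, but now on the boundary rather than on $X$ itself. First I would recall that $H$ acts on $X \cup \partial X$ by homeomorphisms, and that this action preserves $\partial X$; moreover the action on $\partial X \setminus \Lambda H$ is the relevant restriction. The key point is that this restricted action is properly discontinuous \emph{and free} (since $H$ is torsion-free, as assumed from the start of Section \ref{s_quotient_spaces}; a nontrivial element of $H$ fixes only its two endpoints $h^\pm \in \Lambda H$ on $\partial X$, which are excluded). So the heart of the argument is: (i) identify the right quotient, i.e. check that $\partial X \setminus \Lambda H$ maps onto $\partial(X/H)$ and that the fibres are exactly the $H$-orbits; and (ii) verify that $H$ acts on $\partial X \setminus \Lambda H$ properly discontinuously so that $\partial X \setminus \Lambda H \to (\partial X \setminus \Lambda H)/H$ is a covering map by \cite[1.40]{Hatcher} (or its non-free version), and then match this quotient with $\partial(X/H)$.

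For step (i), I would argue geometrically. A point $\xi \in \partial X \setminus \Lambda H$ is represented by a geodesic ray $c$ from $x_0$; since $\xi \notin \Lambda H$, the ray $c$ eventually leaves every neighbourhood $(H\cdot x_0)^{+R}$ and in fact, using quasi-convex-cocompactness and the contraction property (Proposition \ref{pcontraction}), the image $c([T,\infty))$ projects to a bounded set in $H\cdot x_0$; hence a tail of $c$ lies in $X \setminus (H\cdot x_0)^{+\dxh}$, where by Proposition \ref{pcover} the quotient map is a covering. A tail of $c$ therefore descends to a proper ray in $X/H$, defining a point of $\partial(X/H)$; conversely a geodesic ray in $X/H$ lifts (through the covering of Proposition \ref{pcover}, after it enters the complement of the ball) to a ray in $X$ whose endpoint is not in $\Lambda H$. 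Two rays in $X \setminus (H\cdot x_0)^{+\dxh}$ have the same endpoint in $X/H$ iff they are asymptotic in $X/H$; pulling back through the covering, this happens iff their lifts are asymptotic to a common $H$-translate, i.e. iff their endpoints in $\partial X$ lie in the same $H$-orbit. This identifies $\partial(X/H)$ with $(\partial X \setminus \Lambda H)/H$.

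For step (ii), properly discontinuity of $H$ on $\partial X \setminus \Lambda H$: given $\xi \in \partial X \setminus \Lambda H$, choose a point $x$ on a geodesic ray to $\xi$ far enough out that $x \notin (H\cdot x_0)^{+\dxh}$; by Proposition \ref{pcover} there is a ball $\bar B(x,d)$ with $h \bar B(x,d) \cap \bar B(x,d) = \emptyset$ for all $h \neq 1$. The shadow (in the sense of Definition \ref{d_sdw}, or simply the set of boundary points of rays through $\bar B(x,d)$) of this ball is an open neighbourhood $U$ of $\xi$ in $\partial X$, contained in $\partial X \setminus \Lambda H$ after shrinking, with $hU \cap U = \emptyset$ for $h \neq 1$ — because two rays both passing near $x$ and near $hx$ would force $x$ and $hx$ to be within a hyperbolicity-controlled distance, contradicting the ball-separation. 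Thus the $H$-action is properly discontinuous (indeed wandering), and since it is also free, \cite[1.40]{Hatcher} gives that $\partial X \setminus \Lambda H \to (\partial X \setminus \Lambda H)/H \cong \partial(X/H)$ is a covering map.

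The main obstacle I anticipate is step (i), specifically showing cleanly that a ray to a point outside $\Lambda H$ eventually escapes the neighbourhood $(H\cdot x_0)^{+\dxh}$ of the orbit and stays in the covering region of Proposition \ref{pcover}: this requires the quasi-convex-cocompactness of $H$ and a contraction estimate to rule out the ray returning infinitely often near the orbit. Once that escape property is in hand, the identification of the quotient and the covering criterion are routine applications of the setup already built in Section \ref{s_quotient_spaces}.
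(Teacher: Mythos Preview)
Your skeleton matches the paper's exactly: show the $H$-action on $\partial X\setminus\Lambda H$ is free (torsion-freeness) and properly discontinuous, apply the Hatcher criterion to get a covering map onto the orbit space, then identify that orbit space with $\partial(X/H)$. The one substantive difference is in how proper discontinuity on the boundary is obtained. The paper does not argue this by hand; it simply invokes a theorem of Coornaert \cite{Coornaert89} stating that a properly discontinuous action on $X$ restricts to a properly discontinuous action on $\partial X\setminus\Lambda H$, and then appeals to \cite[I.3.Ex.23]{Hatcher}. Your shadow argument in step~(ii) is aiming at exactly this statement, but as written it is not quite complete: from $\xi\in U\cap hU$ you get one ray from $x_0$ through $\bar B(x,d)$ and another ray from $hx_0$ through $\bar B(hx,d)$, both asymptotic to $\xi$; concluding that $x$ and $hx$ are close requires controlling where along these rays the passages occur, which needs a further thin-triangle estimate. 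It can be made to work, but the cleaner route is the citation.

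Conversely, for step~(i) the paper simply asserts that the identification $(\partial X\setminus\Lambda H)/H=\partial(X/H)$ ``arises naturally'' with no further argument, whereas you actually sketch it via lifting and projecting rays through the covering of Proposition~\ref{pcover}. Your concern that a ray to $\xi\notin\Lambda H$ must eventually escape $(H\cdot x_0)^{+\dxh}$ is exactly the point that deserves justification, and your use of the contraction property to bound the projection of a ray tail is the right mechanism. So on this half your treatment is in fact more complete than the paper's.
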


\begin{proof}
According to the theorem of M.Coornaert proven in \cite{Coornaert89}, if the group $H$ acts properly discontinuously on $X$, it also acts properly discontinuously on $\partial X \setminus{\Lambda H}$. Moreover, as $H$ is torsion free, its action is also free. This implies that for every point $u \in \partial X \setminus{\Lambda H}$, there exists an open set $U$ containing $u$ such that for every non trivial element $h \in H$, $U$ does not intersect $h\cdot U$. 
By a classical result of topology (see \cite[I.3.Ex.23]{Hatcher}), it follows that the quotient map $\partial X \setminus{\Lambda H} \to (\partial X \setminus{\Lambda H})/H$ is a covering map.
The equality $(\partial X \setminus{\Lambda H})/H = \partial \xh$ arises naturally.
\end{proof}

\begin{prop} \label{p_bordxh}
If $\partial X$ is connected then $\partial \xh$ is locally path connected. 
\end{prop}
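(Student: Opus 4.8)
The plan is to pull the statement back to the open subset $\partial X \setminus \Lambda H$ and then push it forward along the covering map supplied by Proposition \ref{pcovmap}. The essential topological input is a chain of classical facts about the boundary: since $X$ is a proper geodesic hyperbolic space carrying a geometric group action and $\partial X$ is connected, the space $\partial X$ is in fact \emph{locally connected}. (The only nontrivial case is when $\partial X$ is infinite, i.e. when the ambient group is one-ended; then connectedness forces the absence of global cut points by the work of Swarup and Bowditch, and a connected compact metric space with no global cut point is locally connected by Bowditch's criterion.) Being in addition compact, connected and metrizable, $\partial X$ is then a Peano continuum, hence locally \emph{path} connected. Since $\Lambda H = \overline{H\cdot x}\cap\partial X$ is closed in $\partial X$, the complement $\partial X\setminus\Lambda H$ is an open subspace, and an open subspace of a locally path connected space is again locally path connected; so $\partial X\setminus\Lambda H$ is locally path connected.

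Next I would invoke Proposition \ref{pcovmap}: as $H$ is torsion free and acts properly discontinuously on $X$, the quotient map $p\co \partial X\setminus\Lambda H \to \partial \xh$ is a covering map, in particular an open, surjective, continuous local homeomorphism. Such a map transports local path connectedness downward. Concretely, given $\xi \in \partial \xh$ and an open neighbourhood $V$ of $\xi$, one shrinks an evenly covered neighbourhood of $\xi$ so that it lies inside $V$, picks a sheet $\widetilde U$ over it containing a preimage $u$ of $\xi$, uses local path connectedness of $\partial X\setminus\Lambda H$ to find a path connected open neighbourhood $W\subseteq\widetilde U$ of $u$, and notes that $p(W)$ is an open (openness of $p$), path connected (continuity of $p$) neighbourhood of $\xi$ contained in $V$. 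Hence $\partial \xh$ is locally path connected.

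The main obstacle — really the only point that is not soft topology — is the step asserting that connectedness of $\partial X$ implies local connectedness; this is a deep theorem about hyperbolic groups, not something one establishes by hand, and I would need to make sure the standing hypotheses genuinely place us in a setting where $\partial X$ is the boundary of a (one-ended) hyperbolic group, so that the Bowditch--Swarup results apply. Everything downstream of that — "Peano continuum $\Rightarrow$ locally path connected", "open subspaces inherit local path connectedness", and "covering maps (indeed open continuous surjections) push local path connectedness forward" — is routine and can be dispatched with the short arguments sketched above.
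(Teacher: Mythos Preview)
Your proof is correct and follows essentially the same route as the paper: invoke Bowditch--Swarup to rule out global cut points, deduce local (path) connectedness of $\partial X$ (the paper attributes this step to \cite[\S 3]{BestvinaMess91} rather than to a general ``Bowditch criterion'' --- note the statement is false for arbitrary continua, e.g.\ the pseudo-arc), and then push this down via the covering map of Proposition \ref{pcovmap}. You are simply more explicit than the paper about the soft topological steps --- passing to the open subset $\partial X\setminus\Lambda H$ and transporting local path connectedness along a local homeomorphism --- which the paper compresses into ``completes the proof.''
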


The proof of this proposition relies on results discussed in detail in section \ref{s_application}.

\begin{proof}
A theorem proven independently by B.Bowditch \cite{Bowditch99} and G.A.Swarup \cite{Swarup96} asserts that if $\partial X$ is connected then it has no global cut point. Moreover, according to the \cite[\S 3]{BestvinaMess91} indicates that if $\partial X$ has no global cut point, it is locally path connected. The combination of these results with Proposition \ref{pcovmap} completes the proof.
\end{proof}

Although this result isn't sufficient to understand the set of ends of $\xh$, it brings into view the interesting work of M.Bestvina and G.Mess.

\subsection{Large Bestvina-Mess condition}
\label{ss_BM}

In the paper \cite{BestvinaMess91}, M.Bestvina and G.Mess give a condition $(\ddag_M)$ on $X$ that implies the local connectedness of boundary of $X$. For our purpose, we are going to consider a slight variation of their condition: for $M>K>0$, set

\begin{tabular}{l p{300pt}}
$(\ddag_{M,K})$ & There exists an integer $L > 0$ such that for every $R \geq K + 2\dx$ and for all points $x \in \sphx{R}$ and $y \in \sphx{R}^{+K}$ such that $d(x,y) \leq M$, there is a path of length less than or equal to $L$ joining $x$ to $y$ in the complement of  $\bar{B}(x_0,R - K - 2\dx)$. \tabularnewline
\end{tabular}

This condition also characterizes the local connectedness of one-ended hyperbolic spaces. More specifically, the proof of M.Bestvina and G.Mess in \cite[3.2]{BestvinaMess91} also gives literally the following result:
\begin{prop} \label{pddagboundarylocallyconencted}
If there exist constants $M,K$ with $M \geq 4K + 18\dx + 6$ such that $X$ satisfies  the condition $(\ddag_{M,K})$ then the boundary of $X$ is locally connected. \qed
\end{prop}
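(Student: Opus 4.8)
The plan is to reduce the statement to the original Bestvina--Mess criterion from \cite[\S 3]{BestvinaMess91}: if for every $R$ and every pair of points at distance at most $M$ on the sphere $\sphx{R}$ one can connect them by a path of uniformly bounded length staying outside a slightly smaller ball, then $\partial X$ is locally connected. Since the excerpt allows me to quote the work of Bestvina and Mess literally, the only thing to verify is that the condition $(\ddag_{M,K})$ stated here, with the numerical relation $M \geq 4K + 18\dx + 6$, is at least as strong as whatever hypothesis their proof actually uses. Concretely, I would first recall that their argument produces, for a point $\xi \in \partial X$ and a neighbourhood basis element, a connected ``annular'' region in $X$ between two spheres $\sphx{R-K-2\dx}$ and $\sphx{R}$ whose trace on the boundary is the desired small connected neighbourhood of $\xi$; the hyperbolicity constant $\dx$ and the constants $K$ and $M$ enter only through the thin-triangles estimates that control how geodesic rays to nearby boundary points fellow-travel.

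The key steps, in order, are as follows. First, fix $\xi \in \partial X$ and an index $R$; consider the set $U_R(\xi)$ of boundary points $\eta$ such that a geodesic ray from $x_0$ to $\eta$ comes within distance $K$ of a geodesic ray from $x_0$ to $\xi$ at radius $R$ — this is the standard shrinking neighbourhood basis of $\xi$ in $\partial X$. Second, given two points $\eta_1,\eta_2 \in U_R(\xi)$, the corresponding rays pass within distance $K$ of the $\xi$-ray at radius $R$, so by the triangle inequality their values at radius $R$ are within $M$ of each other once we arrange $M$ to dominate $2K$ plus the fellow-travelling error coming from $\dx$-thinness of the triangle with vertices $x_0$, $\eta_1(\infty)$, $\eta_2(\infty)$; this is exactly where the bound $M \geq 4K + 18\dx + 6$ is spent. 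Third, invoke $(\ddag_{M,K})$ to join $\eta_1(R)$ to $\eta_2(R)$ by a path of length $\leq L$ avoiding $\bar B(x_0, R-K-2\dx)$. Fourth, as $R \to \infty$ these bounded-length paths, concatenated with the terminal segments of the two rays, give proper rays to $\eta_1(\infty),\eta_2(\infty)$ that together bound a connected subset of $X$ staying outside $\bar B(x_0, R-K-2\dx)$; pushing to the boundary, $U_R(\xi)$ lies in a single connected component of the boundary piece it determines, and since $\{U_R(\xi)\}_R$ is a neighbourhood basis this proves local connectedness at $\xi$. Finally, observe $\xi$ was arbitrary.

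The main obstacle is bookkeeping rather than a genuinely new idea: one must make sure the numerical hypothesis $M \geq 4K + 18\dx + 6$ is precisely what is needed so that the fellow-travelling constants in step two, plus the ``$+2\dx$'' slack in the radius of the excluded ball, are all absorbed — i.e. that no step of the Bestvina--Mess argument secretly requires a stronger inequality when one replaces their condition $(\ddag_M)$ by $(\ddag_{M,K})$. Since the excerpt explicitly asserts that their proof ``gives literally'' this result, I would present the proof as: the hypotheses of $(\ddag_{M,K})$ imply, verbatim, the hypotheses used in the proof of \cite[3.2]{BestvinaMess91}, and then that proof applies without change; the role of the inequality $M \geq 4K + 18\dx + 6$ is exactly to guarantee the uniform-length annular connections survive the passage from one sphere to the next. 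So the proof is essentially a citation with a short verification of constants, and I would keep it to that.
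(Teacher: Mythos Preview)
Your proposal is correct and matches the paper's own treatment: the paper gives no proof at all beyond the sentence ``the proof of M.~Bestvina and G.~Mess in \cite[3.2]{BestvinaMess91} also gives literally the following result'' and a \qed, so your approach of citing Bestvina--Mess and briefly checking that the constants in $(\ddag_{M,K})$ with $M \geq 4K + 18\dx + 6$ are exactly what their argument consumes is precisely what the paper intends. If anything, your sketch of the four steps of the Bestvina--Mess argument is more detailed than what the paper provides.
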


Now we give a Bestvina-Mess condition for $\xh$: for $M>0$, set

\begin{tabular}{l p{300pt}}
$(\dag_{M})\,\,\,\,\,$ & There exists an integer $L > 0$ such that for every $R \geq \max\{ M + \dxh, 8 \dxh\}$ and for all points $\xbar,\ybar \in \sph{R}$ and $d(\xbar,\ybar) \leq M$, there is a path of length less than or equal to $L$ joining $\xbar$ to $\ybar$ in the complement of  $\cbxz{R - 8\dxh}$. \tabularnewline
\end{tabular}

The following result outlines a link between the condition $(\ddag_{M,K})$ for $X$ and the condition $(\dag_M)$ for $\xh$:

\begin{prop} \label{pXddagXHdag}
If there exists a constant $M>4\dxh$ such that $X$ satisfies the condition $(\ddag_{M,4\dxh})$ then the quotient space $\xh$ satisfies the condition $(\dag_M)$.
\end{prop}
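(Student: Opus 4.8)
The plan is to lift the problem to $X$ using the covering map of Proposition \ref{pcover}, apply the hypothesis $(\ddag_{M,4\dxh})$ there, and push the resulting path back down to $\xh$. First I would take $R \geq \max\{M+\dxh, 8\dxh\}$ and points $\xbar,\ybar\in\sph{R}$ with $d(\xbar,\ybar)\leq M$. Choose a lift $x\in X$ of $\xbar$ with $d(x_0,x)=d(\xzbar,\xbar)=R$, and then choose a lift $y$ of $\ybar$ realizing the distance, i.e.\ $d(x,y)=d(\xbar,\ybar)\leq M$. The point $x$ lies on $\sphx{R}$; the point $y$ satisfies $d(x_0,y)\leq d(x_0,x)+d(x,y)\leq R+M$ and $d(x_0,y)\geq d(\xzbar,\ybar)=R$, so $y$ lies in the annular region between radii $R$ and $R+M$ around $x_0$ — in particular $y\in \sphx{R'}^{+M}$ after adjusting $R'$, or more directly $y$ is within distance $M$ of the sphere through $x$. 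I would match up parameters so that the hypothesis $(\ddag_{M,4\dxh})$ applies with $K=4\dxh$: this requires $R-M \geq K+2\dx$ type bounds, which is why the threshold $R\geq \max\{M+\dxh,8\dxh\}$ appears, together with $\dxh\geq\dx$ (which we may assume, possibly after enlarging $\dxh$, as the excerpt already does several times).

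The core step is then: the condition $(\ddag_{M,4\dxh})$ for $X$ gives an integer $L>0$ and a path $\gamma$ of length $\leq L$ in $X$ from $x$ to $y$ avoiding $\bar B(x_0, R - 4\dxh - 2\dx)$. Since $\dxh\geq\dx$, the ball $\bar B(x_0,R-4\dxh-2\dx)$ contains $\bar B(x_0, R-8\dxh)$ once $4\dxh+2\dx \leq 8\dxh$, i.e.\ once $2\dx\leq 4\dxh$, which holds. Hence $\gamma$ avoids $\bar B(x_0,R-8\dxh)$, and in particular, since $R-8\dxh \geq 0$, the path $\gamma$ lies entirely in $X\setminus (H\cdot x_0)^{+\dxh}$ provided $R-8\dxh \geq \dxh$; this is exactly guaranteed by $R\geq 9\dxh$ — and here I would note that we actually want the slightly cleaner threshold, so the statement's $R\geq 8\dxh$ may need either $\dxh\geq\dx$ pushed a little or the path only needs to avoid the smaller ball; in any case the region where the covering map of Proposition \ref{pcover} is defined contains $\gamma$. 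Projecting $\gamma$ through the covering map $X\setminus(H\cdot x_0)^{+\dxh}\to \xh\setminus\cbxz{\dxh}$ yields a path $\overline{\gamma}$ in $\xh$ from $\xbar$ to $\ybar$. The covering map is a local isometry, so $\overline{\gamma}$ has the same length $\leq L$, and since $\gamma$ avoided $\bar B(x_0,R-8\dxh)$ in $X$, its image avoids $\cbxz{R-8\dxh}$ in $\xh$ (distances in $\xh$ only decrease under the quotient, but a point at distance $\leq R-8\dxh$ from $\xzbar$ in $\xh$ would lift to a point of $\gamma$ at distance $\leq R-8\dxh$ from some $H$-translate of $x_0$, and equivariance of everything plus the fact that $\gamma$ starts at a genuine sphere point handles this). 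That produces the path required by $(\dag_M)$ with the same constant $L$.

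The main obstacle I expect is bookkeeping with the various radii and the direction of the distance inequalities between $X$ and $\xh$: a path that avoids a ball around $x_0$ upstairs must be shown to avoid the corresponding ball around $\xzbar$ downstairs, and this uses that $d_{\xh}(\xbar',\xzbar)=d_X(x',H\cdot x_0)$ — so a downstairs point close to $\xzbar$ pulls back to an upstairs point close to \emph{some} translate $h\cdot x_0$, not necessarily to $x_0$. Controlling this requires that the lift $\gamma$ actually stays far from the whole orbit $H\cdot x_0$, which is where one genuinely needs $(\ddag_{M,4\dxh})$ to produce a path avoiding a ball of radius measured from $x_0$ \emph{and} the additional input that near $\sphx{R}$ with $R$ large, staying outside $\bar B(x_0,R-4\dxh-2\dx)$ forces staying outside $(H\cdot x_0)^{+\dxh}$ — this last implication may itself require a short argument comparing $d(x,x_0)$ with $d(x,H\cdot x_0)$ for $x$ near the orbit of a sphere point, which is essentially the content that makes $\coco$ bounded (its diameter enters $\rho$ and hence $\dxh$). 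I would isolate that comparison as the technical heart and keep the rest as routine radius arithmetic.
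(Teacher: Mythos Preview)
Your overall strategy---lift to $X$, apply $(\ddag_{M,4\dxh})$, push the path back down---is exactly the paper's. But there is a genuine gap in the first half that you have misjudged as bookkeeping.

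To apply $(\ddag_{M,4\dxh})$ you need $y\in\sphx{R}^{+4\dxh}$, i.e.\ $d(x_0,y)\leq R+4\dxh$. Your triangle inequality only gives $d(x_0,y)\leq R+M$, and since $M>4\dxh$ this is strictly weaker; there is no ``matching of parameters'' that closes the gap. In an arbitrary metric space the lift $y$ could genuinely sit at height $R+M$, and then $(\ddag_{M,4\dxh})$ does not apply. The paper handles this with the contraction property of projections onto quasi-convex sets (Proposition~\ref{pcontraction}): one projects $x$ and $y$ onto $(H\cdot x_0)^{+\dxh}$ and uses the two alternatives of that proposition to force $d(x_0,y)\leq R+4\dxh$. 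This is not routine radius arithmetic; it uses hyperbolicity of $X$ and quasi-convexity of the orbit in an essential way.

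For the second hurdle---showing the projected path avoids $\cbxz{R-8\dxh}$---you have correctly diagnosed that one must control $d(z,H\cdot x_0)$ and not merely $d(z,x_0)$ for each point $z$ on the upstairs path. But again the actual argument is the same contraction property applied to $x$ and $z$: it yields a uniform bound $d(x_0,z)-d(\xzbar,\zbar)\leq 3\dxh+37\dx$, from which $d(\xzbar,\zbar)\geq R-8\dxh$ follows. Your remark that this is ``essentially the content that makes $\coco$ bounded'' is in the right spirit, but the precise tool you need to name and invoke, in both places, is Proposition~\ref{pcontraction}.
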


\begin{proof}
Let us fix $M > 4\dxh$ such that $(\ddag_{M,4\dxh})$ is satisfied by $X$. Consider points $\xbar$, $\ybar$ in $\xh$ such that $d(\xzbar,\xbar) = d(\xzbar,\ybar) = R \geq \max\{ M + \dxh , 8 \dxh \}$ and $d(\xbar,\ybar) \leq M$. 
Denote by $x$ a lift of $\xbar$ in $X$ satisfying $d(x_0,x)=d(\xzbar,\xbar)$. As $R > M + \dxh$, lift $\bar{B}(\xbar,M)$ to $\bar{B}(x,M)$ using the covering map described in Proposition \ref{pcover}. By lifting a geodesic in this ball joining $\xbar$ to $\ybar$, we obtain a lift $y$ of $\ybar$ in the ball $\bar{B}(x,M)$ such that $d(x,y) = d(\xbar,\ybar)$. We are going to prove that these points $x$ and $y$ satisfy the conditions of $(\ddag_{M,4\dxh})$.

Let $\pi$ be a $\dx$-projection from $X$ to the $2\dx$-quasi-convex thickened orbit $(H\cdot x_0)^{+\dxh}$. In particular, $y$ satisfies
\begin{equation*}
d(\ybar,\xzbar) - \dxh \leq d(y,\pi(y)) \leq d(\ybar,\xzbar) - \dxh + \dx .
\end{equation*}
The contraction property of projections on quasi-convex sets (Proposition \ref{pcontraction}) asserts that
\begin{itemize}
\item either $d(\pi(x),\pi(y)) \leq 18 \dx$ and we have the following upper bound on $d(x_0,y)$:
\begin{align*}
d(x_0,y) & \leq d(x_0,\pi(x)) + d(\pi(x),\pi(y)) + d(\pi(y),y) \\
& \leq \dxh + 18\dx + d(\xzbar,\ybar) + \dx \\
& \leq 2\dxh + d(x_0,x) .
\end{align*}
\item or $d(\pi(x),\pi(y)) \leq 36 \dx + d(x,y) - d(x,\pi(x)) - d(y,\pi(y))$, which implies that
\begin{align*}
d(x,y) & \geq d(\pi(x),\pi(y)) + d(x,\pi(x)) + d(y,\pi(y)) - 36 \dx \\
& \geq d(x_0,\pi(y)) + d(x,x_0) + d(y,\pi(y)) - 2d(x_0,\pi(x)) - 36 \dx.
\end{align*}
Then our choice of $x$ and $y$ yields that 
\begin{equation*}
d(\xbar,\ybar) \geq d(x_0,\pi(y)) + d(\xzbar,\xbar) + d(\xzbar,\ybar) -3\dxh - 36 \dx .
\end{equation*}
The triangle inequality for $d(\xbar,\ybar)$ gives $d(x_0,\pi(y)) \leq 3\dxh + 36\dx$.
Therefore, we obtain
\begin{align*}
d(x_0,y) & \leq d(x_0, \pi(y)) + d(\pi(y),y) \\
& \leq 4\dxh + d(x_0,x) .
\end{align*}
\end{itemize}

In either case, $x$ and $y$ satisfy $d(x_0,x) = R$, $| d(x_0,x) - d(x_0,y) | \leq 4\dxh$ and $d(x,y) \leq M$. As $X$ satisfies $(\ddag_{M,4\dxh})$, there exists an integer $L > 0$ and a path of length less than or equal to $L$ joining $x$ to $y$ in the complement of $\bar{B}(x_0,R-4\dxh-2\dx)$.

Now, we show that the image of this path in $\xh$ stays in the complement of $\cbxz{R-8\dxh}$. To do so, consider a point $z\in X$ on this path. 
The above arguments also shows that the image $\zbar$ of $z$ in $\xh$ satisfies $d(x_0,z) \leq 3\dxh + 37 \dx + d(\xzbar,\zbar)$ and this implies that $d(\xzbar,\zbar) \geq R -8\dxh$.
Then the image of the path in $\xh$ forms a path of length less than or equal to $L$ joining $\xbar$ to $\ybar$ in the complement of $\cbxz{R-8\dxh}$.
\end{proof}

\subsection{From sphere to shadows}
From now on, fix an integer $M \geq 43 \dxh + 4$ and set $R_0 = M + \dxh$. In the text below, paths are supposed to be of finite length and $\sph{R_0}$ will denote the sphere at $\xzbar$ of radius $R_0$.

In light of the Bestvina-Mess conditions, the following equivalence relation arises naturally.

\begin{defn} \label{d_eq_sphere}
Two points on $\sph{R_0}$ are \emph{equivalent} if there exists a path joining them in the complement of the open ball $\obxz{R_0 - 3 \dxh}$. 
\end{defn}

Denote by $\sim$ this equivalence relation on the sphere $\sph{R_0}$ and $[ \xbar ]$ the class of a point $\xbar$ under this relation. 
In order to link these equivalence classes with the whole quotient space $\xh$, define a projection on $\cbxz{R_0}$. 
\medskip

First, note that the thickened orbit $(H \cdot x_0)^{+R_0}$ is $2\dx$-quasi-convex since $R_0$ is greater than the quasi-convexity constant of the orbit which is $\dx$. So consider a $H$-equivariant $\dx$-projection on this thickened orbit, namely $\proj{0} \co X \to (H \cdot x_0)^{+ R_0}$. Given $\proj{0}$, define a projection in $\xh$, that is a map $\proj{0}' \co \xh \to \cbxz{R_0}$ such that:
\begin{itemize}
\item For every point $\xbar$ in the complement of $\cbxz{R_0}$, denote by $x$ a lift of $\xbar$ in $X$. The map $\proj{0}'$ sends $\xbar$ to $\overline{\proj{0}(x)}$, the image of $\proj{0}(x)$ in $\xh$.
\item The map $\proj{0}'$ is the identity on the closed ball $\cbxz{R_0}$. 
\end{itemize}

\begin{remark} \label{rprojprim}
The definition of $\proj{0}'$ does not depend on the choice of the lift of $\xzbar$. Indeed, if $x'$ is an other lift of $\xbar$ in $X$, there exists an element $h$ in $H$ such that $x'=hx$. Then, by $H$-equivariance of $\proj{0}$, we have that
\begin{equation*}
d(\overline{\proj{0}(x)} , \overline{\proj{0}(x')} )  = \min \limits_{h \in H} d(\proj{0}(x) , h\proj{0}(h'x))  = \min \limits_{h \in H}  d(\proj{0}(x) , hh'\proj{0}(x) )  = 0.
\end{equation*}
The projection $\proj{0}'(\xbar)$ is also equal to $\overline{\proj{0}(x')}$.
\end{remark}

\begin{prop} \label{prayongeodmemeclasse}
For every geodesic ray  $c \co \mathbb{R}_{\scriptscriptstyle \geq 0} \to \xh$ arising from $\xzbar$ and for every integer $R \geq R_0$, there exists a path joining $c(R_0)$ to $\proj{0}'(c(R))$ in the complement of $\obxz{R_0 -10\dx}$. 
\end{prop}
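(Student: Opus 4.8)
The plan is to work in the cover $X$ and transfer the conclusion down to $\xh$ via the covering map of Proposition~\ref{pcover}. Fix a geodesic ray $c\co\mathbb{R}_{\geq 0}\to\xh$ from $\xzbar$ and an integer $R\geq R_0$. Let $x$ be a lift of $c(R)$ to $X$ with $d(x_0,x)=d(\xzbar,c(R))=R$, and lift the segment $c([R_0,R])$ to a geodesic segment in $X$ ending at $x$; this produces a point $x_1$ lifting $c(R_0)$ with $d(x_0,x_1)=R_0$ and $d(x_1,x)=R-R_0$ (using $R_0>\dxh$, so the segment lies outside $\cbxz{\dxh}$ and lifts by Proposition~\ref{pcover}). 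Then $x_0$, $x_1$, $x$ lie on a common geodesic in $X$. First I would estimate $d(x_0,\proj{0}(x))$: since $\proj{0}$ is a $\dx$-projection onto $(H\cdot x_0)^{+R_0}$ and $x_1\in(H\cdot x_0)^{+R_0}$ lies on a geodesic $[x_0,x]$ at distance exactly $R_0$ from $x_0$, an approximating-tree argument (Lemma~\ref{lapproxtree}) applied to the triangle with vertices $x$, $x_1$, $\proj{0}(x)$ — exactly as in the proof of Theorem~\ref{txhhyp} — gives $d(x_0,\proj{0}(x))\leq R_0 + c\dx$ for a small explicit constant $c$. In particular $\proj{0}(x)$ lies close to the sphere $\sphx{R_0}$ in $X$.

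Next I would produce the path in $X$. The natural candidate joining $x_1$ to $\proj{0}(x)$ is the concatenation of a geodesic $[x_1,\proj{0}(x)]$, but a priori this could dip inside $\obxz{R_0-10\dx}$; instead I would use that both $x_1$ and $\proj{0}(x)$ lie near $\sphx{R_0}$ and within bounded distance of each other (both within $\approx c\dx$ of the sphere and, since the triangle $x,x_1,\proj{0}(x)$ is thin, $d(x_1,\proj{0}(x))$ is bounded by a constant depending only on $\dx$), and invoke the Bestvina–Mess–type machinery for $X$ — or, more directly, just take a geodesic and bound how far inward it can go via thinness of the triangle $x_0,x_1,\proj{0}(x)$. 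The key elementary estimate: for any point $w$ on $[x_1,\proj{0}(x)]$, $d(x_0,w)\geq R_0 - (\text{small multiple of }\dx)$, because $w$ is within $\dx$ of $[x_0,x_1]\cup[x_0,\proj{0}(x)]$ and both of those segments meet $\obxz{R_0-\text{const}}$ only near their common initial portion. Choosing the constants so that ``small multiple'' is at most $10\dx$ (which is why the statement allows the generous margin $10\dx$) yields a path in $X$ from $x_1$ to $\proj{0}(x)$ avoiding $\obxz{R_0-10\dx}$.

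Finally I would push this path down by the covering map of Proposition~\ref{pcover}: its image in $\xh$ is a path from $c(R_0)$ to $\overline{\proj{0}(x)}=\proj{0}'(c(R))$, and since the covering map is distance-nonincreasing on the relevant region and the radii in question ($R_0-10\dx$ versus $R_0-8\dxh\geq\dxh$) keep everything outside $\cbxz{\dxh}$, the image avoids $\obxz{R_0-10\dx}$ in $\xh$. (One must check that the path in $X$ really does stay in the region $X\setminus(H\cdot x_0)^{+\dxh}$ where Proposition~\ref{pcover} applies; this follows from $R_0-10\dx>\dxh$, which holds since $R_0=M+\dxh$ with $M\geq 43\dxh+4$.)

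\textbf{Main obstacle.} The delicate point is controlling how far the path in $X$ from $x_1$ to $\proj{0}(x)$ can penetrate toward $x_0$; this is where the precise constant $10\dx$ must be matched against the approximating-tree error terms and the thinness constants, and where one needs to be careful that $x_1$ and $\proj{0}(x)$ are genuinely close (bounded distance depending only on $\dx$) rather than merely both near the sphere. Everything else is a routine transfer through the covering map together with estimates already carried out in the proof of Theorem~\ref{txhhyp}.
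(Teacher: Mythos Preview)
Your approach is sound but takes a genuinely different route from the paper. The paper does \emph{not} lift the whole segment and appeal to the approximating-tree estimate from Theorem~\ref{txhhyp}. Instead it subdivides $[R_0,R]$ into steps $R_0=r_1\leq\cdots\leq r_n=R$ with $r_{i+1}-r_i\leq\dx$, lifts the $c(r_i)$ successively to points $x_i\in X$ with $d(x_{i-1},x_i)\leq\dx$, and then applies only the contraction inequality (Proposition~\ref{pcontraction}) to get $d(\proj{0}(x_i),\proj{0}(x_{i+1}))<20\dx$. Concatenating the short segments $[\overline{\proj{0}(x_i)},\overline{\proj{0}(x_{i+1})}]$ in $\xh$ gives the desired path; each $\overline{\proj{0}(x_i)}$ sits in $[R_0-\dx,R_0]$ from $\xzbar$, so the whole chain stays outside $\obxz{R_0-10\dx}$. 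Your single-step argument via $d(x_1,\proj{0}(x))\leq 14\dx$ is more direct and even yields a much shorter path, but relies on the heavier tool (Lemma~\ref{lapproxtree}); the paper's chaining is more elementary and the same technique recurs in Lemma~\ref{lanneau}.

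Two small points about your write-up. First, the assertion that $d(x_0,x_1)=R_0$ (equivalently that $x_0,x_1,x$ are collinear) does not follow from the lifting you describe: you only get $d(H\cdot x_0,x_1)=R_0$. Fortunately you do not need collinearity --- the approximating-tree computation in Theorem~\ref{txhhyp} uses only that $x_1\in (H\cdot x_0)^{+R_0}$ realises $d(x,(H\cdot x_0)^{+R_0})$, and here $d(x,x_1)=R-R_0=d(x,(H\cdot x_0)^{+R_0})$ already holds. Second, feeding the paper's own constants through gives $d(x_1,\proj{0}(x))\leq 14\dx$, so your path avoids $\obxz{R_0-14\dx}$ rather than $\obxz{R_0-10\dx}$; this is harmless downstream since the proposition is only invoked against the much larger margin $3\dxh$ (and $\dxh>65\dx$), but you should not claim that the $10\dx$ in the statement is ``generous'' for your argument --- it is in fact a bit tight.
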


\begin{proof}
Let $(r_i)_{1\leq i \leq n}$ be a finite sequence of integers such that $R_0 = r_1 \leq r_2 \leq \ldots \leq r_n = R$ and $r_{i+1}-r_i \leq \dx$ for all $i \in \{1,\ldots ,n-1\}$. Denote by $x_1$ a lift of $c(r_1)$ in $X$. Then, for each $i>1$, denote by $x_i$ a lift of $c(r_i)$ in $X$ satisfying $d(x_{i-1},x_{i}) \leq \dx$. 
By Remark \ref{rprojprim}, $d(\proj{0}'(c(r_i)),\proj{0}'(c(r_{i+1}))) = d( \overline{\proj{0}(x_i)} , \overline{\proj{0} (x_{i+1})} )$ for all $i \in \{1,\ldots ,n-1\}$.
Now apply the contraction property of projections on quasi-convex sets (Proposition \ref{pcontraction}) to each pair $(x_i,x_{i+1})$: for all $i \in \{1,\ldots ,n-1\}$, 
\begin{align*}
 d( \overline{\proj{0}(x_i)} , \overline{\proj{0} (x_{i+1})} ) & \leq d(\proj{0}(x_i) , \proj{0}(x_{i+1})) \\
& \leq 18 \dx + d(x_i , x_{i+1}) \\
& < 20 \dx .
\end{align*}
As $\proj{0}'$ is the identity on $\sph{R_0}$, we also have that $\proj{0}'(c(R_0)) = \overline{x_1} = c(R_0)$. 
Now consider the path obtained by concatenating a geodesic segment $[c(R_0) , \overline{\proj{0}(x_1)} ]$ with geodesic segments $[ \overline{\proj{0}(x_i)} , \overline{\proj{0}(x_{i+1})} ]$ for $i \in \{1,\ldots,n-1\}$ and $[ \overline{\proj{0}(x_n)}, \proj{0}'(c(R)) ]$. This path joins $c(R_0)$ to $\proj{0}'(c(R))$ and stays in the complement of the open ball $\obxz{R_0 -10\dx}$.
\end{proof}

As Theorem \ref{txhhyp} gives $\dxh$ much greater than $65\dx$, Proposition \ref{prayongeodmemeclasse} indicates that $c(R_0) \sim \proj{0}'(c(R))$ and it induces the following definition.

\begin{defn}\label{d_sdw}
A \emph{shadow} of an equivalence class for $\sim$ is the inverse image of this class under $\proj{0}'$.
\end{defn}

``Being in the same shadow" is clearly an equivalence relation on points in the complement of $\obxz{R_0}$ for which shadows are equivalence classes. Denote by $\sdw{\xbar}$ the shadow of the class $\left[\xbar\right]$ and $\sdwxh$ the set of shadows in $\xh$.

\begin{prop} \label{pnbrombres=nbrclasses}
The set $\sdwxh$ of shadows in $\xh$ is in bijection with the set $\sph{R_0}/\sim$ of equivalence classes for the relation $\sim$ on $\sph{R_0}$.
\end{prop}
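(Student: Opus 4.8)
The plan is to write down the obvious candidate for the bijection and verify it directly; the whole point is that $\proj{0}'$ restricts to the identity on $\sph{R_0}$, so a $\sim$-class can be recovered from its shadow by intersecting with $\sph{R_0}$. Define
\[
\Phi \co \sph{R_0}/{\sim} \longrightarrow \sdwxh, \qquad \Phi\bigl([\xbar]\bigr) = \sdw{\xbar}.
\]
By Definition \ref{d_sdw}, $\sdw{\xbar} = (\proj{0}')^{-1}\bigl([\xbar]\bigr)$ depends only on the class $[\xbar]$ and not on the chosen representative, and $\proj{0}'$ is itself well defined by Remark \ref{rprojprim}; hence $\Phi$ is a well-defined map into the set of shadows.

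The key observation I would record first is that, since $\sph{R_0} \subseteq \cbxz{R_0}$ and $\proj{0}'$ is the identity on $\cbxz{R_0}$, every $\ybar \in \sph{R_0}$ satisfies $\proj{0}'(\ybar) = \ybar$. Therefore, for any $\xbar \in \sph{R_0}$,
\[
\sdw{\xbar} \cap \sph{R_0} = \{\, \ybar \in \sph{R_0} \mid \proj{0}'(\ybar) \in [\xbar] \,\} = \{\, \ybar \in \sph{R_0} \mid \ybar \in [\xbar] \,\} = [\xbar].
\]
In particular $\xbar \in \sdw{\xbar}$, so every shadow is nonempty and meets $\sph{R_0}$ in exactly one $\sim$-class.

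Surjectivity of $\Phi$ is then immediate: by Definition \ref{d_sdw} a shadow is the $\proj{0}'$-preimage of some $\sim$-class $[\xbar]$ of $\sph{R_0}$, hence equals $\Phi([\xbar])$. For injectivity, if $\Phi([\xbar]) = \Phi([\ybar])$, that is $\sdw{\xbar} = \sdw{\ybar}$, then intersecting with $\sph{R_0}$ and using the displayed equality gives $[\xbar] = [\ybar]$. Thus $\Phi$ is a bijection, whose inverse sends a shadow $S$ to the $\sim$-class $S \cap \sph{R_0}$. No step here is really an obstacle: the argument is just an unwinding of Definitions \ref{d_eq_sphere} and \ref{d_sdw}, the only genuine inputs being that $\proj{0}'$ is well defined (Remark \ref{rprojprim}) and is the identity on the closed ball $\cbxz{R_0}$, so that it cannot collapse distinct equivalence classes of the sphere.
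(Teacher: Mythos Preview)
Your proof is correct and follows essentially the same route as the paper: both define the map sending a $\sim$-class to its shadow, check it is well defined, obtain surjectivity directly from Definition \ref{d_sdw}, and prove injectivity by recovering the class from the shadow. The only cosmetic difference is that the paper recovers $[\xbar]$ as $\proj{0}'(\sdw{\xbar})$ while you recover it as $\sdw{\xbar}\cap\sph{R_0}$; since $\proj{0}'$ is the identity on $\sph{R_0}$, these are the same computation.
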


\begin{proof}
Consider $f \co \sph{R_0} /\sim \, \to \sdwxh $ the map which sends each equivalence class to its shadow in $\xh$.

Let $\xbar$ and $\ybar$ be two points in $\sph{R_0}$ such that $\sdw{\xbar} = \sdw{\ybar}$. In particular, $\proj{0}'(\sdw{\xbar} ) = \proj{0}'(\sdw{\ybar})$. Moreover, by definition of $\proj{0}'$, the class of $\xbar$ for the relation $\sim$ is $[\xbar] = \proj{0}'(\sdw{\xbar} )$ and the class of $\ybar$ for this relation is $[\ybar] = \proj{0}'(\sdw{\ybar})$. Thus the map $f$ is one-to-one. 

Let $\mathscr{S} \in \sdwxh$. By definition of the shadow, there is a point $\xbar \in \sph{R_0}$ such that $\mathscr{S}$ is the inverse image of $[\xbar]$ under $\proj{0}'$. Then the image of $[\xbar]$ in $\sdwxh$ is $\mathscr{S}$ and the map $f$ is surjective.

To conclude, $f$ is a bijection between the set of equivalence classes on $\sph{R_0}$ and the set of shadows in $\xh$. 
\end{proof}

\subsection{From shadows to ends}

In this paragraph, we establish a link between shadows and ends of the quotient space $\xh$. 
Recall that $M \geq 43 \dxh + 4$ and $R_0 = M + \dxh$ are fixed.

\begin{lem} \label{lptsdansmemesphereetombrejointsparchemin}
If $X$ satisfies $(\ddag_{M,4\dxh})$, then for all integers $R \geq R_0$, any two points on $\sph{R}$ in the same shadow are joined by a path in the complement of the open ball $\obxz{R - 8\dxh}$.
\end{lem}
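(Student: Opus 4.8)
The plan is to prove the statement by induction on $R$, pushing a connecting path one shell further out at each step. The engine is the condition $(\dag_M)$, which holds for $\xh$ by Proposition~\ref{pXddagXHdag} since $X$ satisfies $(\ddag_{M,4\dxh})$ and $M\geq 43\dxh+4>4\dxh$; moreover $R_0=M+\dxh\geq 8\dxh$, so the constraint $R\geq\max\{M+\dxh,8\dxh\}$ needed to invoke $(\dag_M)$ is automatic for $R\geq R_0$. I will also use that $\xh$ enjoys the geodesic extension property with constant $\dxh$ (Corollary~\ref{cgeodext} and the remark after it), so that through every point passes a ray from $\xzbar$ within $\dxh$ of it, hence within $4\dxh$ of it at the matching radius by Remark~\ref{remarkgeodextprop}; the contraction property, Proposition~\ref{pcontraction}; and that $\dxh$ is much larger than $\dx$, as recorded in Theorem~\ref{txhhyp}. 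The base case $R=R_0$ is immediate: two points of $\sph{R_0}$ in the same shadow lie in the same class for $\sim$ (because $\proj{0}'$ is the identity on $\sph{R_0}$), hence are joined by a path avoiding $\obxz{R_0-3\dxh}$ by Definition~\ref{d_eq_sphere}, and this path already avoids the smaller ball $\obxz{R_0-8\dxh}$. Iterating the step below in increments of size at most one then reaches every integer $R\geq R_0$.

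For the inductive step I assume the statement at radius $R$ and take $\xbar,\ybar\in\sph{R+1}$ in the same shadow. I let $\xbar',\ybar'$ be the points at distance $R$ from $\xzbar$ on geodesics $[\xzbar,\xbar]$ and $[\xzbar,\ybar]$. Since $d(\xbar,\xbar')=1$, the contraction property bounds $d(\proj{0}'\xbar,\proj{0}'\xbar')$ by a constant comparable to $\dx$; because $\proj{0}'$ maps the exterior of $\obxz{R_0}$ to within $\dx$ of $\sph{R_0}$, a geodesic between $\proj{0}'\xbar$ and $\proj{0}'\xbar'$ stays outside $\obxz{R_0-3\dxh}$ once $\dxh\gg\dx$, so $\proj{0}'\xbar\sim\proj{0}'\xbar'$; likewise $\proj{0}'\ybar\sim\proj{0}'\ybar'$, and since $\proj{0}'\xbar\sim\proj{0}'\ybar$ by hypothesis, the points $\xbar'$ and $\ybar'$ are again in the same shadow. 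The inductive hypothesis then furnishes a path $\sigma$ from $\xbar'$ to $\ybar'$ in the complement of $\obxz{R-8\dxh}$.

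It remains to push $\sigma$ outward by one shell. I would decompose $\sigma$ into maximal sub-arcs according to whether the distance to $\xzbar$ is $\geq R+1-6\dxh$ or $<R+1-6\dxh$: the former already avoid $\obxz{R+1-8\dxh}$ and are kept unchanged, while each latter (\emph{problematic}) sub-arc $\tau$ has both its endpoints exactly on $\sph{R+1-6\dxh}$ and has distance to $\xzbar$ confined to the window $[R-8\dxh,\,R+1-6\dxh]$. I subdivide such a $\tau$ into points $w_0,\dots,w_m$ with $d(w_i,w_{i+1})\leq\dxh$ and replace each $w_i$ by a point $w_i'$ on $\sph{R+1}$ obtained by running out to radius $R+1$ along a ray from $\xzbar$ passing near $w_i$; the window bound gives $d(w_i,w_i')\leq 12\dxh+1$, hence $d(w_i',w_{i+1}')\leq 25\dxh+2\leq M$. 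Then I join consecutive $w_i',w_{i+1}'\in\sph{R+1}$ by a path of bounded length in the complement of $\cbxz{R+1-8\dxh}$ using $(\dag_M)$, and join $w_0$, $w_m$ (the endpoints of $\tau$, only $6\dxh$ inside $\sph{R+1}$) to $w_0'$, $w_m'$ along the corresponding rays, which stay at distance $\geq R+1-8\dxh$ from $\xzbar$ precisely because the ray slack is $4\dxh$ and $6\dxh+2\dxh=8\dxh$. Concatenating the kept sub-arcs with these replacements gives a path from $\xbar'$ to $\ybar'$ avoiding $\obxz{R+1-8\dxh}$; prepending $[\xbar,\xbar']$ and appending $[\ybar',\ybar]$, which lie between radii $R$ and $R+1$ and so outside $\obxz{R+1-8\dxh}$, completes the step.

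The hard part will be this last paragraph: one must calibrate the cutoff radius $R+1-6\dxh$, the target radius $R+1$, and the mesh of the subdivision so that every piece of the reconstructed path — the radial arcs, the $(\dag_M)$-paths, and the junctions between problematic and non-problematic sub-arcs — stays outside $\obxz{R+1-8\dxh}$, while no distance handed to $(\dag_M)$ ever exceeds $M$; tracking these inequalities is exactly what the hypothesis $M\geq 43\dxh+4$ pays for, and it is essential here that the lower bound ``distance $\geq R-8\dxh$'' on $\sigma$ comes from the inductive hypothesis rather than from the level-$R_0$ relation $\sim$, since otherwise the window on which $\tau$ sits would not be of bounded width. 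A secondary nuisance, already visible in the inductive step, is that $\proj{0}'$ lands only near $\sph{R_0}$ rather than on it, which has to be absorbed into the same $\dxh\gg\dx$ estimates.
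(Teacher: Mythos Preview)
Your proof is correct and follows essentially the same inductive scheme as the paper: reduce from radius $R+1$ to radius $R$ via nearby points, invoke the induction hypothesis to get a connecting path one shell in, and then push that path back out using geodesic rays together with the Bestvina--Mess condition $(\dag_M)$ on $\xh$.

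The only genuine organizational difference is in the push-out step. You split the level-$R$ path $\sigma$ into ``kept'' arcs (already outside $\obxz{R+1-6\dxh}$) and ``problematic'' arcs (inside that ball but outside $\obxz{R-8\dxh}$), and only replace the latter; you also subdivide with mesh $\dxh$ and obtain the bound $d(w_i',w_{i+1}')\leq 25\dxh+2$. The paper instead pushes the \emph{entire} path out uniformly: it subdivides $\sigma$ with mesh $\dx$, sends every subdivision point out along a ray from $\xzbar$ to radius $R+1$, and computes $d(c_i(R+1),c_{i+1}(R+1))\leq 4+43\dxh$; this is exactly where the hypothesis $M\geq 43\dxh+4$ is calibrated. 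Your variant is slightly more economical (you move fewer points) but buys this at the cost of the extra case analysis and the junction estimates at the boundary of each problematic arc; the paper's version avoids all case splits at the price of a coarser constant. Both are valid, and your remark that the nuisance ``$\proj{0}'$ lands only near $\sph{R_0}$'' must be absorbed into $\dxh\gg\dx$ is exactly the same slack the paper uses implicitly.
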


We follow closely the arguments given by M.Bestvina and G.Mess in \cite[3.2]{BestvinaMess91}.

\begin{proof}
We proceed by induction on $R$. Consider two points $\xbar$, $\ybar$ on $\sph{R_0}$ in the same shadow. This means that $\proj{0}'(\xbar)$ and $\proj{0}'(\ybar)$ are joined by a path in the complement of $\obxz{R_0 -3\dxh}$. But $\proj{0}'(\xbar) = \xbar$ and $\proj{0}'(\ybar)=\ybar$. So there exists a path joining $\xbar$ to $\ybar$ in the complement of  $\obxz{R_0 -3\dxh} \supset \obxz{R_0 - 8\dxh}$.

Let $R \geq R_0$. Assume now that all pair of points on $\sph{R}$ in the same shadow are joined by a path in the complement of $\obxz{R-8\dxh}$. Consider two points $\xbar$ and $\ybar$ on $\sph{R+1}$ in the same shadow. Denote by $c_0$ and $c_1$ two geodesic rays arising from $\xzbar$ passing within distance $\dxh$ from $\xbar$ and $\ybar$ respectively. 

To prove the inductive step, construct a sequence of points on $\sph{R+1}$ that are sufficiently close to use that $\xh$ satisfies $(\dag_M)$ and to join $\xbar$ and $\ybar$ by a path in the complement of $\obxz{R+1 - 8\dxh}$. 
To do so, we prove that $c_0(R)$ and $c_1(R)$ are in the same shadow using
\begin{equation*}
\proj{0}'(c_0(R)) \sim \proj{0}'(c_0(R+1)) \sim \proj{0}'(\xbar) \sim \proj{0}'(\ybar) \sim \proj{0}'(c_1(R+1)) \sim \proj{0}'(c_1(R)) .
\end{equation*}
and then construct the sequence of points using the induction hypothesis for $c_0(R)$ and $c_1(R)$.
\medskip

First of all, prove $\proj{0}'(c_0(R+1)) \sim \proj{0}'(\xbar)$. Denote by $z$ a lift of $c_0(R+1)$ and $x'$ a lift of $\xbar$ such that $d(x',z) = d(c_0(R+1),\xbar) \leq 4 \dxh$ (according to Remark \ref{remarkgeodextprop}). By contraction property (Proposition \ref{pcontraction}), we obtain
\begin{align*}
d(\proj{0}'(c_0(R+1)),\proj{0}'(\xbar)) & = d(\overline{\proj{0}(z)}, \overline{\proj{0}(x')}) \\
& \leq  d(\proj{0}(z), \proj{0}(x'))  \\
& \leq 18\dx + d(z,x') \\
& < 5\dxh .
\end{align*}

Therefore, there exists a path joining points $\proj{0}'(c_0(R+1))$ and $\proj{0}'(\xbar)$ in the complement of $\obxz{R_0 - 3\dxh}$. 
An analogous computation indicates that there exists also a path joining $\proj{0}'(c_1(R+1))$ and $\proj{0}'(\ybar)$ in the complement of  $\obxz{R_0 - 3\dxh}$. 
By Proposition \ref{prayongeodmemeclasse}, there is a path joining $\proj{0}'(c_i(R))$ to $\proj{0}'(c_i(R+1))$ in the complement of $\obxz{R_0 -10\dx}$ for $i = 0 , 1$. 
Moreover, as $\xbar$ and $\ybar$ are in the same shadow, there exists also a path joining $\proj{0}'(\xbar)$ and $\proj{0}'(\ybar)$ in the complement of $\obxz{R_0 - 3\dxh}$. 
By concatenating these paths, we obtain a path from $\proj{0}'(c_0(R))$ to $\proj{0}'(c_1(R))$ in the complement of  $\obxz{R_0 - 3\dxh}$. 
And so the points $c_0(R)$ and $c_1(R)$ on the sphere $\sph{R}$ are in the same shadow. 

Figure \ref{figsdwpath} depicts the inductive step.

\begin{figure}[ht!]
\labellist
\small \hair 2pt
\pinlabel $c_0$ at 55 300
\pinlabel $c_i$ at 210 325
\pinlabel $c_{i+1}$ at 300 325
\pinlabel $c_1$ at 490 300
\pinlabel $\xbar$ at 20 198
\pinlabel $\ybar$ at 460 175
\pinlabel $\overline{s_i}$ at 208 268
\pinlabel $\overline{s_{i+1}}$ at 293 255
\pinlabel $\overline{p_{i}}$ at 227 180
\pinlabel $\overline{p_{i+1}}$ at 280 92
\pinlabel $\overline{q_{i}}$ at 185 180
\pinlabel $\overline{q_{i+1}}$ at 242 95
\pinlabel $c_0(R)$ at 85 135
\pinlabel $c_0(R+1)$ at 85 225
\pinlabel $c_1(R)$ at 444 112
\pinlabel $c_1(R+1)$ at 479 200
\pinlabel $c_i(R+1)$ at 233 224
\pinlabel $c_{i+1}(R+1)$ at 310 220
\pinlabel $\sph{R+1}$ at 530 180
\pinlabel $\sph{R}$ at 500 117
\pinlabel $\sph{R-8\dxh}$ at 505 32
\endlabellist
\centering
\includegraphics[scale=0.62]{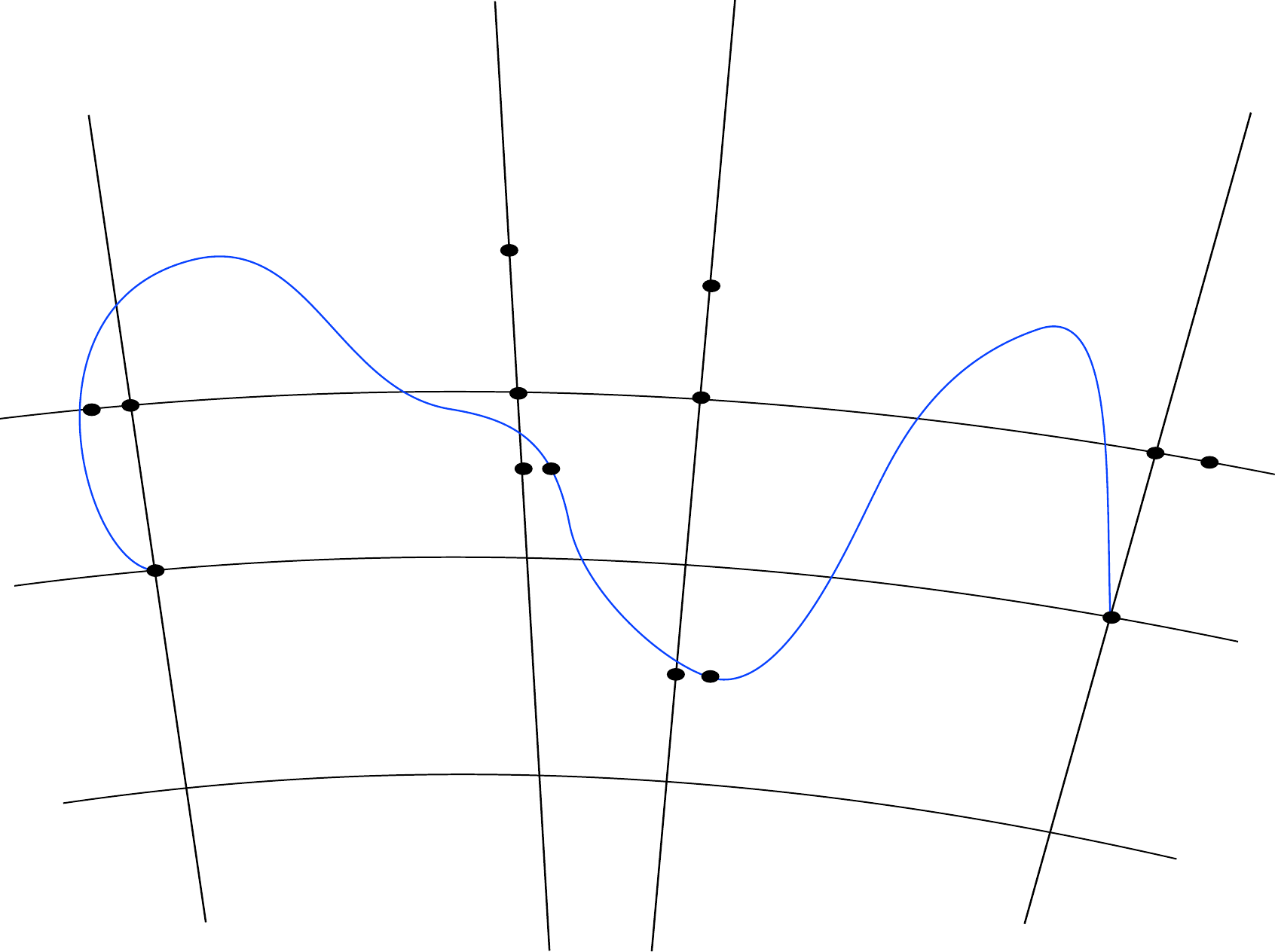}
\caption{Joining $\xbar$ and $\ybar$ in the complement of  $\obxz{R+1-8\dxh}$}
\label{figsdwpath}
\end{figure}

Now apply the induction hypothesis to $c_0(R)$ and $c_1(R)$: there exists a path joining $c_0(R)$ and $c_1(R)$ in the complement of $\obxz{R - 8\dxh}$. 
Consider $\overline{p_0}, \ldots ,\overline{p_n}$ points on the path satisfying $\overline{p_0} = c_0(R)$, $\overline{p_n} = c_1(R)$ and $d(\overline{p_i},\overline{p_{i+1}}) \leq \dx$ for all $i \in \{ 0, \ldots, n-1 \}$. Then for all $i$, denote by $c_{i}$ a geodesic ray arising from $\xzbar$ passing within distance $\dxh$ from $\overline{p_i}$. 

Denote by $\overline{q_i}$ a point of $c_{i}$ at distance less than or equal to $\dxh$ of $\overline{p_i}$. Then the distance between $\overline{q_i}$ and $\xzbar$ is at least $R- 9\dxh$. Denote by $\overline{s_i}$ a point of $c_i$ satisfying 
\begin{align*}
\begin{cases}
d(\overline{s_i},\xzbar) \geq R+1 ,\\
d(\overline{s_i},\overline{q_i}) \leq 1 + 9\dxh .
\end{cases}
\end{align*}
For all $i \in \{0, \ldots , n-1 \}$, these points satisfy $d(\overline{s_i},\overline{s_{i+1}}) \leq 2 + 20\dxh + \dx $. 
Consider a geodesic triangle formed by the sub-segments $[\xzbar , \overline{s_{i}}] \subset c_i$, $[\xzbar , \overline{s_{i+1}}] \subset c_{i+1}$ and a geodesic segment joining $\overline{s_{i}}$ and $\overline{s_{i+1}}$. According to \cite[III.H.1.15]{BridsonHaefliger99}, we obtain the following:
\begin{align*}
d(c_i(R+1),c_{i+1}(R+1)) & \leq 2 (2 + 20\dxh + \dx + \dxh) \\
& \leq 4 + 43 \dxh .
\end{align*}

As $X$ satisfies the property $(\ddag_{4+43\dxh ,4\dxh})$, there is a path joining $c_{i}(R+1)$ and $c_{i+1}(R+1)$ in the complement of $\obxz{R+1- 8\dxh}$ for all $i \in \{ 0, \ldots ,n-1 \}$. Moreover, given Remark \ref{remarkgeodextprop}, $d(c_0(R+1), \xbar) \leq 4\dxh$ and $d(c_1(R+1), \ybar) \leq 4\dxh$. Thus, there exists a path joining $\xbar$ to $\ybar$ in the complement of  $\obxz{R+1 - 8\dxh}$.
\medskip

To conclude, for all $R \geq R_0$, all two points on $\sph{R}$ in the same shadow are joined by a path in the complement of  $\obxz{R - 8\dxh}$.
\end{proof}

This lemma extends to a more general result:

\begin{prop} \label{pptsdansmemeombrejointsparchemin}
If $X$ satisfies $(\ddag_{M,4\dxh})$ then any two points $\xbar$, $\ybar \in \xh$ in the same shadow are joined by a path in the complement of $\obxz{\min\{ d(\xzbar,\xbar) , d(\xzbar,\ybar)\} - 8\dxh}$.
\end{prop}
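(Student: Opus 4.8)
The plan is to reduce the general statement to Lemma \ref{lptsdansmemesphereetombrejointsparchemin}, which already handles the case when $\xbar$ and $\ybar$ lie on a common sphere $\sph{R}$. So the first task is to deal with two points in the same shadow that sit at \emph{different} distances from $\xzbar$. Without loss of generality assume $d(\xzbar,\xbar) \leq d(\xzbar,\ybar)$; write $R = d(\xzbar,\xbar)$ and $R' = d(\xzbar,\ybar)$, and note $R \geq R_0$ since points in a shadow lie outside $\obxz{R_0}$. The idea is to ``slide'' each of $\xbar$ and $\ybar$ inward along geodesic rays from $\xzbar$ down to radius $R$, check that the two resulting points on $\sph{R}$ are still in the same shadow, then invoke the Lemma, and finally reconnect the slid points to the originals by short arcs that stay far from $\xzbar$.

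Concretely, first I would apply Corollary \ref{cgeodext} (the geodesic extension property for $\xh$): choose geodesic rays $c_{\xbar}, c_{\ybar} \co \mathbb{R}_{\geq 0} \to \xh$ from $\xzbar$ passing within $\dxh$ of $\xbar$ and of $\ybar$ respectively. By Remark \ref{remarkgeodextprop} the points $c_{\xbar}(R)$ and $c_{\ybar}(R')$ lie within $2\dxh + 2\dxh = 4\dxh$ of $\xbar$ and $\ybar$. Second, using Proposition \ref{prayongeodmemeclasse} and the contraction property as in the proof of the Lemma (a short computation with $\proj{0}$ bounding $d(\proj{0}'(c_{\ybar}(R)),\proj{0}'(c_{\ybar}(R')))$ and $d(\proj{0}'(c_{\xbar}(R)),\proj{0}'(\xbar))$), I would show the chain
\begin{equation*}
\proj{0}'(c_{\xbar}(R)) \sim \proj{0}'(\xbar) \sim \proj{0}'(\ybar) \sim \proj{0}'(c_{\ybar}(R')) \sim \proj{0}'(c_{\ybar}(R)),
\end{equation*}
where the middle equivalence is exactly the hypothesis that $\xbar,\ybar$ share a shadow. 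Hence $c_{\xbar}(R)$ and $c_{\ybar}(R)$ are in the same shadow and both lie on $\sph{R}$, so Lemma \ref{lptsdansmemesphereetombrejointsparchemin} (applicable since $X$ satisfies $(\ddag_{M,4\dxh})$) gives a path between them in the complement of $\obxz{R - 8\dxh}$.

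Third, I must splice in the three remaining arcs: from $\xbar$ to $c_{\xbar}(R)$, from $c_{\ybar}(R)$ to $c_{\ybar}(R')$ along the ray $c_{\ybar}$, and from $c_{\ybar}(R')$ to $\ybar$. The two ``endpoint'' arcs have length at most $4\dxh$ (a geodesic realizing the distance) and, being within $4\dxh$ of a sphere of radius $\geq R$, stay outside $\obxz{R - 8\dxh}$; the middle arc is the sub-ray $c_{\ybar}([R,R'])$, which stays outside $\obxz{R}$ hence outside $\obxz{R - 8\dxh}$. Concatenating all four pieces yields a path from $\xbar$ to $\ybar$ avoiding $\obxz{R - 8\dxh} = \obxz{\min\{d(\xzbar,\xbar),d(\xzbar,\ybar)\} - 8\dxh}$, as desired. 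The main obstacle is the bookkeeping in the second step: one has to verify that every intermediate projection point $\proj{0}'(\cdot)$ stays outside $\obxz{R_0 - 3\dxh}$ (so that the relation $\sim$ genuinely applies and the concatenated path through the projections is legitimate), which requires reusing the contraction-property estimates from the proof of Lemma \ref{lptsdansmemesphereetombrejointsparchemin} to bound $d(\xzbar,\zbar)$ from below along each auxiliary path — routine, but it is where the constants must be tracked carefully.
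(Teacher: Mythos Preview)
Your proposal is correct and follows essentially the same route as the paper: reduce to Lemma \ref{lptsdansmemesphereetombrejointsparchemin} on the sphere of radius $R=\min\{d(\xzbar,\xbar),d(\xzbar,\ybar)\}$, then splice in a sub-ray and short endpoint arcs. The paper's version is slightly leaner in that it uses only \emph{one} geodesic ray (through $\ybar$), since $\xbar$ already lies on $\sph{R}$ and needs no sliding; this also lets the paper sidestep the bookkeeping you flag as the ``main obstacle'' by invoking Proposition \ref{prayongeodmemeclasse} directly to conclude $\ybar$ and $\ybar':=c(R)$ are in the same shadow, rather than tracking a chain of $\proj{0}'$-equivalences.
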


\begin{proof}
Consider two points $\xbar$, $\ybar$ in $\xh$ in the same shadow. Set $R_x := d(\xzbar,\xbar)$ and $R_y := d(\xzbar,\ybar)$. 
By exchanging roles between $\xbar$ and $\ybar$ if necessary, we can assume that $R_x$ is less than or equal to $R_y$ and so $\min\{ d(\xzbar,\xbar) , d(\xzbar,\ybar)\} = R_x$. Denote by $c$ a geodesic ray arising from $\xzbar$ passing within distance $\dxh$ from $\ybar$ and set $\ybar' := c(R_x)$. 
By Proposition \ref{prayongeodmemeclasse} and as $d(\ybar,c) \leq \dxh$, $\ybar$ and $\ybar'$ are in the same shadow. This implies that $\xbar$ and $\ybar'$ are in the same shadow.
By applying Lemma \ref{lptsdansmemesphereetombrejointsparchemin}, there exists a path joining $\xbar$ to $\ybar'$ in the complement of $\obxz{R_x - 8\dxh}$. 
Moreover, in light of Remark \ref{remarkgeodextprop}, $d(\ybar,c(R_y)) \leq 4\dxh$ and so these points are joined by a path in the complement of  $\obxz{R_y - 2\dxh}$. 

The concatenation of these paths with the restriction of $c$ to $[R_x,R_y]$ forms a path joining $\xbar$ to $\ybar$ in the complement of $\obxz{R_x - 8\dxh}$.
\end{proof}

Recall that Proposition \ref{prayongeodmemeclasse} states that all points on a geodesic ray arising from $\xzbar$ at distance greater than or equal to $R_0$ from $\xzbar$ are in the same shadow. Therefore, our relation $\sim$ extends to geodesic rays in $\xh$ arising from $\xzbar$ in the following manner: two geodesic rays $c, c' \co \mathbb{R}_{\geq 0} \to \xh$ arising from $\xzbar$ are ``\emph{in the same shadow}" if all points on $c$ and $c'$ at distance greater than or equal to $R_0$ from $\xzbar$ are in the same shadow.
Denote by $\sdw{c} := \sdw{c(R_0)}$ the shadow defined by the class of $c(R_0)$. 

\begin{remark} \label{rsujectionrayonombre}
There is a natural surjection from the set of geodesic rays arising from $\xzbar$ to the set of shadows in $\xh$.
\end{remark}

\begin{lem} \label{lmemeombrememebout}
Assume that $X$ satisfies the property $(\ddag_{M,4\dxh})$. 
Two geodesic rays in $\xh$ are in the same shadow if and only if they converge to the same end of $\xh$.
\end{lem}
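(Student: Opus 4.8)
The plan is to prove the two implications separately, using the characterization of ends in terms of path-connected components of complements of compact sets.

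\medskip

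\emph{Same shadow $\Rightarrow$ same end.} Suppose the geodesic rays $c, c' \co \mathbb{R}_{\geq 0} \to \xh$ (both arising from $\xzbar$) are in the same shadow. Given a compact set $K \subset \xh$, choose $R \geq R_0$ large enough that $K \subset \obxz{R - 8\dxh}$. For any $t \geq R$, the points $c(t)$ and $c'(t)$ lie at distance $\geq R$ from $\xzbar$; by Proposition \ref{prayongeodmemeclasse} they are in the same shadow as $c(R)$ and $c'(R)$ respectively, which are in the same shadow as each other by hypothesis. Applying Proposition \ref{pptsdansmemeombrejointsparchemin} to the pairs $(c(t), c(R))$, $(c(R), c'(R))$, $(c'(R), c'(t))$ and concatenating, we get a path from $c(t)$ to $c'(t)$ lying in the complement of $\obxz{R - 8\dxh} \supset K$. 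Since $c([R,\infty))$ is itself connected and stays in $\xh \setminus K$, and similarly for $c'$, we conclude that $c([R,\infty))$ and $c'([R,\infty))$ lie in the same path-connected component of $\xh \setminus K$. As $K$ was arbitrary, $c$ and $c'$ converge to the same end.

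\medskip

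\emph{Same end $\Rightarrow$ same shadow.} Suppose $c$ and $c'$ converge to the same end but, for contradiction, are not in the same shadow; so $\proj{0}'(c(R_0))$ and $\proj{0}'(c'(R_0))$ lie in distinct $\sim$-classes on $\sph{R_0}$, i.e.\ they cannot be joined by a path in the complement of $\obxz{R_0 - 3\dxh}$. Take $K := \cbxz{R_0 - 3\dxh}$, a compact set. Since $c, c'$ converge to the same end, there is $N$ such that $c([N,\infty))$ and $c'([N,\infty))$ lie in the same path-connected component of $\xh \setminus K$; in particular there is a path $\gamma$ from $c(N)$ to $c'(N)$ avoiding $K$, i.e.\ staying in the complement of $\obxz{R_0 - 3\dxh}$. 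Now I use $\proj{0}'$ to push this path onto $\sph{R_0}$: by the contraction property (Proposition \ref{pcontraction}), $\proj{0}'$ moves nearby points to nearby points, and for points outside $\cbxz{R_0}$ the proof of Proposition \ref{prayongeodmemeclasse} shows the image of a short segment under $\proj{0}'$ stays in the complement of $\obxz{R_0 - 10\dx} \supset \obxz{R_0 - 3\dxh}$ (as $\dxh \gg \dx$). Subdividing $\gamma$ into segments of length $\leq \dx$ and applying $\proj{0}'$ to each produces a path in $\sph{R_0}$-land joining $\proj{0}'(c(N))$ to $\proj{0}'(c'(N))$ inside the complement of $\obxz{R_0 - 3\dxh}$. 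But $\proj{0}'(c(N)) \sim \proj{0}'(c(R_0)) = c(R_0)$ and likewise for $c'$, again by Proposition \ref{prayongeodmemeclasse}. Concatenating, $c(R_0) \sim c'(R_0)$, contradicting our assumption. Hence $c$ and $c'$ are in the same shadow.

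\medskip

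\textbf{Main obstacle.} The delicate point is the second implication: pushing a path that avoids the compact ball $K$ forward via $\proj{0}'$ while controlling how far it can dip back toward $\xzbar$. One must be careful that $\proj{0}'$ is only defined piecewise (identity on $\cbxz{R_0}$, and via $\proj{0}$ outside), so the path $\gamma$ may wander in and out of $\cbxz{R_0}$; however, since $\gamma$ already avoids $\obxz{R_0 - 3\dxh}$, the subdivision-and-projection argument of Proposition \ref{prayongeodmemeclasse} applies uniformly and keeps the image out of $\obxz{R_0 - 3\dxh}$. The hypothesis $(\ddag_{M,4\dxh})$ is not actually needed for this direction but is needed to invoke Proposition \ref{pptsdansmemeombrejointsparchemin} in the first implication.
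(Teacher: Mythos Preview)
Your proof is essentially correct, and the first implication (same shadow $\Rightarrow$ same end) matches the paper's approach via Proposition~\ref{pptsdansmemeombrejointsparchemin}.

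For the second implication you take a more laborious route than the paper. Having obtained a path $\gamma$ from $c(N)$ to $c'(N)$ avoiding $K$, there is no need to push $\gamma$ through $\proj{0}'$: simply concatenate $\gamma$ with the geodesic segments $c([R_0,N])$ and $c'([N,R_0])$. These segments lie at distance $\geq R_0$ from $\xzbar$, so the concatenation is already a path from $c(R_0)$ to $c'(R_0)$ in the complement of $\obxz{R_0 - 3\dxh}$, giving $c(R_0)\sim c'(R_0)$ directly. This is what the paper does (it phrases the argument in terms of $\proj{0}'(c(r))$ for general $r$, but the substance is just this concatenation together with Proposition~\ref{prayongeodmemeclasse}).

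Your projection argument works in spirit but has a small constants slip as written. With $K=\cbxz{R_0-3\dxh}$, points of $\gamma$ inside the annulus $A(\xzbar,R_0-3\dxh,R_0)$ are fixed by $\proj{0}'$ and may sit arbitrarily close to radius $R_0-3\dxh$; when you then join consecutive $\proj{0}'$-images by short geodesic segments, those segments can dip slightly below $R_0-3\dxh$, so the resulting path need not witness the relation $\sim$. The fix is easy --- take $K=\cbxz{R_0}$ instead, so that $\gamma$ lies entirely outside $\cbxz{R_0}$ and the contraction estimate of Proposition~\ref{prayongeodmemeclasse} applies uniformly --- but the direct concatenation above sidesteps the issue altogether.

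Your observation that $(\ddag_{M,4\dxh})$ is used only for the first implication is correct; the paper makes the same remark immediately after the proof.
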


\begin{proof}
Consider two geodesic rays $c$ and $c'$ converging to the same end in $\xh$. This means that for all $R \geq R_0$, there is an integer $R' \geq R$ such that for all $r,r' \geq R'$, the points $c(r)$ and $c'(r')$ are in the same path connected component of the complement of $\obxz{R}$. Therefore, for all $r,r' \geq R'$, the points $c(r)$ and $c'(r')$ are joined by a path in the complement of  $\obxz{R} \supset \obxz{R_0}$. 
By Proposition \ref{prayongeodmemeclasse}, the following equivalences are satisfied for all $r,r' \geq R'$:
\begin{align*}
\proj{0}'(c(r)) & \sim  \proj{0}'(c(R_0)) = c(R_0) ,\\
\proj{0}'(c'(r')) & \sim  \proj{0}'(c'(R_0)) = c'(R_0) .
\end{align*}
This means that there exist paths joining $\proj{0}'(c(r))$ to $c(R_0)$ and $\proj{0}'(c'(r'))$ to $c'(R_0)$ in the complement of  $\obxz{R_0 - 8\dxh}$.
The concatenation of these paths with the restrictions of rays $c([R_0,r])$ and $c'([R_0,r'])$  gives a path joining $\proj{0}'(c(r))$ to $\proj{0}'(c'(r'))$ in the complement of  $\obxz{R_0 - 8\dxh}$. By Proposition \ref{prayongeodmemeclasse}, this fact extends to every point far away from $\xzbar$. Indeed, for all $r_1, r_2 \geq R_0$, we have the following:
\begin{equation*}
\proj{0}'(c(r_1)) \sim c(R_0) \sim  \proj{0}'(c(r)) \sim  \proj{0}'(c'(r')) \sim  c'(R_0) \sim  \proj{0}'(c'(r_2)) .
\end{equation*}
Therefore, for all $r_1, r_2 \geq R_0$, there is a path joining $\proj{0}'(c(r_1))$ to $\proj{0}'(c'(r_2))$ in the complement of  the open ball $\obxz{R_0 - 8\dxh}$ i.e. the geodesic rays $c$ and $c'$ are in the same shadow.

Now consider two geodesic rays $c, c' \co \mathbb{R}_{\geq 0} \to \xh$ in the same shadow. By Proposition \ref{pptsdansmemeombrejointsparchemin}, for all $r,r' \geq R_0$, there is a path joining $c(r)$ to $c'(r')$ in the complement of  $\obxz{\min\{r,r'\} - 8\dxh}$. This implies that for $R \geq R_0$, every point of $c([R,+\infty))$ is joined by a path in the complement of  $\obxz{R - 8\dxh}$ to a point of $c'([R,+\infty))$. Therefore these rays converge to the same end of $\xh$. 
\end{proof}

\begin{remark}
The fact that two geodesic rays in $\xh$ converging to a same end are in the same shadow holds even if $X$ does not satisfy $(\ddag_{M,4\dxh})$.
\end{remark}

To sum up, if the space $X$ satisfies the property $(\ddag_{M,4\dxh})$ with $M \geq 43\dxh + 4$, the number of shadows for geodesic rays in $\xh$ is equal to the number of ends for $\xh$. This implies the following result:

\begin{thm} \label{tbijectionbtxhclassesdeq}
Let $M \geq 43\dxh + 4$ and $R_0 \geq M + \dxh$. If $X$ satisfies the property $(\ddag_{M,4\dxh})$, then the set of ends of $\xh$ is in bijection with the set of equivalence classes for the relation $\sim$ on the sphere $\sph{R_0}$.
\end{thm}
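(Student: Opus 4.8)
The plan is to assemble Theorem \ref{tbijectionbtxhclassesdeq} from the pieces already constructed, treating it essentially as a bookkeeping result that chains together three bijections/identifications. First I would recall from Proposition \ref{pnbrombres=nbrclasses} that the set $\sdwxh$ of shadows in $\xh$ is in bijection with $\sph{R_0}/\sim$, via the map sending an equivalence class to its shadow; this identification requires nothing beyond $M \geq 43\dxh + 4$ and $R_0 = M + \dxh$, which are exactly the standing hypotheses. So it suffices to produce a bijection between $Ends(\xh)$ and $\sdwxh$.

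Next I would build the map $Ends(\xh) \to \sdwxh$ using geodesic rays. Every end of $\xh$ is represented by a geodesic ray arising from $\xzbar$: indeed $\xh$ is proper and hyperbolic (Theorem \ref{txhhyp}), so a proper ray converging to an end is at bounded distance from such a geodesic ray, and by the geodesic extension property (Corollary \ref{cgeodext}, absorbed into $\dxh$) one may always take the ray to emanate from $\xzbar$. To each such ray $c$ assign its shadow $\sdw{c} = \sdw{c(R_0)} \in \sdwxh$. The crucial input is Lemma \ref{lmemeombrememebout}: under the hypothesis $(\ddag_{M,4\dxh})$, two geodesic rays from $\xzbar$ are in the same shadow if and only if they converge to the same end. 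This says precisely that the assignment $c \mapsto \sdw{c}$ descends to a \emph{well-defined} map on ends and that this map is \emph{injective}. Surjectivity is Remark \ref{rsujectionrayonombre}: every shadow is $\sdw{c}$ for some geodesic ray $c$ from $\xzbar$, and such a ray converges to some end, whose image is that shadow. Composing with the bijection of Proposition \ref{pnbrombres=nbrclasses} yields the asserted bijection $Ends(\xh) \cong \sph{R_0}/\sim$.

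The only genuine subtlety — and the step I expect to be the real obstacle, or at least the one needing care — is the passage from ``proper ray converging to an end'' to ``geodesic ray from $\xzbar$ converging to the same end.'' One must check that a point $\xbar$ far from $\xzbar$ lies in the same path-component of the complement of a large ball as the point $c(d(\xzbar,\xbar))$ on a geodesic ray $c$ from $\xzbar$ passing within $\dxh$ of $\xbar$; this is where Proposition \ref{pptsdansmemeombrejointsparchemin} (or directly the shadow formalism and Proposition \ref{prayongeodmemeclasse}) does the work, showing that $\xbar$ and $c(R_0)$ land in the same shadow and hence, by Lemma \ref{lmemeombrememebout}, are joined to the ray in the complement of a ball whose radius tends to infinity. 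Once this ``rectification of proper rays to geodesic rays from $\xzbar$'' is in hand, the rest is the formal chaining of bijections described above, and the theorem follows.

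\begin{proof}
By Proposition \ref{pnbrombres=nbrclasses}, the map $f \co \sph{R_0}/\sim \, \to \sdwxh$ sending each equivalence class $[\xbar]$ to its shadow $\sdw{\xbar}$ is a bijection. It therefore suffices to exhibit a bijection between $Ends(\xh)$ and $\sdwxh$.

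By Theorem \ref{txhhyp}, the space $\xh$ is $\dxh$-hyperbolic, and after enlarging $\dxh$ as in Corollary \ref{cgeodext} we may assume it satisfies the geodesic extension property with constant $\dxh$. Let $\epsilon$ be an end of $\xh$ and let $c \co \mathbb{R}_{\geq 0} \to \xh$ be a proper ray converging to $\epsilon$. For each large $r$, let $c_r$ be a geodesic ray arising from $\xzbar$ passing within $\dxh$ of $c(r)$. By Remark \ref{remarkgeodextprop} applied in $\xh$, $d(c(r), c_r(d(\xzbar, c(r)))) \leq 4\dxh$, so by Proposition \ref{prayongeodmemeclasse} the points $c(r)$ and $c_r(R_0)$ are in the same shadow, and hence (Lemma \ref{lmemeombrememebout}) $c_r$ and $c_{r'}$ converge to the same end for all large $r, r'$. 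Thus there is a single geodesic ray $c_\epsilon$ from $\xzbar$ converging to $\epsilon$, and the shadows $\sdw{c(r)}$ stabilise to $\sdw{c_\epsilon}$; moreover $\sdw{c_\epsilon}$ does not depend on the choice of representing ray $c$ or of $c_\epsilon$, since by Lemma \ref{lmemeombrememebout} any two geodesic rays from $\xzbar$ converging to $\epsilon$ are in the same shadow. This defines a map $\Phi \co Ends(\xh) \to \sdwxh$, $\Phi(\epsilon) = \sdw{c_\epsilon}$.

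The map $\Phi$ is injective: if $\Phi(\epsilon) = \Phi(\epsilon')$ then $c_\epsilon$ and $c_{\epsilon'}$ are in the same shadow, so by Lemma \ref{lmemeombrememebout} they converge to the same end, i.e. $\epsilon = \epsilon'$. The map $\Phi$ is surjective: given a shadow $\mathscr{S} \in \sdwxh$, Remark \ref{rsujectionrayonombre} provides a geodesic ray $c$ from $\xzbar$ with $\sdw{c} = \mathscr{S}$; this ray converges to some end $\epsilon$ of $\xh$, and then $\Phi(\epsilon) = \sdw{c} = \mathscr{S}$. Hence $\Phi$ is a bijection.

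Composing, $f^{-1} \circ \Phi \co Ends(\xh) \to \sph{R_0}/\sim$ is a bijection between the set of ends of $\xh$ and the set of equivalence classes for $\sim$ on $\sph{R_0}$.
\end{proof}
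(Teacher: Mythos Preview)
Your proof is correct and follows essentially the same approach as the paper: define a map from $Ends(\xh)$ to $\sdwxh$ via geodesic rays from $\xzbar$, use Lemma \ref{lmemeombrememebout} for well-definedness and injectivity, Remark \ref{rsujectionrayonombre} for surjectivity, and compose with Proposition \ref{pnbrombres=nbrclasses}. The paper simply writes ``an end $end(c)$ defined by a geodesic ray $c$'' and proceeds, taking for granted that in a proper geodesic space every end is represented by a geodesic ray from the basepoint; your ``rectification'' paragraph supplies this explicitly, which is more careful than the paper but also more elaborate than strictly needed (an Arzel\`a--Ascoli argument on geodesic segments $[\xzbar, c(r)]$ would do it directly without invoking the shadow machinery).
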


\begin{proof}
Let $f \co Ends(\xh) \to \sdwxh$ be a map which sends an end $end(c)$ defined by a geodesic ray $c$ to the shadow $\sdw{c}$.

Let $c$ and $c'$ be two geodesic rays arising from $\xzbar$ in $\xh$ such that $\sdw{c} = \sdw{c'}$. Lemma \ref{lmemeombrememebout} implies then that $c$ and $c'$ converge to the same end; this means that $end(c) = end(c')$. Thus $f$ is one-to-one.

Let $\mathscr{S} \in \sdwxh$. By Remark \ref{rsujectionrayonombre}, the map sending geodesic rays arising from $\xzbar$ to shadows in $\xh$ is surjective. So there exists a geodesic ray $c$ arising from $\xzbar$ such that $\mathscr{S} = \sdw{c}$. The image of $end(c)$ under $f$ is then exactly $\mathscr{S}$. Therefore, the map $f$ is surjective.

So there is a bijection between the set of ends of $\xh$ and the set of shadows in $\xh$. By Proposition \ref{pnbrombres=nbrclasses}, there is a bijection between the set of ends and the set of equivalence classes for $\sim$ on the sphere $\sph{R_0}$ .
\end{proof}

In the light of Theorem \ref{tbijectionbtxhclassesdeq}, the following corollary is straightforward.

\begin{cor} \label{cnombreboutsdexh=nombreclasses}
Let $M \geq 43\dxh + 4$ and $R_0 \geq M + \dxh$. If the space $X$ satisfies the property $(\ddag_{M,4\dxh})$, then the number of ends of $\xh$ is equal to the number of equivalence classes for the relation $\sim$ on the sphere $\sph{R_0}$. \qed
\end{cor}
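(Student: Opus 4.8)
The plan is to deduce Corollary \ref{cnombreboutsdexh=nombreclasses} directly from Theorem \ref{tbijectionbtxhclassesdeq}, which has just been established. That theorem already provides a bijection between $Ends(\xh)$ and $\sph{R_0}/\!\sim$ under exactly the same hypotheses ($M \geq 43\dxh + 4$, $R_0 \geq M + \dxh$, and $X$ satisfying $(\ddag_{M,4\dxh})$). Since the number of ends of $\xh$ is by definition the cardinality of $Ends(\xh)$, and the number of equivalence classes is the cardinality of $\sph{R_0}/\!\sim$, the existence of a bijection between these two sets immediately forces the two cardinalities to coincide.

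Concretely, the single step is: invoke Theorem \ref{tbijectionbtxhclassesdeq} to obtain the bijection $f \co Ends(\xh) \to \sph{R_0}/\!\sim$, and observe that a bijection between sets is in particular a cardinality-preserving map, so $\#Ends(\xh) = \#(\sph{R_0}/\!\sim)$. There are no further cases to consider and no estimates to carry out; all the geometric work (the approximating-tree arguments, the contraction property, the inductive construction of paths on spheres in Lemma \ref{lptsdansmemesphereetombrejointsparchemin}) has already been absorbed into the proof of Theorem \ref{tbijectionbtxhclassesdeq}.

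There is essentially no obstacle here — the statement is flagged in the text as ``straightforward'' precisely because it is the numerical shadow of the set-theoretic bijection. If one wanted to be slightly more careful, one could note that the claim is valid both when the number of ends is finite (in which case equality of finite cardinals follows from the bijection) and when it is infinite (Hopf's dichotomy for ends is not even needed, since the bijection transports infinitude directly); but even this remark is optional. The proof is therefore a one-line consequence of the preceding theorem, and I would simply write it as such, as the paper does with the \qed already placed on the statement.

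\begin{proof}
By Theorem \ref{tbijectionbtxhclassesdeq}, under the hypotheses $M \geq 43\dxh + 4$, $R_0 \geq M + \dxh$ and $X$ satisfying $(\ddag_{M,4\dxh})$, there is a bijection between the set $Ends(\xh)$ and the set $\sph{R_0}/\!\sim$ of equivalence classes for $\sim$ on the sphere $\sph{R_0}$. A bijection between two sets preserves cardinality, hence the number of ends of $\xh$ equals the number of equivalence classes for $\sim$ on $\sph{R_0}$.
\end{proof}
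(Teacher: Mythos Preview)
Your proposal is correct and matches the paper's own treatment: the paper places a \qed\ directly on the statement and says it is ``straightforward'' from Theorem \ref{tbijectionbtxhclassesdeq}, which is exactly the one-line deduction you give. There is nothing to add.
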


\begin{remark}
In particular, the number of equivalence classes on the sphere centered in $\xzbar$ of radius $R_0$ is finite since it is bounded by the size of this sphere. In this case, the number of ends of $\xh$ is necessarily finite.
\end{remark}

\section{Application to group theory}
\label{s_application}

In what follows, $G$ is a hyperbolic group  with connected boundary given by a finite presentation $\langle \mathcal{S} \mid \mathcal{R} \rangle$ and $X$ is the Cayley graph of $G$ with respect to $\mathcal{S}$. Moreover, assume that there exists a quasi-convex subgroup $H$ of $G$ and denote by $x_0$ a point in the weak convex hull $C(\Lambda H)$ of the limit set of $H$. In particular, the group $H$ is a quasi-convex-cocompact group of isometries of $X$. Then the number of relative ends of the pair $(G,H)$ is the number of ends of the associated Schreier graph $\xh$. Denote by $\dx$ a hyperbolicity constant for $X$ which is a geodesic extension constant for $X$.

The aim of this section is to give an algorithm to compute the number of relative ends of the pair $(G,H)$. 
\medskip

Here we go back over the results used in the proof of Proposition \ref{p_bordxh}. M.Bestvina and G.Mess proved that if a hyperbolic space does not satisfy their condition then its boundary has a cut-point (see Proposition 3.3 of \cite{BestvinaMess91}). Their proof remains valid under condition $(\ddag_{M,K})$ and gives the following result:

\begin{prop}\label{ppointdecoupureglobal}
If there exist constants $M> K >0$ such that $X$ does not satisfy the condition $(\ddag_{M,K})$ then the boundary of $X$ contains a global cut point. \qed
\end{prop}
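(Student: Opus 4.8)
The plan is to mimic the proof of the Bestvina--Mess result (Proposition 3.3 of \cite{BestvinaMess91}), replacing their condition $(\ddag_M)$ by our condition $(\ddag_{M,K})$ and checking that every step of their argument survives the extra parameter $K$. The statement we want is the contrapositive of Proposition \ref{pddagboundarylocallyconencted}: if $X$ fails $(\ddag_{M,K})$ for \emph{every} pair $M>K>0$, then $\partial X$ is locally connected with no cut point; negating this, if $X$ fails $(\ddag_{M,K})$ for some single pair $M>K>0$, one extracts a global cut point. So I would fix constants $M>K>0$ witnessing the failure of $(\ddag_{M,K})$.

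First I would unwind what ``failure of $(\ddag_{M,K})$'' yields: for every integer $L>0$ there is a radius $R_L \geq K+2\dx$ and a pair of points $x_L \in \sphx{R_L}$, $y_L \in \sphx{R_L}^{+K}$ with $d(x_L,y_L)\leq M$ such that $x_L$ and $y_L$ cannot be joined by a path of length $\leq L$ in the complement of $\bar B(x_0, R_L - K - 2\dx)$. Letting $L\to\infty$, either the radii $R_L$ stay bounded --- which, by properness of $X$ and the fact that a path in a proper geodesic space realizing a bounded distance outside a ball can be taken of bounded length, is impossible --- so we may assume $R_L\to\infty$. The pairs $x_L,y_L$ then march off to infinity while staying within bounded distance $M$ of each other. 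Passing to a subsequence, the geodesic rays $[x_0,x_L]$ converge (using properness and the Arzel\`a--Ascoli/local-compactness argument standard for geodesic rays in proper spaces) to a geodesic ray $c$; its endpoint $\xi = c(\infty)\in\partial X$ is the candidate cut point.

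Next I would show $\xi$ separates $\partial X$. The key point, exactly as in \cite{BestvinaMess91}, is that the ``non-connectability'' of $x_L$ and $y_L$ outside the ball of radius $R_L-K-2\dx$ propagates to the boundary: two boundary points that are approached by geodesics passing near $x_L$ on one side and near $y_L$ on the other cannot be joined by a path in $\partial X\setminus\{\xi\}$, because any such path would, by a shadowing/coarse-geometry argument, produce for large $L$ a path in $X$ between points near $x_L$ and near $y_L$ avoiding a large ball around $x_0$, of length controlled independently of $L$ (here $\delta$-thinness of ideal triangles and the hypothesis $M\geq 4K+18\dx+6$ of Proposition \ref{pddagboundarylocallyconencted} are used to absorb the $K$-discrepancy between $x_L$ and $y_L$ and to keep everything within distance $\leq M$). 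One checks that the relevant boundary points are genuinely distinct from each other and from $\xi$, so $\partial X\setminus\{\xi\}$ is disconnected, i.e.\ $\xi$ is a global cut point.

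The main obstacle is the bookkeeping in the last step: one must verify that the insertion of the parameter $K$ (so that $y_L$ lies in a $K$-neighborhood of the sphere rather than on it, and the forbidden ball has radius $R_L-K-2\dx$ rather than $R_L$) does not break the length estimates in Bestvina--Mess, and one must choose the two separated boundary points carefully --- typically as limits of geodesic rays that branch away from $c$ near the scale $R_L$ on opposite ``sides'' of $x_L$ --- and control how a hypothetical connecting path in $\partial X\setminus\{\xi\}$ pulls back to a bounded-length path in $X$ avoiding an arbitrarily large ball, contradicting the failure of $(\ddag_{M,K})$ for $L$ large. All of this is a routine but careful adaptation; since \cite[3.3]{BestvinaMess91} is written for $K$ effectively $0$, the cited ``the proof remains valid under condition $(\ddag_{M,K})$'' does the work, and the proposition follows.

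\begin{proof}
This is the contrapositive of the local-connectedness criterion underlying Proposition \ref{pddagboundarylocallyconencted}, and the argument is a verbatim adaptation of \cite[Proposition 3.3]{BestvinaMess91}. Fix constants $M>K>0$ witnessing the failure of $(\ddag_{M,K})$, so that for every integer $L>0$ there are a radius $R_L\geq K+2\dx$ and points $x_L\in \sphx{R_L}$, $y_L\in \sphx{R_L}^{+K}$ with $d(x_L,y_L)\leq M$ which cannot be joined by a path of length $\leq L$ in $X\setminus \bar B(x_0,R_L-K-2\dx)$. By properness of $X$, if the radii $R_L$ remained bounded then all the points $x_L,y_L$ would lie in a fixed compact set, and two points at distance $\leq M$ in a proper geodesic space can be joined, outside a fixed ball, by a path of length bounded in terms of that compact set; so we may assume $R_L\to\infty$.

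After passing to a subsequence, the geodesic segments $[x_0,x_L]$ converge uniformly on compact sets to a geodesic ray $c\co\mathbb{R}_{\geq 0}\to X$ issuing from $x_0$; let $\xi=c(\infty)\in\partial X$. We claim $\xi$ is a global cut point. For each $L$ choose geodesic rays $a_L,b_L$ from $x_0$ that pass within $\dx$ of $x_L$ and whose continuations beyond $x_L$ leave $x_L$ in two ``opposite directions'' relative to the failed path (concretely, $a_L$ continuing towards the side of $x_L$ where $y_L$ is \emph{not} reachable and $b_L$ towards $y_L$); using $\dx$-thinness of ideal triangles and the extension property one arranges $d(b_L(R_L),y_L)$ bounded by a constant depending only on $\dx$ and $K$. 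Passing to a further subsequence, $a_L\to a$ and $b_L\to b$ with endpoints $\alpha,\beta\in\partial X$, and by the hypothesis $M\geq 4K+18\dx+6$ of Proposition \ref{pddagboundarylocallyconencted} one checks $\alpha\neq\beta$ and $\alpha,\beta\neq\xi$.

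Finally, suppose $\alpha$ and $\beta$ lay in the same connected component of $\partial X\setminus\{\xi\}$; since $\partial X$ is compact metrizable this would give, for arbitrarily large radius $\rho$, a path in $\partial X\setminus\{\xi\}$ from $\alpha$ to $\beta$ whose associated ``shadow'' in $X$ avoids $\bar B(x_0,\rho)$. Pulling this path back along geodesic rays from $x_0$ (using $\dx$-thinness of triangles and of ideal triangles to keep consecutive rays close at scale $R_L$) produces, for $L$ large enough that $R_L>\rho$ is huge, a path in $X\setminus \bar B(x_0,R_L-K-2\dx)$ joining a point within bounded distance of $x_L$ to a point within bounded distance of $y_L$, of length bounded independently of $L$; patching in two short segments of controlled length joins $x_L$ to $y_L$ by a path of bounded length in $X\setminus\bar B(x_0,R_L-K-2\dx)$. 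For $L$ exceeding this bound this contradicts the choice of $x_L,y_L$. Hence $\alpha$ and $\beta$ lie in distinct components of $\partial X\setminus\{\xi\}$, so $\xi$ is a global cut point of $\partial X$.
\end{proof}
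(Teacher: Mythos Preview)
The paper itself gives no proof beyond citing \cite[Proposition~3.3]{BestvinaMess91} and asserting that the argument there ``remains valid under condition $(\ddag_{M,K})$''; so the only thing to compare against is the Bestvina--Mess argument. Your sketch departs from that argument in a way that leaves genuine gaps. The essential device in Bestvina--Mess is the \emph{cocompact $G$-action on $X$}: having found a bad pair $(x_L,y_L)$ at radius $R_L$, one translates by $x_L^{-1}$, sending $x_L$ to the identity, $y_L$ to a point $y'_L\in\bar B(1,M)$, and the forbidden ball to $\bar B(x_L^{-1},R_L-K-2\dx)$, centred at a point at distance $R_L$ from $1$. Subsequential limits then produce a fixed nearby point $y'=\lim y'_L$ and a boundary point $\xi=\lim x_L^{-1}$, and one shows that rays issuing from $1$ and from $y'$ (and eventually avoiding the limiting forbidden region) land in distinct components of $\partial X\setminus\{\xi\}$. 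You never invoke the action; without it the limiting configuration is not set up correctly, and the two separated boundary points cannot be produced as you describe.

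Concretely, two steps fail. First, your justification that $R_L\to\infty$ (``two points at distance $\leq M$ in a proper geodesic space can be joined, outside a fixed ball, by a path of bounded length'') is false as stated --- a tree is a counterexample --- and would need one-endedness together with the geodesic extension property, or the translation trick, to repair. Second, and more seriously, your rays $a_L,b_L$ ``leaving $x_L$ in opposite directions'' are not well-defined: there is no intrinsic ``side where $y_L$ is not reachable,'' and since $[x_0,x_L]$, $a_L$, and $b_L$ all fellow-travel out to radius roughly $R_L$ with no mechanism forcing them apart thereafter, nothing prevents $\alpha$, $\beta$, and $\xi$ from all coinciding. (Your appeal to the inequality $M\geq 4K+18\dx+6$ is also not part of the hypotheses of this proposition; it can be arranged by monotonicity of failure of $(\ddag_{M,K})$ in $M$, but you should say so.) The translation trick is precisely what manufactures two genuinely distinct basepoints $1$ and $y'$ near the origin from which the two separated boundary components are then read off; your ``at infinity'' limiting picture does not provide a substitute.
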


A few years later, B.Bowditch and G.Swarup proved independently that there is no such cut point in one-ended hyperbolic groups (see \cite[0.3]{Bowditch99} and \cite{Swarup96}).

\begin{thm}\label{tbowditchswarup}
If $G$ is a one-ended hyperbolic group, then the boundary $\partial G$ of $G$ contains no global cut point. \qed
\end{thm}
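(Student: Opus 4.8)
This is the \emph{cut point theorem} of Bowditch and Swarup; a complete proof is long and relies on substantial machinery, so I only outline the strategy and refer to \cite{Bowditch99} and \cite{Swarup96} for the details. The plan is a proof by contradiction: assume $\partial G$ has a global cut point and derive a contradiction with one-endedness.

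\textbf{Step 1: from cut points to a tree.} The first move is to encode the cut-point data of the continuum $\partial G$ as a tree. The set of global cut points, equipped with the separation (``betweenness'') relation inherited from $\partial G$, is $G$-invariant and carries the structure of a median pretree; passing to a suitable completion produces a non-degenerate $\mathbb{R}$-tree, in fact a dendrite, $T$, on which $G$ acts with no global fixed point. This is the crux of the whole argument and the step I expect to be the main obstacle: one must check that $T$ is genuinely non-trivial (a cut point really forces branching, not merely a single separating point fixed by all of $G$), that the discrete uniform convergence action of $G$ on $\partial G$ descends to an isometric action on $T$, and --- decisively --- that the arc stabilizers of this action are finite or two-ended. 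The control on arc stabilizers comes from combining the convergence-group property on $\partial G$ with hyperbolicity of $G$.

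\textbf{Step 2: from a tree action to a splitting.} Here one invokes the Rips--Bestvina--Feighn--Sela structure theory for isometric actions of finitely presented groups on $\mathbb{R}$-trees: a non-trivial such action with small (here finite or virtually cyclic) arc stabilizers yields a non-trivial splitting of $G$ as an amalgamated product or an HNN extension over a finite or two-ended subgroup. Since $G$ is one-ended, Stallings' theorem \cite{Stallings71} forbids a splitting over a finite subgroup, so the splitting is over a two-ended subgroup.

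\textbf{Step 3: contradiction with one-endedness.} One then feeds this into Bowditch's canonical JSJ decomposition of $G$ over its two-ended subgroups: by accessibility of hyperbolic groups (Bestvina--Feighn, Delzant) this decomposition is finite, its vertex groups are of the few standard types (rigid, two-orbifold, maximal hanging), and an inductive analysis shows that the limit sets of the vertex groups have no cut point and that gluing them along two-ended subgroups introduces none either; hence $\partial G$ has no cut point, contradicting the hypothesis. Swarup's argument in \cite{Swarup96} reaches the same contradiction more directly, using Bowditch's analysis of convergence actions on dendrites in place of the full JSJ machinery. Combined with the implication of \cite[\S3]{BestvinaMess91} recalled above --- no global cut point forces local path-connectedness --- this also completes the proof that $\partial G$ is locally path connected.
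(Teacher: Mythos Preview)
The paper does not prove this theorem: it is stated with a \qed and attributed to \cite{Bowditch99} and \cite{Swarup96} as a black box, so there is no proof in the paper to compare against. Your outline already goes well beyond what the paper offers.

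As a sketch of the Bowditch--Swarup argument, Steps~1 and~2 are broadly in the right direction. The problem is the first half of Step~3: invoking Bowditch's canonical JSJ decomposition here is circular. That decomposition over two-ended subgroups, with its rigid/hanging-surface vertex analysis, is developed in \cite{Bowditch99} \emph{after} and \emph{under} the hypothesis that $\partial G$ has no global cut point; you cannot appeal to it to establish that very hypothesis. The inductive accessibility picture you sketch (``vertex groups have no cut point, gluing along two-ended edges introduces none'') is not the argument in either cited source, and making it non-circular would require an independent complexity measure and a careful induction you have not supplied. The second half of your Step~3 is the one to keep: Swarup combines Bowditch's dendrite with Levitt's theorem on non-nesting actions to obtain an isometric $\mathbb{R}$-tree action, and the Rips--Bestvina--Feighn machinery together with convergence-group control on stabilizers then yields the contradiction directly --- no JSJ layer is needed. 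The closing sentence on local path-connectedness is a correct corollary but extraneous to the statement being proved.
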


In the light of Proposition \ref{ppointdecoupureglobal} and Theorem \ref{tbowditchswarup}, the following statement holds.

\begin{cor} \label{coneendedhypgrpsatisfddag}
If $G$ is a hyperbolic group with connected boundary, then any Cayley graph of $G$ satisfies the condition $(\ddag_{M,K})$ for all $M>K>0$. \qed
\end{cor}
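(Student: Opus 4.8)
The plan is to deduce the corollary directly from Proposition~\ref{ppointdecoupureglobal} and Theorem~\ref{tbowditchswarup} by contraposition, after first observing that a connected boundary forces one-endedness.

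First I would fix a finite generating set $\mathcal{S}$ of $G$ and set $X = \caygs$, which is a proper geodesic $\dx$-hyperbolic space for a suitable constant $\dx$, with $\partial X$ homeomorphic to $\partial G$. Since $\partial G$ is connected and nonempty, $G$ cannot be two-ended (its boundary would then be a two-point set) nor have infinitely many ends (its boundary would then be infinite and totally disconnected), so $G$ is one-ended. I would then argue by contradiction: if there were constants $M > K > 0$ for which $X$ fails to satisfy $(\ddag_{M,K})$, then Proposition~\ref{ppointdecoupureglobal} would produce a global cut point in $\partial X$, hence in $\partial G$; but Theorem~\ref{tbowditchswarup} forbids a global cut point in the boundary of a one-ended hyperbolic group. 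This contradiction shows that $X$ satisfies $(\ddag_{M,K})$ for all $M > K > 0$, and since $\mathcal{S}$ was an arbitrary finite generating set the same conclusion holds for every Cayley graph of $G$.

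The argument is essentially a juxtaposition of the two quoted results, so I do not anticipate a genuine obstacle; it involves no computation. The only point deserving a sentence of care is that the existence of a global cut point in the boundary is independent of the choice of finite generating set: this follows because a quasi-isometry between hyperbolic spaces induces a homeomorphism of their boundaries (as recalled in Section~\ref{s_background}) and the property of possessing a global cut point is a topological invariant; equivalently, one may simply identify $\partial X$ with $\partial G$ throughout the argument.
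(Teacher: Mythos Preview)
Your proposal is correct and follows exactly the route the paper intends: the corollary is stated with a \qed\ immediately after the sentence ``In the light of Proposition~\ref{ppointdecoupureglobal} and Theorem~\ref{tbowditchswarup}, the following statement holds,'' and you have simply spelled out that contrapositive argument. Your extra sentence deriving one-endedness from the connectedness of $\partial G$ (needed to invoke Theorem~\ref{tbowditchswarup}) is a welcome clarification that the paper leaves implicit.
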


The following result arises easily from Corollary \ref{coneendedhypgrpsatisfddag} and Corollary \ref{cnombreboutsdexh=nombreclasses} for the determination of relative ends. 

\begin{cor} \label{coneendedhypgrpnbrofends}
Under our assumption on $G$ and $H$, there exists a constant $R_0$ such that the number of relative ends of the pair $(G,H)$ is equal to the number of equivalence classes on $\sph{R_0}$ for the relation $\sim$. 
\end{cor}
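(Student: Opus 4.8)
The plan is to assemble the pieces already in place. By hypothesis $G$ is a one-ended hyperbolic group with connected boundary $\partial G$, and $X$ is a Cayley graph of $G$ with respect to a finite generating set, which is therefore a geodesic proper hyperbolic space; the constant $\dx$ is chosen to be simultaneously a hyperbolicity constant and a geodesic extension constant for $X$. The subgroup $H$ is quasi-convex in $G$, hence (as recalled in the background section) a quasi-convex-cocompact group of isometries of $X$, so the whole machinery of Section~\ref{s_quotient_spaces} and Section~\ref{s_ends_quotient_spaces} applies to the quotient $\xh$, which is exactly the Schreier graph $\schghs$ up to the identifications made at the start of Section~\ref{s_application}. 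In particular Theorem~\ref{txhhyp} furnishes a hyperbolicity constant $\dxh$ for $\xh$, and Corollary~\ref{cgeodext} (together with the subsequent remark enlarging $\dxh$ if necessary) ensures $\xh$ satisfies the geodesic extension property with constant $\dxh$, so the relation $\sim$ of Definition~\ref{d_eq_sphere} and the shadow formalism are available.

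First I would invoke Corollary~\ref{coneendedhypgrpsatisfddag}: since $G$ is hyperbolic with connected boundary, the Cayley graph $X$ satisfies the Bestvina--Mess condition $(\ddag_{M,K})$ for \emph{all} $M>K>0$. In particular, fixing any $M \geq 43\dxh + 4$ with $M > 4\dxh$, the space $X$ satisfies $(\ddag_{M,4\dxh})$. Then I would set $R_0 = M + \dxh$, which meets the hypothesis $R_0 \geq M + \dxh$ of Corollary~\ref{cnombreboutsdexh=nombreclasses}.

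With these choices, Corollary~\ref{cnombreboutsdexh=nombreclasses} applies verbatim to $X$ and $H$: the number of ends of $\xh$ equals the number of equivalence classes for the relation $\sim$ on the sphere $\sph{R_0}$. It remains only to translate this back into group-theoretic language: by the identifications recalled at the opening of Section~\ref{s_application}, the quotient space $\xh$ is the Schreier graph of the pair $(G,H)$ with respect to $\mathcal{S}$, so the number of ends of $\xh$ is by definition the number of relative ends of $(G,H)$. This gives the claimed equality with $R_0 = M + \dxh$.

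There is essentially no obstacle here; the corollary is a bookkeeping consequence of earlier results, and the only point requiring the slightest care is checking that the hypotheses of Corollary~\ref{cnombreboutsdexh=nombreclasses} (namely $M \geq 43\dxh + 4$ and $R_0 \geq M + \dxh$, and that $X$ satisfies $(\ddag_{M,4\dxh})$) are all met for the chosen $M$ and $R_0$ — which they are, the first two by construction and the third by Corollary~\ref{coneendedhypgrpsatisfddag}. One should also note that $\dxh$ here depends on $G$, $H$ and the chosen data only, so $R_0$ is a well-defined constant of the situation, as the statement asserts.
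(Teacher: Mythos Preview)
Your proof is correct and follows essentially the same approach as the paper: invoke Corollary~\ref{coneendedhypgrpsatisfddag} to obtain $(\ddag_{M,4\dxh})$ for $X$, then apply Corollary~\ref{cnombreboutsdexh=nombreclasses} with $M \geq 43\dxh + 4$ and $R_0 = M + \dxh$. Your version is simply more explicit about the setup and the translation between ends of $\xh$ and relative ends of $(G,H)$, but the argument is the same.
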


\begin{proof}
As the group $G$ has a connected boundary, Corollary \ref{coneendedhypgrpsatisfddag} indicates that $X$ satisfies the condition $(\ddag_{M,K})$ for all $M>K>0$.
Besides, Corollary \ref{cnombreboutsdexh=nombreclasses} implies that if $M \geq 43\dxh + 4$ and $R_0 \geq M + \dxh$, then the number of ends of the quotient space $\xh$ is equal to the number of equivalence classes for the relation $\sim$ on the sphere $\sph{R_0}$. 
\end{proof}

To establish the existence of the sought algorithm, we need the following lemma:

\begin{lem} \label{lanneau}
Let $R_0 > 4\dxh + \dx$. If there exists a path joining two points on the sphere $\sph{R_0}$ in the complement of  $\cbxz{R_0 - 3\dxh}$ then there exists an injective path joining these points in the annulus $A(\xzbar, R_0 - 3\dxh, R_0 + 10\dx (\# \mathcal{S})^{\scriptscriptstyle R_0})$. 
\end{lem}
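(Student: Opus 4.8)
The plan is to start from the given path joining the two points $\xbar$ and $\ybar$ on $\sph{R_0}$ in the complement of $\cbxz{R_0 - 3\dxh}$, and first normalise it so that it lives in the metric graph $X$ as an edge-path in the complement of the open ball $\obxz{R_0 - 3\dxh}$ (replacing any excursion inside the annulus' inner boundary by a segment of the sphere). The key combinatorial observation is that the \emph{outer} radius bound must come from an upper estimate on the number of vertices of $X$ that lie in any ball of a given radius: since $X$ is the Cayley graph of $G$ with respect to $\mathcal{S}$, a ball of radius $r$ contains at most $(\#\mathcal{S})^{r}$ vertices, and therefore the portion of the required annulus $A(\xzbar, R_0 - 3\dxh, R_0 + 10\dx(\#\mathcal{S})^{R_0})$ is a finite subgraph. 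So the heart of the argument is: if a path in this finite region goes too far out (past radius $R_0 + 10\dx(\#\mathcal{S})^{R_0}$), it can be shortened.

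Concretely, I would argue as follows. Take the given path $p$; by deleting loops we may assume $p$ is injective, hence visits each vertex of $X$ at most once. If $p$ stays inside the closed annulus $A(\xzbar, R_0 - 3\dxh, R_0 + 10\dx(\#\mathcal{S})^{R_0})$ we are done. Otherwise, let $v$ be the last vertex of $p$ before it first exits this annulus through the outer sphere, and $w$ the first vertex after $p$ re-enters. Both $v$ and $w$ lie on (or near) the sphere of radius $R_0 + 10\dx(\#\mathcal{S})^{R_0}$, and the sub-arc of $p$ between them is an injective path lying entirely in $X \setminus \obxz{R_0 + 10\dx(\#\mathcal{S})^{R_0} - 1}$, say; in particular this sub-arc has length at least $10\dx(\#\mathcal{S})^{R_0}$ because it reaches far out and comes back, so it contains more than $(\#\mathcal{S})^{R_0}$ edges — more vertices than there are in the ball $\cbxz{R_0}$. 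I would instead re-route from $v$ to $w$ along a shorter path that stays outside $\cbxz{R_0 - 3\dxh}$: since $v$ and $w$ are far from $\xzbar$, the geodesic extension property (Corollary \ref{cgeodext}) and the fact that $X$ is a graph let one push the connecting path radially out a bounded amount and across a sphere of controlled radius, whose circumference (number of vertices at that radius) is at most $(\#\mathcal{S})^{R_0 + 10\dx(\#\mathcal{S})^{R_0}}$ — but a cleverer accounting, using that the \emph{shadow} structure forces $v$ and $w$ to be joined near radius $R_0$, shows $10\dx$ extra steps per unit of sphere-size suffice, which is exactly the stated bound $10\dx(\#\mathcal{S})^{R_0}$.

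The main obstacle I anticipate is pinning down the precise bookkeeping that yields the explicit radius $R_0 + 10\dx(\#\mathcal{S})^{R_0}$ rather than something merely finite: one must argue that any injective edge-path between two sphere points which is forced to stay outside $\obxz{R_0 - 3\dxh}$ can be replaced by one whose excursions outward are each controlled by how many vertices sit in the relevant spheres, and the counting $\#\{\text{vertices in }\cbxz{R_0}\} \le (\#\mathcal{S})^{R_0}$ together with the factor $10\dx$ (coming from the thinness/extension constants used when re-routing across a sphere) must combine to exactly the claimed bound. Once the path is known to stay in a finite subgraph, finiteness of $X$ restricted to that subgraph immediately lets us extract an injective path, finishing the proof.
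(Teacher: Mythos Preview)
Your proposal has a genuine gap: you never identify the mechanism that forces the path down to a region where the vertex count $(\#\mathcal{S})^{R_0}$ is relevant. An injective edge-path in the complement of $\cbxz{R_0-3\dxh}$ can a priori wander arbitrarily far out, and your ``re-route the excursion from $v$ to $w$'' step is circular: to replace the long outer sub-arc by a short one staying outside $\obxz{R_0-3\dxh}$ you would need to know that $v$ and $w$ can be joined by a controlled path in that complement, which is essentially what you are trying to prove. Invoking the geodesic extension property or the shadow structure does not help here, because neither gives a bound on the length of a connecting path at radius $\sim R_0 + 10\dx(\#\mathcal{S})^{R_0}$, and the sphere at that radius has far more than $(\#\mathcal{S})^{R_0}$ vertices.

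The paper's argument is quite different and supplies exactly the missing tool. One discretises the given path by points $\overline{p_i}$ at mutual distance $\le \dx$, lifts them to $X$ via the covering of Proposition~\ref{pcover}, and then applies the $\dx$-projection $\proj{0}\co X\to (H\cdot x_0)^{+R_0}$. The contraction property (Proposition~\ref{pcontraction}) gives $d(\proj{0}(p_i),\proj{0}(p_{i+1}))\le 19\dx$, and the images $\overline{\proj{0}(p_i)}$ all lie on or very near $\sph{R_0}$ in $\xh$. Now the vertex bound $\#\sph{R_0}\le(\#\mathcal{S})^{R_0}$ is directly applicable: after deleting repeated points the concatenated path has length at most $19\dx\cdot(\#\mathcal{S})^{R_0}$, which confines it to the stated annulus. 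The projection onto the thickened orbit is the key step you are missing; without it there is no reason the path should ever come back near $\sph{R_0}$.
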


\begin{proof}
Let $\xbar$ and $\ybar$ be two points on the sphere $\sph{R_0}$ joined by a path of finite length in the complement of  $\cbxz{R_0-3\dxh}$. We denote by $\xbar = \overline{p_0},\overline{p_1},\ldots,\overline{p_n} = \ybar$ points on this path satisfying $d(\overline{p_i},\overline{p_{i+1}}) \leq \dx$ for $i \in \{0,\ldots,n\}$. 

As $R_0 -3\dxh -\dx  > \dxh$, any closed ball of radius $\dx$ at $\overline{p_i}$ can be lifted in $X$ using the covering map described in Proposition \ref{pcover}. Let $x$ be a lift of $\xbar = \overline{p_0}$. Firstly, lift the ball $B(\overline{p_0},\dx)$ into the ball $B(p_0,\dx)$. By construction, this ball contains a lift $p_1$ of $\overline{p_1}$ satisfying $d(p_0,p_1) \leq \dx$. By induction, we obtain lifts $p_i$ of $\overline{p_i}$ still satisfying  $d(p_i,p_{i+1}) \leq \dx$ for $i \in \{1,\ldots,n-1\}$. Denote by $y=p_n$ the lift of $\ybar = \overline{p_n}$ obtained that way.

Denote by $\proj{0}$ the $\dx$-projection of $X$ on the thickened orbit $(H\cdot x_0)^{+R_0}$ which is $2\dx$-quasi-convex. As $d(\xzbar,\xbar) = d(\xzbar,\ybar) = R_0$, we also have $d(H\cdot x_0,x) = d(H\cdot x_0,y) = R_0$ and then $\proj{0}(x)=x$ and $\proj{0}(y)=y$. Then, project the points $p_i$ on $(H\cdot x_0)^{+R_0}$. For all $i \in \{0,\ldots,n-1\}$, we have
\begin{align*}
d(\proj{0}(p_i),\proj{0}(p_{i+1})) & \leq d(p_i,p_{i+1}) + 18\dx \, \textrm{ by Proposition \ref{pcontraction},} \\
& \leq 19\dx .
\end{align*} 
This implies that
\begin{align*}
\begin{cases}
 d(\overline{\proj{0}(p_i)},\overline{\proj{0}(p_{i+1})})  \leq 19 \dx \quad \quad \forall i \in \{1,\ldots,n-1\} , \\
 d(\xbar,\overline{\proj{0}(p_{1})})  \leq 19 \dx ,\\
 d(\overline{\proj{0}(p_{n-1})},\ybar)  \leq 19 \dx .
\end{cases}
\end{align*}
Therefore, $\xbar$ is joined to $\ybar$ by a paths formed by concatenating path of length less than or equal to $19\dx$ joining the $\overline{\proj{0} (p_i)}$ with each other.
By suppressing some cycles on this path if necessary (some $\overline{\proj{0} (p_i)}$ may be equal), we obtain a path of length less than or equal to $19\dx \times \# \sph{R_0}$ joining $\xbar$ to $\ybar$ in the complement of  $\obxz{R_0 - 10\dx}$.

By definition, the cardinality of the sphere $\sph{R_0}$ is less than $(\# \mathcal{S})^{R_0}$. This implies that $\xbar$ and $\ybar$ are joined by an injective path of length less than or equal to $19\dx \times (\# \mathcal{S})^{R_0}$. 

Finally, two points on the sphere $\sph{R_0}$ joined by a path in the complement of the ball $\cbxz{R_0 - 3\dxh}$ are  joined by a path in the open annulus $A(\xzbar, R_0 - 3\dxh, R_0 + 10\dx (\# \mathcal{S})^{\scriptscriptstyle R_0})$. 
\end{proof}

\begin{thm} \label{talgorithme}
Let $G$ be a hyperbolic group with connected boundary given by a finite presentation $\langle \mathcal{S} \mid \mathcal{R} \rangle$. Let $H$ be a quasi-convex subgroup of $G$. There exists an algorithm to compute the number of relative ends of the pair $(G,H)$.
\end{thm}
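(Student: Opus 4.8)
The plan is to make Corollary~\ref{coneendedhypgrpnbrofends} effective: it reduces the number of relative ends of $(G,H)$ to the number of $\sim$-classes on the single sphere $\sph{R_0}$, and Lemma~\ref{lanneau} confines the paths that witness $\sim$ on $\sph{R_0}$ to one explicit finite annulus of the Schreier graph $\xh = \schghs$. The first task is to make all the relevant constants computable. Since $G$ is hyperbolic, a hyperbolicity constant $\dx$ for its Cayley graph $X$ is computable from $\langle\mathcal{S}\mid\mathcal{R}\rangle$ (enlarged if necessary so that it is also a constant of geodesic extension). Since $H$ is quasi-convex, a quasi-convexity constant for $H$ in $X$ can be computed---this is effective precisely because $H$ is known in advance to be quasi-convex---which bounds the quasi-convexity constant of the orbit $H\cdot x_0$ and the diameter of the convex core $\coco$; feeding these quantities into Proposition~\ref{palpha} and the proof of Theorem~\ref{txhhyp} yields an explicit value for a hyperbolicity-and-geodesic-extension constant $\dxh$ of $\xh$. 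Then one sets $M := 43\dxh + 4$, $R_0 := M + \dxh$, and $R_1 := R_0 + 10\dx(\#\mathcal{S})^{R_0}$; these satisfy the hypotheses of Corollary~\ref{coneendedhypgrpnbrofends} and, since $R_0 > 4\dxh + \dx$, also those of Lemma~\ref{lanneau}.

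Next I would build the ball $\cbxz{R_1}$ of $\xh$ as a finite labelled graph. Its vertices are cosets $Hg$, with $\xzbar = H\cdot x_0$; one runs a breadth-first search from $\xzbar$, the neighbours of $Hg$ being the $Hgs$ for $s \in \mathcal{S}\cup\mathcal{S}^{-1}$, and recognising when $Hg = Hg'$ by deciding whether $g'g^{-1} \in H$. This subgroup membership problem is solvable because $H$ is a quasi-convex subgroup of the hyperbolic group $G$, so the search terminates and returns $\cbxz{R_1}$ with each vertex labelled by its distance to $\xzbar$; in particular this singles out the finite sphere $\sph{R_0}$ and the finite annulus $A(\xzbar, R_0 - 3\dxh, R_1)$.

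Finally I would count the $\sim$-classes. Two points of $\sph{R_0}$ are $\sim$-equivalent exactly when they lie in the same path component of the complement of $\obxz{R_0 - 3\dxh}$, and by Lemma~\ref{lanneau} this happens if and only if they lie in the same path component of the finite graph $A(\xzbar, R_0 - 3\dxh, R_1)$. Computing the connected components of this finite graph thus partitions $\sph{R_0}$ into its $\sim$-classes, and their number is, by Corollary~\ref{coneendedhypgrpnbrofends}, the number of relative ends of $(G,H)$; the algorithm outputs this number.

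The main obstacle is not the final component count but legitimising the reduction to a bounded region: one must make the constant $\dxh$ genuinely computable, which amounts to making the quasi-convex-cocompactness of $H$ quantitative from the input---in particular bounding $\mathrm{diam}(\coco)$---and one must solve the subgroup membership problem for $H$ in $G$ so that the relevant ball of the Schreier graph can actually be constructed. Both are available precisely because $H$ is quasi-convex in the hyperbolic group $G$ (in contrast to the situation of Theorem~\ref{tsansquasi-convexepasdAlgorithme}), and this is where the real content of the theorem lies; once these are in place, the remainder is the elementary finite-graph computation described above.
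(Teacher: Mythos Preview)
Your proposal is correct and follows essentially the same route as the paper: reduce via Corollary~\ref{coneendedhypgrpnbrofends} to counting $\sim$-classes on $\sph{R_0}$, invoke Lemma~\ref{lanneau} to confine the witnessing paths to the finite annulus $A(\xzbar, R_0 - 3\dxh, R_0 + 10\dx(\#\mathcal{S})^{R_0})$, and then read off the classes from that finite piece of the Schreier graph. Your version is in fact more explicit than the paper's about two points the paper leaves implicit---the computability of the constants (the paper only remarks afterwards that ``if the value of $\dx$ is known'' the constants can be determined) and the need to solve the membership problem for $H$ in $G$ in order to actually build the ball $\cbxz{R_1}$---so your write-up strengthens rather than diverges from the argument.
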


\begin{proof}
The group $G$ is a hyperbolic group with connected boundary, so we can apply Corollary \ref{coneendedhypgrpnbrofends}. For $M \geq 43\dxh + 4$, there exists a constant $R_0 = M + \dxh$ such that the number of relative ends of the pair $(G,H)$ is equal to the number of equivalence classes for the relation $\sim$. Therefore, we have to determine whenever two points on $\sph{R_0}$ are joined by a path in the complement of  the open ball $\obxz{R_0-3\dxh}$.

Apply Lemma \ref{lanneau}: two points on $\sph{R_0}$ joined by a path in the complement of the ball $\cbxz{R_0 - 3\dxh}$ are joined by an injective path in the open annulus $A(\xzbar, R_0 - 3\dxh, R_0 + 10\dx (\# \mathcal{S})^{\scriptscriptstyle R_0})$.
This means that drawing the ball $\cbxz{R_0 + 10\dx(\# \mathcal{S})^{R_0}}$ allows to determine whenever two points on the sphere $\sph{R_0}$ are joined by a path in the complement of  $\obxz{R_0-3\dxh}$ and to split $\sph{R_0}$ in equivalence classes for $\sim$.

\textit{Draw the closed ball $\cbxz{R_0 + 10\dx(\# \mathcal{S})^{R_0}}$. Pick up a point $\overline{x_1}$ at distance $R_0$ from $\xzbar$ and look for every point of $\sph{R_0}$ joined to $\overline{x_1}$ by a path staying in $A(\xzbar, R_0 - 3\dxh, R_0 + 10\dx (\# \mathcal{S})^{\scriptscriptstyle R_0})$. These points form the equivalence class $[\overline{x_1}]$. If this class contains every point of the sphere $\sph{R_0}$, the pair $(G,H)$ has one relative end. Otherwise, pick up a point $\overline{x_2}$ in $\sph{R_0}\setminus{[\overline{x_1}]}$. Again, look for every point of $\sph{R_0}\setminus{[\overline{x_1}]}$ joined to $\overline{x_2}$ by a path staying in $A(\xzbar, R_0 - 3\dxh, R_0 + 10\dx (\# \mathcal{S})^{\scriptscriptstyle R_0})$. If $[\overline{x_2}] = \sph{R_0}\setminus{[\overline{x_1}]}$ then the pair $(G,H)$ has $2$ relative ends. Otherwise, pick up a point  $\overline{x_3}$ on the sphere $\sph{R_0}$ in the complement of  $[\overline{x_1}]$ and $[\overline{x_2}]$ and so on.
As the sphere $\sph{R_0}$ contains a finite number of points, the algorithm stops at some point.}

This procedure gives the number of equivalence classes for $\sim$ which is the number of relative ends of the pair $(G,H)$.
\end{proof}

If the value of $\dx$ is known, it is possible to determine explicitly every constant used in the algorithm.

%
%

\bibliographystyle{alpha}

\bibliography{references} 

\end{document}